\newtheorem{thm}{{\bf Theorem}}[section]
\newtheorem{lem}[thm]{{\bf Lemma}}
\newtheorem{prop}[thm]{{\bf Proposition}}
\newtheorem{rem}[thm]{Remark}
\newtheorem{ex}[thm]{Example}
\newtheorem{ques}[thm]{Question}
\newtheorem{definition}[thm]{Definition}
\numberwithin{equation}{section}
\begin{document} 

\title[Dynamics of the monodromies of the fibrations on the magic $3$-manifold]{
Dynamics of the monodromies of the fibrations on the magic $3$-manifold}

\author[E. Kin]{%
    Eiko Kin
}
\address{%
       Department of Mathematics, Graduate School of Science, Osaka University Toyonaka, Osaka 560-0043, JAPAN
}
\email{%
        kin@math.sci.osaka-u.ac.jp
}

\subjclass[2000]{%
	Primary 57M27, 37E30, Secondary 37B40
}

\keywords{%
pseudo-Anosov, dilatation, topological entropy, train track representative, 
magic manifold, branched surface
}

\date{%
	October 5, 2014
	}

\thanks{%
	The  author is partially supported by 
	Grant-in-Aid for Young Scientists (B) (No. 20740031), MEXT, Japan.  
	} 

\begin{abstract}

We study the magic manifold $N$ which is a hyperbolic and fibered $3$-manifold. 
We give an explicit construction of a fiber $F_a$ and its monodromy $:F_a \rightarrow F_a$ of the fibration 
associated to  each fibered class $a$ of $N$. 
Let $\delta_g$ (resp. $\delta_g^+$) 
be the minimal dilatation of pseudo-Anosovs (resp. pseudo-Anosovs with orientable invariant foliations) 
defined on an orientable closed surface of genus $g$. 
As a consequence of our result, we obtain the first explicit construction of the following pseudo-Anosovs; 
a minimizer of $\delta_7^+$ and conjectural minimizers of $\delta_g$ for large $g$. 
%

\end{abstract} 
\maketitle

\section{Introduction}
\label{section_Introduction}

In this paper, we explore the dynamics of  monodromies of  fibrations on a hyperbolic, fibered $3$-manifold, 
called the {\it magic $3$-manifold} $N$, 
which is the exterior of the $3$ chain link $\mathcal{C}_3$, see Figure~\ref{fig_3braid}(1). 
We first set some notations, then describe the motivation of our study. 
Let $\varSigma$ be  an orientable surface (possibly with punctures). 
A homeomorphism $\Phi: \varSigma \rightarrow \varSigma$ is {\it pseudo-Anosov} 
if there exist a pair of transverse measured foliations 
$(\mathcal{F}^u, \mu^u)$ and 
$(\mathcal{F}^s, \mu^s)$ 
and  a constant $\lambda = \lambda(\Phi)>1$ such that 
$$\Phi(\mathcal{F}^u, \mu^u) = (\mathcal{F}^u, \lambda \mu^u) \  \hspace{2mm}\mbox{and}\ \hspace{2mm}
\Phi(\mathcal{F}^s, \mu^s) = (\mathcal{F}^s, \lambda^{-1} \mu^s).$$
Then 
$\mathcal{F}^u$ and $\mathcal{F}^s$ are called the {\it unstable} and {\it stable foliations} (or {\it invariant foliations}),  and 
$\lambda$ is called the {\it dilatation} of $\Phi$. 
Let  $\mathrm{Mod}(\varSigma)$  be the mapping class group of   $\varSigma$, that is 
$\mathrm{Mod}(\varSigma)$ is the group of isotopy classes 
of orientation preserving homeomorphisms on $\varSigma$ fixing punctures setwise. 
A mapping class  $\phi \in \mathrm{Mod}(\varSigma)$ is called {\it pseudo-Anosov} 
if $\phi$ contains 
a pseudo-Anosov homeomorphism  $\Phi: \varSigma \rightarrow \varSigma$ as a representative. 
The  topological entropy  $\mathrm{ent}(\Phi)$ of $\Phi$ is equal to  $ \log \lambda(\Phi)$, 
and  $\mathrm{ent}(\Phi)$  attains the minimal entropy among all  homeomorphisms  
which are isotopic to  $\Phi$, 
see  \cite[Expos\'{e} 10]{FLP}. 
In the case $\phi= [\Phi]$, 
we denote by $\lambda(\phi)$ and 
$\mathrm{ent}(\phi)$, the dilatation $\lambda(\Phi)$ and 
topological entropy $\mathrm{ent(\Phi)}= \log \lambda(\Phi)$. 

We take an element $\phi \in \mathrm{Mod}(\varSigma)$. 
Let ${\Bbb T}_{\phi}$ be its mapping torus, 
i.e, 
if $\Phi$ is a representative of $\phi$, then 
$${\Bbb T}_{\phi}= \varSigma \times {\Bbb R}/ \sim $$ 
where $\sim$ identifies $(x,t+1)$ with $ (\Phi(x),t)$ 
for $x \in \varSigma$ and $t \in {\Bbb R}$. 
Such a $\Phi$ is called the {\it monodromy} of ${\Bbb T}_{\phi}$. 
The vector field 
$\frac{\partial}{ \partial t}$ on $\varSigma \times {\Bbb R}$ induces a  flow $\Phi^t$ on ${\Bbb T}_{\phi}$, 
which is called the {\it suspension flow}. 
The hyperbolization theorem by Thurston \cite{Thurston3} tells us that 
a $3$-manifold $M$ which is homeomorphic to ${\Bbb T}_{\phi}$ admits a hyperbolic structure 
if and only if $\phi$ is pseudo-Anosov. 
The magic  manifold $N$ is in fact a hyperbolic, fibered $3$-manifold, 
since $N$ is homeomorphic to a $4$-puncture sphere bundle over the circle with the pseudo-Anosov monodromy 
as in  Figure~\ref{fig_3braid}(2) (see Lemma~\ref{lem_summary}(1)).  

\begin{center}
\begin{figure}
\includegraphics[width=3in]{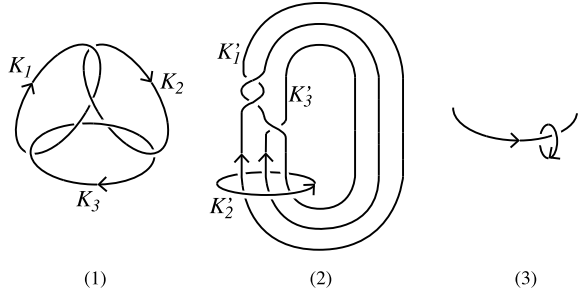}
\caption{(1) $3$ chain link $\mathcal{C}_3$. 
(2) Braided link $\mathrm{br}(\sigma_1^2 \sigma_2^{-1})$. 
Our convention of the orientation is that 
the meridian of a component of the link is chosen as in (3).} 
\label{fig_3braid}
\end{figure}
\end{center}

In a paper \cite{Thurston1} 
Thurston introduced a norm $\|\cdot\|$ on $H_2(M, \partial M; {\Bbb R})$ 
for hyperbolic $3$-manifolds  
and proved that the unit ball $U_M$ with respect to the Thurston norm $\|\cdot\|$ is a compact, convex polyhedron.  
When $M$ is homeomorphic to a hyperbolic, fibered $3$-manifold  ${\Bbb T}_{\phi}$, 
he gave a connection between the Thurston norm $\|\cdot\|$ and fibrations on $M$. 
In particular if such a $3$-manifold $M$ has the second Betti number $b_2(M)$ which is greater than $1$, 
he proved that 
there exists a top dimensional face $\Omega$ on $\partial U_M$, called a {\it fibered face} such that 
every integral class $a$ of $H_2(M, \partial M; {\Bbb Z})$ which is in the open cone over $\Omega$  
corresponds to a fiber $F_a$ of the fibration associated to $a$ with a monodromy $\Phi_a: F_a \rightarrow F_a$, 
that is ${\Bbb T}_{\phi_a}$ is homeomorphic to $M$, 
where $\phi_a= [\Phi_a]$. 
Such an integral class $a$ is called a {\it fibered class}.

Since $\phi_a= [\Phi_a]$  is pseudo-Anosov for each fibered class $a$,  
$M$ provides infinitely many pseudo-Anosovs on fibers with distinct topological types. 
A theorem by Fried \cite{Fried,Fried1} asserts that 
the monodromy $\Phi_a: F_a \rightarrow F_a$ and  the (un)stable foliation $\mathcal{F}_a$ of $\Phi_a$ 
can be described by using the suspension flow $\Phi^t$ and the suspension of the (un)stable foliation of $\Phi$. 
A question we would like to pose is that 
how about practical constructions of $\Phi_a:F_a \rightarrow F_a$ and $\mathcal{F}_a$ for each fibered class $a$. 
The theorem by Fried does not give us 
concrete descriptions of them.

E.~Hironaka gave concrete descriptions of the monodorimies of fibrations associated to sequences of fibered classes 
on some class of hyperbolic fibered $3$-manifolds, 
see \cite{Hironaka1,Hironaka2}. 
However no one constructed explicitly the monodromy of the fibration associated to each fibered class on 
a single hyperbolic, fibered $3$-manifold $M$ with $b_2(M) > 1$.  
In this paper we describe them concretely for the magic manifold $N$. 
The motivation of our study comes from  minimal dilatations on pseudo-Anosovs and their asymptotic behaviors. 
We  fix a surface $\varSigma$, and consider the set of dilatations of pseudo-Anosovs  on $\varSigma$, 
$$ \mathrm{dil}(\varSigma)= \{\lambda(\phi)\ |\ \phi \in \mathrm{Mod}(\varSigma)\  \mbox{is pseudo-Anosov}\}. 
$$
Arnourx-Yoccoz and Ivanov observed that 
for any constant $c>1$, there exist finite elements $\lambda \in \mathrm{dil}(\varSigma)$ so that $\lambda < c$, 
see \cite{Ivanov}. 
In particular, there exists a minimum $\delta(\varSigma)$ of  $\mathrm{dil}(\varSigma)$. 
Let $\varSigma_g$ be a closed surface of genus $g$, and 
$\varSigma_{g,n}$ a closed surface of genus $g$ removing $n \ge 1$ punctures. 
We let $\delta_g= \delta(\varSigma_g)$ and $\delta_{g,n}= \delta(\varSigma_{g,n})$. 
We denote by $D_n$, an $n$-punctured disk. 
A mapping class $\phi \in \mathrm{Mod}(D_n)$ defines a mapping class $\phi' \in \mathrm{Mod}(\varSigma_{0,n+1})$ fixing a puncture 
and vice versa. 
Moreover $\phi$ is pseudo-Anosov if and only if $\phi'$ is pseudo-Anosov. 
In this case the equality $\lambda(\phi)= \lambda(\phi')$ holds.  
Thus $\delta(D_n)$ is equal to the minimal dilatations among pseudo-Anosov elements $\phi' \in \mathrm{Mod}(\varSigma_{0,n+1})$ 
fixing a puncture. 
In particular we have $\delta(D_n) \ge \delta(\varSigma_{0,n+1})$. 

The minimal dilatation problem is to 
determine an explicit value of $\delta(\varSigma)$, and 
to identify a pseudo-Anosov element  $ \phi \in \mathrm{Mod}(\varSigma)$ which achieves  $\delta(\varSigma)$ 
(i.e, minimizer of $\delta(\varSigma)$).  
Naive but natural question is this: 
What does the pseudo-Anosov homeomorphism $\Phi$ on $\varSigma$ which achieves $\delta(\varSigma)$ look like? 
In other words, what does a train track representative of $\phi = [\Phi]$ 
(which enables us to describe the dynamics of $\Phi:\varSigma \rightarrow \varSigma$) 
look like?

\begin{small}
\begin{table}[hbtp]
\caption{Asymptotic behaviors of minimal dilatations and smallest known upper bounds, 
where 
$\delta(D_4) \approx 2.2966$ is the largest root of $t^4 -2t^3 -2t+1$ \cite{KLS}, and 
$\delta(D_5) \approx 1.7220$ is the largest root of $t^4-t^3-t^2-t+1$ \cite{HS}.} 
\label{table_section}
\begin{center}
$\left|\begin{array}{c r|c c r}
\hline
\mbox{asymptotic behaviors} & & & \mbox{smallest known upper bounds}& \\
\hline
 \log \delta_g \asymp 1/g  &\mbox{\cite{Penner}}  &  \mbox{(U1)}
 &  \displaystyle \limsup_{g \to \infty} \, g \log  \delta_g \le  \log(\tfrac{3+ \sqrt{5}}{2}) &\mbox{\cite{Hironaka,AD,KT1}} \\
\hline
 \log \delta_g^+ \asymp 1/g & \mbox{\cite{Minakawa,HK}}  & \mbox{(U2)}& 
\displaystyle \limsup_{\substack{g \not\equiv 0 \pmod{6} \\ g \to \infty}} g \log  \delta_g^+ \le \log(\tfrac{3+ \sqrt{5}}{2}) &\mbox{\cite{Hironaka,KKT2}}\\
& & \mbox{(U3)}& \displaystyle \limsup_{\substack{g \equiv 6 \pmod{12} \\ g \to \infty}}  g  \log \delta^+_g  \le 2 \log  \delta(D_5) &\mbox{\cite{KKT2}}\\
\hline
\log \delta_{0,n} \asymp 1/n  &\mbox{\cite{HK}}&\mbox{(U4)}&    \displaystyle  \limsup_{n \to \infty}n \log \delta_{0,n} \le 2 \log (2+ \sqrt{3}) &\mbox{\cite{HK,KT}}\\
\hline
\log \delta_{1,n} \asymp 1/n  & \mbox{\cite{Tsai}}& \mbox{(U5)}&   
\displaystyle  \limsup_{n \to \infty} n \log \delta_{1,n} \le 2 \log \delta(D_4) & \mbox{\cite{KKT2}}\\
 \hline 
 \mbox{Given}\  g \ge 2,\ \log \delta_{g,n} \asymp \frac{\log n}{n}  &\mbox{\cite{Tsai}} & \mbox{(U6)}&  
 \displaystyle\limsup_{n \to \infty} \tfrac{n \log \delta_{g,n}}{\log n} \le 2\ 
\mbox{if}\ g \ \mbox{enjoys\ }(*)&\mbox{\cite{KT2}}\\
\hline
\end{array}\right|$
\end{center} 
\end{table}  
\end{small}

Some of the minimal dilatations are already determined. 
Also there are partial results.  
For example, $\delta_2$ is computed in \cite{CH}, 
but an explicit value of $\delta_g$ is not known for $g \ge 3$. 
If  we denote by $\delta_g^+$,  the minimal dilatation of pseudo-Anosovs defined on $\varSigma_g$ 
with orientable invariant foliations, 
then  explicit values of $\delta_g^+$  for $2 \le g \le 8$ except $g=6$ are known, 
see \cite{Zhirov,LT} and \cite{Hironaka,AD,KT1}. 
The minimal dilatation on an $n$-punctured disk, $\delta(D_n)$ is determined for $ 3 \le n \le 8$, 
see \cite{KLS,HS, LT1}.

The asymptotic behaviors of the minimal dilatations are shown in the left column of Table~\ref{table_section}. 
Here $A_g \asymp B_g$ means that 
there exists a constant $c>0$ which does not depend on $g$ so that 
$\tfrac{A_g}{c} < B_g < c A_g$. 
As we can see from table, 
we have $\log \delta_{0,n} \asymp 1/n $ and $\log \delta_{1,n} \asymp 1/n  $, but 
a result by Tsai~\cite{Tsai} says that 
the situation in the case $g \ge 2$ is quite different from the case $g =0$ or $1$.  
In the right column of Table~\ref{table_section}, 
the smallest known upper bounds of the minimal dilatation. 
We give a precise condition $(*)$ in  (U6) in the following.

\begin{thm}[\cite{KT2}] 
\label{thm_boundary}
Suppose that $g \ge 2$  satisfies 
\begin{quote} 
$(*)$ \hspace{2mm}
$\gcd(2g+1, s)=1$ or $\gcd(2g+1, s+1)=1$ for each $0 \le  s \le g$.
\end{quote}
Then 
\begin{equation}
\label{equation_Tsai}
\limsup_{n \to \infty} \frac{n \log \delta_{g,n}}{\log n} \le 2.
\end{equation}
In particular, if $2g+1$ is prime, then 
$g$ enjoys $(*)$, and hence 
(\ref{equation_Tsai}) holds. 
\end{thm}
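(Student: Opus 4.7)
\emph{Proof proposal.} The ``in particular'' clause is elementary and I would dispose of it first. If $p = 2g+1$ is prime, then for $s = 0$ we invoke $\gcd(p, s+1) = \gcd(p, 1) = 1$, while for each $1 \le s \le g$ we have $0 < s < p$, and primality of $p$ forces $\gcd(p, s) = 1$. Hence $(*)$ is automatic in the prime case, and the ``in particular'' assertion reduces to (\ref{equation_Tsai}).

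For the main inequality, the plan is to exhibit, for each $g$ enjoying $(*)$ and each sufficiently large $n$, an explicit pseudo-Anosov mapping class $\phi_n \in \mathrm{Mod}(\varSigma_{g,n})$ satisfying
\begin{equation*}
\log \lambda(\phi_n) \le \frac{2 \log n + O_g(1)}{n}.
\end{equation*}
The natural source of such a family is the cyclic branched cover construction applied to pseudo-Anosovs on punctured spheres coming from the magic manifold $N$. Specifically, the fibered face theory developed earlier in this paper supplies pseudo-Anosov homeomorphisms $\Psi_m : \varSigma_{0,m} \to \varSigma_{0,m}$ whose dilatations obey the sharp punctured-sphere asymptotics recorded in (U4), and which can be arranged to commute with a rotation of order $p = 2g+1$ permuting the punctures. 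Taking the associated $p$-fold cyclic branched cover then produces a pseudo-Anosov $\phi_n$ on some $\varSigma_{\tilde g, n}$ with $n \sim pm$ punctures and dilatation equal to $\lambda(\Psi_m)$.

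The role of condition $(*)$ is to make the arithmetic of the Riemann--Hurwitz formula compatible with the target genus $\tilde g = g$. For a degree-$p$ cyclic cover of the sphere with ramification data indexed by pairs of the form $(s, s+1)$, the achievable genera of the cover are governed by the coprimality conditions $\gcd(p,s) = 1$ or $\gcd(p,s+1) = 1$ for $0 \le s \le g$; this is exactly the content of $(*)$. Substituting $m \sim n/p$ into $\log \lambda(\phi_n) = \log \lambda(\Psi_m)$ and using the explicit train track representatives provided by the fibered face yields (\ref{equation_Tsai}), with the leading constant $2$ coming from the (U4) bound converted through the cover.

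The hard part will be the simultaneous realisation of the three ingredients: producing the symmetric family $\Psi_m$ from the fibered face of $N$ with the required $\mathbb{Z}/p$--action on the punctures, choosing ramification data permitted by $(*)$ so that the cover lands on $\varSigma_{g,n}$ rather than on a surface of the wrong genus, and verifying that the lifted map remains pseudo-Anosov (rather than acquiring a periodic or reducible factor from the covering symmetry) with dilatation precisely $\lambda(\Psi_m)$. The delicate point is that $(*)$ essentially captures the combinatorial range available to this cyclic cover construction, so genera falling outside $(*)$ cannot be hit directly and would require a genuinely different approach.
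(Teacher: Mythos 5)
Your reduction of the ``in particular'' clause to primality of $2g+1$ is correct. But the main argument has a gap that is fatal, not just ``the hard part.'' If the cyclic cover scheme worked as you describe --- a family $\Psi_m$ on $\varSigma_{0,m}$ with $\log\lambda(\Psi_m)\le C/m$ for a universal constant $C$, lifting across a degree-$p=2g+1$ cyclic branched cover to a pseudo-Anosov $\phi_n$ on $\varSigma_{g,n}$ with $n\sim pm$ and $\log\lambda(\phi_n)=\log\lambda(\Psi_m)$ --- then $\log\lambda(\phi_n)=O(p/n)=O_g(1/n)$, and hence $n\log\delta_{g,n}/\log n\to 0$. That contradicts Tsai's theorem \cite{Tsai}, which gives the matching \emph{lower} bound $\log\delta_{g,n}\asymp\frac{\log n}{n}$ for fixed $g\ge 2$ (left column of Table~\ref{table_section}). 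So the family of lifts you need cannot exist. Concretely: to keep the cover genus fixed at $g$, Riemann--Hurwitz forces a cyclic cover of $S^2$ of bounded degree (at most $2g+1$) with a bounded number of ramified branch points, so all but $O_g(1)$ of the $m$ punctures of the base must be unramified --- that is, the class in $H_1(\varSigma_{0,m};\mathbb{Z}/p)$ defining the cover kills almost every puncture meridian. But a small-dilatation $\Psi_m$ in the (U4) family permutes those punctures in a few long cycles, so any $\Psi_m$-invariant such class must be constant on each cycle, and the resulting cover degenerates (it is disconnected or has genus growing with $m$). You cannot simultaneously keep the genus fixed, the degree fixed, and let $m\to\infty$.

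The paper's actual route (due to \cite{KT2}, sketched in Example~\ref{ex_Tsai}) uses no covers at all. One takes the fibered classes $a_{(g,p)}=(p-g,p-g,2g+1)_+\in int(C_\Delta)$ of $N$ itself; their projections converge, as $p\to\infty$, to the point $[\frac{1}{2},1]$ on the \emph{boundary} $\partial\Delta$ of the fibered face. For primitive $a_{(g,p)}$ the fiber is already of type $\varSigma_{g,2p+4}$, with genus exactly $g$ and $n=2p+4$ punctures; nothing is lifted or covered. The dilatation is the largest root of $Q_{(p-g,p-g,2g+1)_+}(t)=t^{2p+2g+2}-t^{2p+1}-2t^{p+g+1}-t^{2g+1}+1$, and a short asymptotic analysis of this equation gives $\log\lambda(a_{(g,p)})\sim\frac{\log p}{p}\sim\frac{2\log n}{n}$, which is where the constant $2$ in (\ref{equation_Tsai}) comes from. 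Condition $(*)$ is an arithmetic device to guarantee that sufficiently many of the relevant fibered classes are primitive, so that one obtains a sequence of valid $n$ on which to take the limsup; it is not a Riemann--Hurwitz compatibility condition as your proposal suggests.
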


\noindent
The upper bounds (U1)--(U6) are proved by examples of sequences of pseudo-Anosovs. 
The magic manifold $N$ is involved in these upper bounds. 
There exists a sequence of fibered classes of $N$ corresponding to each of (U1), $\cdots$, (U6). 
The projection onto a fibered face 
of the sequence corresponding to (U6) converges to 
a single point which lies on the boundary of a fibered face of $N$. 
On the other hand, the projection of other sequence  converges to some point in the interior of the fibered face of $N$. 
An interesting feature is the following. 
The mapping torus ${\Bbb T}_{\phi}$ of each example $\phi$ which appears in the sequences  is either $N$ or 
the fibration of ${\Bbb T}_{\phi}$ comes from a fibration of $N$ by Dehn filling cusps along the boundary slopes of a fiber. 
This is also true for known minimizers of the minimal dilatations $\delta_2$, $\delta(D_n)$ for $3 \le n \le 8$ and 
$\delta_g^+$ for $2 \le g \le 8$ except $g=6$. 
These results say that the topological types of fibers of fibrations on $N$ are surprisingly full of variety. 
However, no explicit constructions of sequences of pseudo-Anosovs needed for  
the proofs of (U1)--(U6) except (U4)  
were given so far. 
Also an explicit example of a minimizer of $\delta_7^+$ was not given. 
In this paper we prove the following which allows us to 
construct pseudo-Anosovs in question explicitly.

\begin{thm}
\label{thm_algorithm}
We have algorithms to construct the followings. 
For each fibered class  $a$ of $N$, 
\begin{enumerate}
\item[(1)] 
 the monodromy $\Phi_a: F_a \rightarrow F_a$ of the fibration on $N$ associated to  $a$, and 

\item[(2)] 
in the case $a$ is primitive, 
a train track representative $\mathfrak{p}_a: \tau_a \rightarrow \tau_a$ of $\phi_a= [\Phi_a]$ 
whose incidence matrix  is Perron-Frobenius.
\end{enumerate}
\end{thm}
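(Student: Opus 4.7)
The plan is to exploit the explicit fibration of $N$ over the circle with fiber the $4$-punctured sphere and pseudo-Anosov monodromy $\Psi$ given by the braid $\sigma_1^2\sigma_2^{-1}$ (Lemma~\ref{lem_summary}(1)), and to propagate this structure across an entire fibered face of the Thurston norm ball of $N$ by means of a single branched surface. The fibered faces of $N$ are finite in number and, modulo the symmetries of the $3$-chain link $\mathcal{C}_3$, all lie in one orbit; so it suffices to produce the algorithms on one distinguished face $\Omega$ that contains the class of the base fibration above, and then transport the construction to the remaining faces.

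First, I would fix a train track representative $\mathfrak{p}:\tau\to\tau$ of $[\Psi]\in\mathrm{Mod}(\varSigma_{0,4})$ whose incidence matrix is Perron--Frobenius; because $\varSigma_{0,4}$ is small this is essentially a hand computation. Suspending the fiber gives $N$, and suspending $\tau$ under the flow $\Psi^t$ yields a branched surface $B\subset N$. The key property, verified directly from the combinatorics of $\mathfrak{p}$, is that $B$ fully carries every fiber of every fibration whose class lies in the open cone $C(\Omega)$ over $\Omega$. This is where Fried's theorem is made constructive: a single combinatorial object $B$ packages all of the fibers in the face simultaneously.

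Next, for a given fibered class $a\in H_2(N,\partial N;{\Bbb Z})\cap C(\Omega)$, I would determine the unique nonnegative integer weight vector $w(a)$ on the sectors of $B$ whose associated carried surface represents $a$. These weights are prescribed linearly by $a$ together with the boundary slope data from the meridian framing of $\partial N$ fixed in Figure~\ref{fig_3braid}(3). Cutting $N$ along $B$ yields a finite union of product regions with small, fixed combinatorics; reassembling sectors with multiplicities $w(a)$ realises $F_a$ explicitly as an identification space of polygons, and the monodromy $\Phi_a$ is the first-return map of $\Psi^t$ to $F_a$, obtained by flowing each polygonal piece of $F_a$ forward until it next meets $F_a$ and reading off the combinatorial gluing. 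For part~(2), the train track $\tau_a$ is obtained as $B\cap F_a$, with induced map $\mathfrak{p}_a$ given by the first return to $\tau_a$ of the flow along $B$; its incidence matrix is read off from $w(a)$ and from the transitions of $\mathfrak{p}$.

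The main obstacle will be verifying that the incidence matrix of $\mathfrak{p}_a$ is Perron--Frobenius for every primitive $a\in C(\Omega)$, not merely nonnegative. I would attack this by showing that the transition graph of $\mathfrak{p}_a$ is a controlled cover of that of $\mathfrak{p}$, with one vertex for each pair (branch of $\tau$, relevant sheet of $B$ over it recorded by $w(a)$); irreducibility then descends from $\mathfrak{p}$ provided $w(a)$ has strictly positive entries, and this positivity is exactly the condition $a\in C(\Omega)$. Primitivity of $a$ rules out periodic subsystems by preventing the cover from stratifying into proper invariant subgraphs. Combining these two points gives the Perron--Frobenius property uniformly over the primitive classes, and the symmetry of $N$ then yields the same conclusion on the remaining fibered faces.
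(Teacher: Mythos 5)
The central step in your plan does not work: you propose to take the branched surface $B\subset N$ obtained by \emph{suspending} the invariant train track $\tau$ of $\Psi$ under the flow $\Psi^t$, and to read off each fiber $F_a$ as a surface carried by $B$ with sector weights $w(a)$. But a branched surface built by suspending $\tau$ is everywhere \emph{tangent} to the flow direction: it is the union $\bigcup_t\tau\times\{t\}$ (modulo the monodromy identification). Surfaces carried by such a branched surface lie in a small fibered neighborhood of it and are transverse to the I-fibers, hence they too roughly contain the flow direction. A fiber $F_a$ of a fibration on $N$ is, on the contrary, \emph{transverse} to the flow at every point (Theorem~\ref{thm_Fried2}). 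So $F_a$ cannot be carried by $B$, and there is no weight vector $w(a)$ on the sectors of $B$ producing it. What the suspension of $\tau$ does carry is the suspended unstable foliation $\widehat{\mathcal{F}}$; this is exactly the role played by $\mathcal{B}_{\Delta_\pm}$ in the paper, and you are right that intersecting such a branched surface with $F_a$ yields a candidate train track $\tau_a$. But you need a \emph{different} branched surface to produce $F_a$ in the first place. Following Oertel \cite{Oertel}, that second branched surface must be assembled from actual embedded surfaces (minimal representatives of a finite generating set of norm-one classes, glued along intersection curves and branched consistently with the coorientations), not from the suspension of a $1$-complex. This is what $\mathcal{B}_+$ and $\mathcal{B}_-$ in the paper are, built from $F_{\kappa_\pm}$, $F_\beta$, $F_{\alpha+\beta}$; the branch equations on their sectors give exactly the linear/positivity constraints you are after.

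A secondary issue: you claim one branched surface suffices for the whole cone $C(\Omega)$. In the paper it does not; the fibered face has to be partitioned, using the involution of Lemma~\ref{lem_summary}(4) to reduce to $y\ge x$ and then splitting further into the $z\ge 0$ and $z\le 0$ sectors, because the sets $\{F_{\kappa_+},F_\beta,F_{\alpha+\beta}\}$ and $\{F_{\kappa_-},F_\beta,F_{\alpha+\beta}\}$ are each capable of nonnegatively generating only part of the cone. Your Perron--Frobenius argument via a ``controlled cover'' of transition graphs is also more elaborate than needed and is not obviously correct as stated (the induced graph $\Gamma_a$ is not literally a covering of the graph for $\Psi$); the paper gets the PF property for free from the fact that $\tau_a$ carries the unstable foliation of a pseudo-Anosov (Theorem~\ref{thm_PaPe} and the remark following Lemma~\ref{lem_invariant}).
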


\noindent
In \cite{Oertel} Oertel constructs branched surfaces which carry fibers of fibrations on hyperbolic, fibered $3$-manifolds. 
In the proof of Theroem~\ref{thm_algorithm}(1), 
we construct branched surfaces $\mathcal{B}_+$ and $\mathcal{B}_-$ following \cite{Oertel} 
which carry fibers of fibrations associated to fibered classes on $N$.

It is well-known that 
a train track representative $\mathfrak{p}_a: \tau_a \rightarrow \tau_a$ as in Theorem~\ref{thm_algorithm}(2) 
 can recover a pseudo-Anosov homeomorphism 
which represents $\phi_a$, and 
it serves the monodromy $\Phi_a:F_a \rightarrow F_a$ of the fibration associated to $a$. 
However we do not need the claim (2) for the proof of (1). 
We can construct the both fiber $F_a$ and 
monodromy $\Phi_a: F_a \rightarrow F_a$ in an explicit and combinatorial way.

Let $N(r)$ be the manifold obtained from $N$ by Dehn filling one cusp along the slope $r \in {\Bbb Q}$. 
See Figure~\ref{fig_3braid}(3) for our convention of the orientation. 
As a consequence of Theorem~\ref{thm_algorithm}, 
we can give constructive descriptions of monodromies of fibrations associated to any fibered class on the hyperbolic, fibered manifolds $N(r)$ 
for infinitely many $r \in {\Bbb Q}$. 
For example, we can do them for 
Whitehead sister link exterior $N(\tfrac{3}{-2})$, 
the the simplest $3$-braided link exterior $N(\tfrac{1}{-2})$ and 
the Whitehead link exterior $N(1)$.  
In particular, we can construct the following pseudo-Anosov homeomorphisms explicitly:

\begin{itemize}

\item 
A minimizer of $\delta_7^+$, see Example~\ref{ex_ori79}.

\item 
Conjectural minimizers of $\delta_g^+$ for $g \equiv 2,4 \pmod 6$, 
see \cite[Question~6.1]{LT}. 
See also Remark~\ref{rem_hironaka-san} and Example~\ref{ex_LT}.

\item 
Conjectural minimizers of $\delta_g^+$ for large $g$ such that  $g \not\equiv 0 \pmod 6$, 
see \cite[Conjecture~1.12(2)]{KKT2}. 
See also Examples ~\ref{ex_LT}, \ref{ex_ori79}, \ref{ex_ori15}. 

\item 
Conjectural minimizer of $\delta_g$ for large $g$, 
see \cite[Conjecture~1.12(1)]{KKT2}. 
See also Example~\ref{ex_ori79}. 

\item 
Conjectural minimizers of $\delta_{1,n}$ for large $n$, 
see \cite[Conjecture~1.13]{KKT2}. 
See also Example~\ref{ex_whitehead}.

\item 
Sequences of pseudo-Anosov homeomorphisms to give the smallest known upper bounds (U1)--(U6) 
in Table~\ref{table_section}. 
\end{itemize}

\begin{rem}
\label{rem_hironaka-san}
Hironaka gave the first explicit construction of the orientable train track representative on $\varSigma_g$ 
for $g \equiv 2,4 \pmod 6$ 
whose dilatation equals the conjectural minimum $\delta_g^+$ for such a $g$, 
see \cite{Hironaka3}. 
Hironaka also constructed explicitly 
the infinite subsequence of pseudo-Anosov homeomorphisms defined on $\varSigma_g$ with some condition on $g$ 
to prove the upper bound (U1) and (U2), 
see \cite{Hironaka1}. 
\end{rem}

\begin{ques}
Find the word which represents the mapping class $\phi_a= [\Phi_a]$ for each fibered class $a$ of $N$ 
by using the standard generating set on $\mathrm{Mod}(\varSigma_{g,n})$. 
Its word length could be long, but would be represented by a simple word, 
see Remark~\ref{rem_delta0}.  
\end{ques}

\begin{ques}
Develop the methods given in Section~\ref{section_construction} and 
prove the same claim in Theorem~\ref{thm_algorithm} for some class of 
hyperbolic, fibered $3$-manifolds. 
\end{ques}

The paper is organized as follows. 
In Section~\ref{section_preliminarlies}, we review basic facts on train tracks, Thurston norm and clique polynomials.  
We also review  some properties of the magic manifold. 
In Section~\ref{section_construction}, we prove Theorem~\ref{thm_algorithm}. 
In its proof, we construct the directed graph $\Gamma_a$ with a metric on the set of edges, 
which is induced from the train track representative 
 $\mathfrak{p}_a: \tau_a \rightarrow \tau_a$ of $\phi_a$. 
 Such a directed graph $\Gamma_a$ captures the dynamics of both 
 $\Phi_a: F_a \rightarrow F_a$ and $\mathfrak{p}_a: \tau_a \rightarrow \tau_a$. 
 Then we construct the curve complex $G_a$ induced from $\Gamma_a$, 
 which is an undirected,  weighted graph on the set of vertices. 
 Such curve complexes  are recently studied by McMullen \cite{McMullen2}. 
 In our setting,  $G_a$ gives us some insight into 
 what the train track representative $\mathfrak{p}_a: \tau_a \rightarrow \tau_a$  looks like. 
 In Section~\ref{section_catalogue}, 
 we exhibit some subsequences of pseudo-Anosovs 
 which can be used in the proof of the upper bounds (U1)--(U6). 
 We find that the types of curve complexes in each subsequence are fixed. 
 These curve complexes give us some hints to know what the pseudo-Anosovs with the smallest dilations look like. 
\medskip

\noindent
{\bf Acknowledgment.} 
I would like to thank  Hideki Miyachi, Mitsuhiko Takasawa and Hiroyuki Minakawa. 
H.~Miyachi and M.~ Takasawa gave me valuable comments on this paper. 
H.~Minakawa gave a series of lectures on his work at Osaka University in 2004. 
I learned many things on pseudo-Anosovs and pseudo-Anosov flows during his course. 
Theorem~\ref{thm_algorithm}(1) is inspired by his construction of pseudo-Anosovs  \cite{Minakawa}.

\section{Preliminaries} 
\label{section_preliminarlies}

\subsection{Train track} 


Definitions and basic results on train tracks are contained in \cite{PaPe}. 
See also \cite{Los}. 
In this section, we recall them for  convenience. 

Throughout the paper, surfaces are  orientable. 
Let $F$ be a surface with possibly punctures or boundary.  
Let $\tau$ be a branched $1$-submanifold on $F$. 
We say that $\tau$ is a {\it train track} if 
\begin{enumerate}
\item[(1)]
$\tau$ is a smooth graph such that the edges are tangent at the vertices, 
i.e, $\tau$ looks as in Figure~\ref{fig_local}(1) near each vertex of $\tau$,

\item[(2)]
each component of $F \setminus \tau$ is a disk with more than $3$ cusps on its boundary 
or 
an annulus with more than $1$ cusp on one boundary component and with no cusps on the other boundary component 
(i.e, the other boundary component is the one of $\partial F$ or a puncture of $F$.) 
\end{enumerate}

\noindent
See Figure~\ref{fig_seedABCD}(1) for an example of a train track on $\varSigma_{0,4}$. 

Two edges of $\tau$ which are tangent at some vertex make a {\it cusp}, see Figure~\ref{fig_folding}(left).  
Associated to the train track $\tau$, 
we can define a fibered neighborhood $\mathcal{N}(\tau) \subset F$ 
whose fibers are segments given by a retraction $\mathcal{N}(\tau) \searrow \tau$. 
The fibers in this case are called {\it ties}, 
see Figure~\ref{fig_local}(2). 

\begin{center}
\begin{figure}
\includegraphics[width=2.4in]{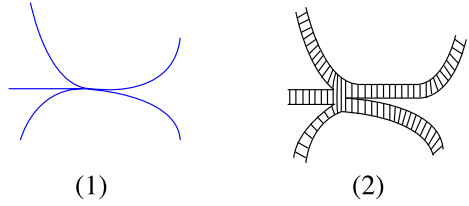}
\caption{(1) Train track near vertex. 
(2) Fibered neighborhood.}
\label{fig_local}
\end{figure}
\end{center}

\begin{center}
\begin{figure}
\includegraphics[width=2.3in]{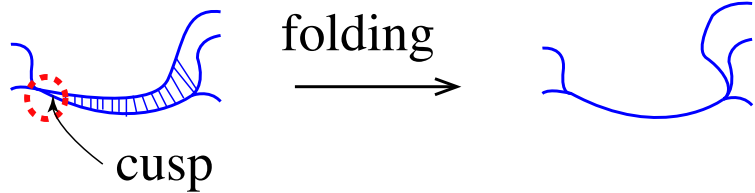}
\caption{Folding map near a cusp.}
\label{fig_folding}
\end{figure}
\end{center}

Let $\mathcal{F}$ be a measured foliation on $F$. 
We say that $\mathcal{F}$ {\it is carried by}  $\tau$ 
if $\mathcal{F}$ can be represented by a  partial measured foliation  
whose support is $\mathcal{N}(\tau)$ and which is transverse to the ties. 

Let $\sigma$ be a train track on $F$. 
We say that $\sigma$ {\it is carried by }$\tau$ 
if $\sigma$ is isotopic to a train track $\sigma'$ 
which is contained in $\mathcal{N}(\tau)$ and 
which is transverse to the ties 
(said differently, every smooth edge path on $\sigma'$ is transverse to the ties). 
Let $f: F \rightarrow F$ be a homeomorphism. 
A train track $\tau$ is {\it invariant under} $[f]$ if 
 $f(\tau)$ is carried by $\tau$, 
 that is, $f(\tau)$ is isotopic to some train track 
 $\sigma'$ which satisfies above. 
 In this case, 
folding edges of $\tau'$ near cusps repeatedly (see Figure~\ref{fig_folding} for a folding map), 
in other words, collapsing  $\tau'$ onto $\tau$ smoothly 
yields a map $\mathfrak{p}: \tau \rightarrow \tau$ such that 
$\mathfrak{p}$ maps vertices to vertices, and $\mathfrak{p}$ is  locally injective at any points which do not map into vertices. 
Such a $\mathfrak{p}: \tau \rightarrow \tau$ is called a {\it train track representative} of $[f]$. 
An edge $e$ of $\tau$ is said to be {\it infinitesimal} (for $\mathfrak{p}$) if 
$e$ is eventually periodic under $\mathfrak{p}$, that is 
$\mathfrak{p}^{m+n}(e)= \mathfrak{p}^n(e)$ for some positive integers $m$ and $n$. 
Other edges of $\tau$ is said to be {\it real}. 
Let $k$ be the number of the real edges of $\tau$. 
Then we have a $k \times k$ non-negative integer matrix $M_{\mathfrak{p}} = (m_{ij})$, called the {\it incidence matrix} for 
$\mathfrak{p}: \tau \rightarrow \tau$ 
{\it (with respect to real edges)}, 
where
$m_{ij}$ is the number of times so that the image $\mathfrak{p}(e_j)$ of the $j$th real edge 
passes through the $i$th real edge $e_i$ 
in either direction. 
Also,  $\mathfrak{p}: \tau \rightarrow \tau$ determines 
a finite, directed graph $\Gamma_{\mathfrak{p}}$ 
by taking a vertex for each real edge of $\tau$, 
and then adding $m_{ij}$ directed edges 
from  the $j$th real edge $e_j$ to the $i$th real edge $e_i$. 
In other words, we have $m_{ij}$ directed edges from $e_j$ to $e_i$ if 
$\mathfrak{p}(e_j)$  passes through $e_i$ in either direction $m_{ij}$ times . 
We say that $\Gamma_{\mathfrak{p}}$ is the {\it induced directed graph} of $\mathfrak{p}: \tau \rightarrow \tau$.

A non-negative integer matrix $M$ is said to be {\it Perron-Frobenius} 
if there exists an integer $\ell \ge 1$ such $M^{\ell}$ is  positive, that is each entry of $M^{\ell}$ is positive. 
In this case, the spectral radius of $M$ is given by the largest eigenvalue of $M$ 
called the {\it Perron-Frobenius eigenvalue}, see \cite{Gantmacher}. 
 
The following theorem is well known. 

\begin{thm}[See Theorem~4.1 in \cite{PaPe} and its proof.]  
\label{thm_PaPe}
If $\Phi: F \rightarrow F$ is a pseudo-Anosov homeomorphism, then 
 there exists a train track $\tau$ on $F$ which carries the unstable foliation $\mathcal{F}^u$ of $\Phi$ and 
 a train track representative $\mathfrak{p}: \tau \rightarrow \tau$ 
of $\phi= [\Phi]$. 
Such a representative $\mathfrak{p}: \tau \rightarrow \tau$ satisfies that 
the incidence matrix  $M_{\mathfrak{p}}$ is Perron-Frobenius 
and its Perron-Frobenius eigenvalue  is exactly equal to $\lambda(\Phi)$. 
\end{thm}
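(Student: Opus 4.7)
The plan is to construct $\tau$ and $\mathfrak{p}$ from a Markov partition adapted to $\Phi$, and then to read off the Perron--Frobenius statements from the partition combinatorics together with topological transitivity of $\Phi$. First I would invoke the classical existence of a Markov partition $\{R_1,\ldots,R_k\}$ for $\Phi$: each $R_i$ is an embedded rectangle with horizontal sides on leaves of $\mathcal{F}^u$ and vertical sides on leaves of $\mathcal{F}^s$, the interiors are pairwise disjoint, $\bigcup_i R_i = F$, and for each pair $(i,j)$ the intersection $\Phi(R_j)\cap R_i$ is a disjoint union of $m_{ij}$ horizontal sub-rectangles each spanning $R_i$ fully in the horizontal direction; corners of the rectangles are arranged to occur only at singularities of the foliations or at punctures of $F$. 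Then collapse each $R_i$ along its vertical (stable) direction onto its horizontal core arc $e_i$ and identify endpoints of the $e_i$'s at the common singularities and punctures, obtaining a graph $\tau\subset F$.

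Next I would verify that $\tau$ is a train track. The prong structure of $\mathcal{F}^u$ at each singularity pairs the incident $e_i$'s into smooth tangent pairs at every vertex, and the complementary regions of $\tau$ are cusped disks (one per singular sector) together with once-cusped annuli at punctures. By construction $\mathcal{F}^u$ is carried by $\tau$, with fibered neighborhood $\mathcal{N}(\tau)=\bigcup_i R_i$ and ties equal to the vertical stable arcs. The Markov property gives $\Phi(\mathcal{N}(\tau))\subset\mathcal{N}(\tau)$ with $\Phi(\tau)$ transverse to the ties, so the tie projection of $\Phi(\tau)$ yields a train track representative $\mathfrak{p}:\tau\to\tau$ of $\phi=[\Phi]$; the image $\mathfrak{p}(e_j)$ traverses $e_i$ exactly $m_{ij}$ times, so $M_{\mathfrak{p}}=(m_{ij})$ coincides with the Markov transition matrix.

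For the Perron--Frobenius conclusion, topological transitivity (in fact mixing) of $\Phi$ forces some power $M_{\mathfrak{p}}^\ell$ to be strictly positive, so $M_{\mathfrak{p}}$ is Perron--Frobenius. To pin down the eigenvalue, let $w_j$ denote the horizontal $\mu^s$-length of $R_j$. The relation $\Phi(\mathcal{F}^s,\mu^s)=(\mathcal{F}^s,\lambda^{-1}\mu^s)$ forces $\mu^s(\Phi(e_j))=\lambda\,w_j$, while the decomposition of $\Phi(R_j)$ into horizontal-full sub-rectangles of the $R_i$'s forces $\mu^s(\Phi(e_j))=\sum_i m_{ij}\,w_i$. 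Equating yields $\lambda w_j=\sum_i m_{ij}w_i$, so $(w_j)$ is a strictly positive left eigenvector of $M_{\mathfrak{p}}$ with eigenvalue $\lambda(\Phi)$; by Perron--Frobenius uniqueness this eigenvalue is the spectral radius.

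The main technical obstacle is verifying that the collapsed graph genuinely satisfies the train-track axioms of Section~\ref{section_preliminarlies} --- namely local smoothness at vertices and the cusp condition on complementary regions --- and that a Markov partition with corners only at singularities and punctures can indeed be produced. Both reduce to a careful local analysis using the $k$-prong model of a pseudo-Anosov singularity, and constitute the principal content of the argument in the cited reference; once they are in place, the transition-matrix identification and the eigenvalue computation above are essentially formal.
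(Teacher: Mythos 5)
Your proposal is correct and is essentially the Markov-partition argument that underlies the cited reference; the paper itself offers no independent proof, simply deferring to \cite{PaPe}, and collapsing a Markov partition along the stable direction is precisely the standard way to obtain an invariant train track carrying $\mathcal{F}^u$. The two places you flag as needing care --- that a Markov partition can be chosen with corners only at singularities and punctures, and that the collapsed complex genuinely satisfies the train-track smoothness and complementary-region conditions --- are exactly the content delegated to \cite{PaPe}; given those, your identification of $M_{\mathfrak{p}}$ with the Markov transition matrix, the primitivity from topological mixing, and the $\mu^s$-width eigenvector computation pinning the Perron--Frobenius eigenvalue to $\lambda(\Phi)$ are all correct and standard.
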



Conversely, if $f: F \rightarrow F$ is a homeomorphism and 
if $\mathfrak{p}: \tau \rightarrow \tau$ is a train track representative of $\phi=[f]$ 
such that its incidence matrix $M_{\mathfrak{p}}$  is Perron-Frobenius, then $\phi$ is pseudo-Anosov 
whose dilatation $\lambda(\phi)$ equals the Perron-Frobenius eigenvalue of $M_{\mathfrak{p}}$, 
see Bestvina-Handel \cite[Section~3.4]{BH}

\subsection{Thurston norm, fibered face, entropy function}
\label{subsection_ThurstonNorm}

We review the basic results on Thurston norm and the  relation between Thurston norm and hyperbolic, fibered $3$-manifolds 
developed by Thurston, Fried, Matsumoto and McMullen. 
Let $M$ be an oriented hyperbolic $3$-manifold possibly $\partial M \ne \emptyset$. 
We recall Thurston norm $\|\cdot \|: H_2(M, \partial M; {\Bbb R}) \rightarrow {\Bbb R}$. 
For more detail, see \cite{Thurston1}.  
Let $F= F_1 \cup F_2 \cup \cdots \cup F_k$ 
be a finite union of oriented, connected surfaces. 
We define $\chi_-{(F)}$ to be  
$$\chi_-(F)= \sum_{i=1}^k \max \{0, -\chi (F_i)\}.$$ 
Thurston norm $\|\cdot\|$ is defined for an integral class $a \in H_2(M, \partial M; {\Bbb Z})$ by 
$$\|a\|= \min_{F} \{\chi_-(F)\},$$ 
where the minimum is taken over all oriented surfaces $F$ embedded in $M$ satisfying $a= [F]$. 
The surface $F$ which realizes the minimum is called a {\it minimal representative} of $a$. 
Then $\|\cdot\|$ admits a unique continuous extension 
$\|\cdot\|: H_2(M, \partial M; {\Bbb R}) \rightarrow {\Bbb R}$ 
which is linear on rays through the origin. 
It is known that 
the unit ball $U_M \subset H_2(M, \partial M; {\Bbb R}) $ with respect to Thurston norm $\|\cdot\|$ 
is a compact, convex  polyhedron \cite{Thurston1}.  

Let $\Omega$ be any top dimensional face on the boundary $\partial U_M$ of Thurston norm ball. 
We denote by $C_{\Omega}$, the cone over $\Omega$ with the origin, and 
we denote by $int(C_{\Omega})$, the interiors of $C_{\Omega}$. 
Thurston proved in \cite{Thurston1}  that 
if $M$ is a surface bundle over the circle and if $F$ be any fiber of the fibration on $M$, 
then there exists a top dimensional face $\Omega$ on  $\partial U_M$ so that 
$[F]$ is an integral class of  $int(C_\Omega)$. 
Moreover for any integral class $a \in int(C_{\Omega})$, 
its minimal representative $F_a$  becomes a fiber of a fibration on $M$, and 
$F_a$ is unique up to isotopy along flow lines. 
Such a face $\Omega$ is called an {\it fibered face} and an integral class $a \in int(C_{\Omega})$ 
is called a {\it fibered class}. 
Thus, if the second Betti number $b_2(M)$ is greater than $1$, then the single $3$-manifold $M$ provides 
infinitely many pseudo-Anosovs on surfaces with different topological types.

Let $\Omega$ be a fibered face of $M$. 
If $a \in int(C_{\Omega})$ is primitive and integral, then 
the minimal representative $F_a$ is a connected fiber of the fibration associated $a$. 
The mapping class $\phi_a= [\Phi_a]$ of 
the monodromy $\Phi_a: F_a \rightarrow F_a$ of its fibration is pseudo-Anosov (since $M$ is hyperbolic). 
We define the dilatation $\lambda(a)$ and entropy $\mathrm{ent}(a)$ 
to be the dilatation and entropy of the pseudo-Anosov $\phi_a$. 
The entropy function defined on primitive fibered classes is naturally extended to rational classes. 
In fact for a rational number $r$ and a primitive fibered class $a$, 
the entropy of $\mathrm{ra}$ is defined to be $\mathrm{ent}(ra)= \tfrac{1}{|r|}\mathrm{ent}(a)$.

\begin{thm}[\cite{Fried1,Matsumoto,McMullen}]
\label{thm_Fried1} 
The function given by $a \mapsto \mathrm{ent}(a)$ for each rational class $a \in int(C_{\Omega})$ 
extends to a real analytic convex function on $int(C_{\Omega})$. 
The restriction  $\mathrm{ent}|_{int(\Omega)}: int(\Omega) \rightarrow {\Bbb R}$ is a strictly convex function 
which goes to $\infty$ toward the boundary of $\Omega$. 
\end{thm}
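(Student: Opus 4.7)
The plan is to combine Fried's cross-section theorem with a McMullen-style Teichm\"uller polynomial, producing a single analytic object that simultaneously encodes all dilatations $\lambda(a)$, and then to derive convexity and boundary blow-up from homogeneity. First I would fix a primitive fibered class $a_0 \in int(C_{\Omega})$ and its monodromy $\Phi_{a_0}: F_{a_0} \rightarrow F_{a_0}$, and consider the suspension pseudo-Anosov flow $\Phi^t$ on $M$. Fried's theorem then gives that for every other fibered class $a \in int(C_{\Omega})$ the minimal representative $F_a$ can be realized as a cross-section to this same flow, with monodromy $\Phi_a$ equal to the first-return map and with invariant foliations obtained by intersecting $F_a$ with the two stable/unstable singular foliations of $\Phi^t$. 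In particular all the dynamical data of the $\Phi_a$ is packaged in a single flow.

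Next, following McMullen, I would build a Teichm\"uller-type polynomial $\Theta_{\Omega} \in {\Bbb Z}[G]$, where $G = H_1(M;{\Bbb Z})/\mathrm{torsion}$, from an invariant branched surface carrying the unstable foliation of $\Phi^t$, with the property that for each primitive integral $a \in int(C_{\Omega})$ the dilatation $\lambda(a)$ is the largest real root of the one-variable specialization $\Theta_{\Omega}^{a}(t)$ obtained by pushing the monomials of $\Theta_{\Omega}$ through the homomorphism $G \rightarrow {\Bbb Z}$ induced by $a$. The coefficients of $\Theta_{\Omega}^{a}(t)$ depend polynomially on $a \in H_2(M,\partial M;{\Bbb R})$, and since the Perron eigenvalue of the associated transition matrix is simple by Perron--Frobenius, it varies real-analytically with $a$. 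Thus $\mathrm{ent}(a) = \log \lambda(a)$ is the restriction to rationals of a real-analytic function on $int(C_{\Omega})$, and convexity follows from the standard fact that $\log$ of the Perron root of a matrix whose entries are log-convex functions of parameters is itself convex in those parameters.

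Finally, the degree-($-1$) homogeneity $\mathrm{ent}(ra) = \mathrm{ent}(a)/|r|$ coming from the definition (equivalently, from the fact that the first-return time to $F_{ra}$ scales by $|r|$) forces $\mathrm{ent}$ not to be affine along any segment not contained in a ray through the origin. Because $int(\Omega)$ is a hyperplane section of the cone, no such segment lies in $int(\Omega)$; combining this with the real-analytic convexity established above, the restriction $\mathrm{ent}|_{int(\Omega)}$ must be strictly convex (otherwise analyticity would propagate affineness along a whole ray, contradicting homogeneity). The boundary blow-up is then extracted from the behavior of $\Theta_{\Omega}^{a}$ as $a$ approaches $\partial \Omega$: in that limit the corresponding cross-section degenerates and shortest closed orbits pierce it fewer and fewer times, so the Perron root of the specialization is forced to diverge.

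I expect the main obstacle to be this last step, the divergence of $\mathrm{ent}$ on $\partial \Omega$. The real-analytic convexity is essentially formal once the Teichm\"uller polynomial is in hand, and strict convexity on $int(\Omega)$ is a soft consequence of homogeneity plus analyticity; but controlling how $\Theta_{\Omega}^{a}$ degenerates along sequences approaching $\partial \Omega$ requires a quantitative geometric input about the pseudo-Anosov flow (shrinking return times, or equivalently the fact that boundary classes correspond to non-fibered classes on which the surface is no longer transverse to $\Phi^t$), and is where the argument leaves pure linear algebra and becomes genuinely $3$-dimensional.
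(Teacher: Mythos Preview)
The paper does not prove Theorem~\ref{thm_Fried1}; it is quoted as a background result with citations to Fried~\cite{Fried1}, Matsumoto~\cite{Matsumoto}, and McMullen~\cite{McMullen}, and no argument is given. So there is no ``paper's own proof'' to compare your proposal against.

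That said, your outline is broadly in the spirit of the cited sources: Fried's cross-section theorem to put all fibers in one flow, and McMullen's Teichm\"uller polynomial $\Theta_{\Omega}$ to obtain a single analytic object whose specializations detect $\lambda(a)$. Two comments on the soft spots. First, your convexity step (``$\log$ of the Perron root of a matrix with log-convex entries is convex'') is not quite the mechanism used in the literature; the cleanest route is that $1/\mathrm{ent}$ is concave on $int(C_{\Omega})$ because it is an infimum of linear functionals (the flux through cross-sections divided by topological entropy of the flow), which is Fried's original argument, or alternatively McMullen shows directly that $\mathrm{ent}$ is real-analytic and strictly convex on rays in $int(\Omega)$ via the simplicity of the Perron root of $\Theta_{\Omega}^a$. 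Your strict-convexity deduction from homogeneity plus analyticity is morally right but, as written, does not quite close: a degree $-1$ homogeneous convex function can be affine along segments contained in $2$-planes through the origin, and such segments can meet $int(\Omega)$; one needs an extra input (e.g.\ that $1/\mathrm{ent}$ is strictly concave, or the uniqueness in Perron--Frobenius) to rule this out. Second, you are right that the boundary blow-up is the substantive step; in the cited proofs it comes from showing that $1/\mathrm{ent}$ extends continuously by $0$ to $\partial C_{\Omega}$ (Fried), or equivalently that the Newton polytope of $\Theta_{\Omega}$ is dual to the closure of $\Omega$ (McMullen), rather than from an ad hoc analysis of short orbits.
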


\noindent
By properties of $\|\cdot\|$ and $\mathrm{ent}$, we see that the {\it normalized entropy function} 
$$\mathrm{Ent}= \|\cdot\| \mathrm{ent}: int(C_{\Omega}) \rightarrow {\Bbb R}$$ 
is constant on each ray in $int(C_{\Omega})$ through the origin.

We choose  $\phi= [\Phi] \in \mathrm{Mod}(\varSigma)$, and 
we consider the mapping torus ${\Bbb T}_{\phi}$ with the suspension flow $\Phi^t$. 
Hereafter we fix the orientation of $\varSigma$ 
so that its normal direction coincides with the flow direction of $\Phi^t$. 
For  $S \subset \varSigma$, 
we define  $S^t \subset  {\Bbb T}_{\phi}$ to be 
the image of $S \times \{t\} \subset \varSigma \times \{t\}$ under the projection
$ p: \varSigma \times {\Bbb R} \rightarrow {\Bbb T}_{\phi}$.

\begin{thm}[Theorem~7 and Lemma in \cite{Fried}]
\label{thm_Fried2}
Let $\Phi: F \rightarrow F$ be a pseudo-Anosov homeomorphism 
with stable and unstable foliations $\mathcal{F}^s$ and $\mathcal{F}^u$ 
on an oriented surface $F$, 
and let $\phi= [\Phi]$. 
Let $\widehat{\mathcal{F}^s}$ and $\widehat{\mathcal{F}^u}$ 
denote the suspension of $\mathcal{F}^s$ and $\mathcal{F}^u$ by $\Phi$. 
If $\Omega$ is a fibered face on ${\Bbb T}_{\phi}$ with $[F] \in int(C_{\Omega})$, 
then 
for any minimal representative $F_a$ of any fibered class $a \in int(C_{\Omega})$, 
we can modify $F_a$ by isotopy which satisfies the followings. 
\begin{enumerate}
\item[(1)] 
$F_a$ is transverse to the flow $\Phi^t$, 
and the first return map $:F_a \rightarrow F_a$ is precisely the pseudo-Anosov monodromy $\Phi_a: F_a \rightarrow F_a$ 
of the fibration on ${\Bbb T}_{\phi}$ associated to $a$. 
Moreover $F_a$ is unique up to isotopy along flow lines. 

\item[(2)] 
The stable and unstable foliations of $\Phi_a$ are given by 
$\widehat{\mathcal{F}^s} \cap F_a$ and $\widehat{\mathcal{F}^u} \cap F_a$.  
\end{enumerate}
\end{thm}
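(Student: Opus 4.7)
The plan is to invoke Fried's cross-section theory for pseudo-Anosov flows, organized into three stages.

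First, I would set up the suspension flow $\Phi^t$ on ${\Bbb T}_{\phi}$ as a pseudo-Anosov flow. By construction $F$ is itself a cross-section: every flow line meets $F$ in forward and backward time and the first return map is $\Phi$. The suspended foliations $\widehat{\mathcal{F}^u}$ and $\widehat{\mathcal{F}^s}$ are $2$-dimensional, flow-invariant, and their transverse measures are uniformly expanded and contracted by $\Phi^t$ at rate $\log\lambda(\Phi)$. Thus $\Phi^t$ is a (smooth) pseudo-Anosov flow in the sense of Fried, and everything we need about $({\Bbb T}_{\phi}, \Phi^t)$ is packaged inside this flow.

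Second, I would apply Fried's cross-section theorem. Let $\mathcal{D} \subset H^1({\Bbb T}_{\phi};{\Bbb R})$ be the set of cohomology classes represented by a closed $1$-form that is positive on the flow direction. Fried shows $\mathcal{D}$ is an open convex cone and that every $\omega \in \mathcal{D}$ is Poincar\'e dual to an embedded cross-section $F_\omega$ of $\Phi^t$, unique up to isotopy along flow lines; furthermore $-\chi(F_\omega)$ depends linearly on $\omega$ and equals $\|[F_\omega]\|$. The key identification step is that, under Poincar\'e--Lefschetz duality, $\mathcal{D}$ corresponds precisely to $int(C_{\Omega})$: one inclusion is automatic since cross-sections fiber ${\Bbb T}_{\phi}$ over $S^1$ and hence represent fibered classes in $int(C_{\Omega})$; the reverse inclusion follows by combining Fried's linear Euler-characteristic formula with Thurston's characterization of fibered faces, so that any fibered class in $int(C_{\Omega})$ is dual to a closed form positive on the flow.

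Third, given any fibered class $a \in int(C_{\Omega})$ and a minimal representative $F_a$ with $\|a\|=\chi_-(F_a)$, Fried's theorem provides a cross-section in the class of $a$; since both this cross-section and $F_a$ have the same Thurston norm and the same homology class, they agree up to isotopy, and we may further isotope $F_a$ along flow lines so that it is transverse to $\Phi^t$. The first return map is then the monodromy $\Phi_a\colon F_a\to F_a$, giving (1). For (2), the restrictions $\widehat{\mathcal{F}^u}\cap F_a$ and $\widehat{\mathcal{F}^s}\cap F_a$ are $1$-dimensional singular foliations on $F_a$ (the transversality of $F_a$ to the flow ensures these intersections are genuine singular foliations), invariant under $\Phi_a$; the uniform expansion/contraction of the transverse measures of $\widehat{\mathcal{F}^{u/s}}$ along flow lines translates, via the return-time function, into stretching and contracting by a common factor $\lambda(a)>1$, which identifies them as the unstable and stable foliations of $\Phi_a$.

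The main obstacle is the identification $\mathcal{D} \leftrightarrow int(C_{\Omega})$ in the second step: it requires reconciling Fried's dynamical picture (cross-sections to a flow with a linear Euler-characteristic formula) with Thurston's topological picture (minimal representatives of classes in the cone over a fibered face), and this is precisely where Fried's technical input via Markov partitions is essential. Once that equivalence is in place, transversality, uniqueness up to flow-line isotopy, and the description of the invariant foliations follow from general properties of cross-sections to flows.
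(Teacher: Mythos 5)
The paper does not prove this statement: it is stated as a citation of Fried's ``Fibrations over $S^1$ with pseudo-Anosov monodromy'' (Theorem~7 and the accompanying Lemma there), and is subsequently used as a black box (e.g.\ in Remark~\ref{rem_fiber} and Lemma~\ref{lem_preTrainTrack}). Your three-stage outline --- recognize $\Phi^t$ as a pseudo-Anosov flow, invoke Fried's cross-section theorem to identify the cone $\mathcal{D}$ of classes dual to cross-sections with $int(C_\Omega)$, then read off the first-return map and the restricted suspended foliations --- is indeed the structure of Fried's original argument, so it is the right proof of the cited result. One point to be a bit more careful about: in stage three you assert that the cross-section supplied by Fried and the minimal representative $F_a$ ``agree up to isotopy'' because they share a homology class and Thurston norm. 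That conclusion is correct but its justification is slightly stronger than ``same class and same norm'': one needs Thurston's uniqueness of fibers in a fibered class (any two connected fibers of fibrations over $S^1$ in the same primitive fibered class are isotopic), which the paper records in Section~\ref{subsection_ThurstonNorm} as uniqueness up to isotopy along flow lines. With that reference supplied, your sketch is a faithful reconstruction of Fried's argument.
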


Following \cite{Minakawa}, 
we introduce {\it flowbands} in  ${\Bbb T}_{\phi}$. 

\begin{definition}
\label{def_flowband}
Let $J_1$ and $J_2$ be embedded arcs in ${\Bbb T}_{\phi}$. 
Suppose that $J_1$ and $J_2$ are transverse to  $\Phi^t$. 
We say that $J_1$ {\it is connected to} $J_2$ ({\it with respect to} $\Phi^t$) 
if there exists a positive continuous function 
$\frak{t}: J_1 \rightarrow {\Bbb R}$ 
such that for any $ x\in J_1$, we have

\begin{itemize}
\item 
$\Phi^{\frak{t}(x)}(x) \in J_2$,  
$\Phi^t(x) \not\in J_2$ for $0 < t < \frak{t}(x)$, and 

\item 
the map $J_1 \to J_2$ given by $x \mapsto \Phi^{\frak{t}(x)} (x)$ 
is a homeomorphism. 
\end{itemize}
The {\it flowband} $[J_1, J_2]$ is defined by 
$$[J_1, J_2]=\{ \Phi^t(x)\ |\ x \in J_1,\ 0 \le t \le \frak{t}(x)\}.$$
\end{definition}
Flowbands are used to build branched surfaces in Section~\ref{subsection_Branched}.

\subsection{Clique polynomials} 
\label{subsection_clique}

We review some results on clique polynomials, developed by McMullen \cite{McMullen2}. 
As we will see in Section~\ref{subsection_Clique}, 
clique polynomials are useful to compute the dilatations of pseudo-Anosov  monodromies of fibrations on  fibered $3$-manifolds.

Let $(\Gamma, m)$  be a finite, directed graph $\Gamma$ 
with a metric $m: E(\Gamma) \rightarrow {\Bbb R}_+$ on the set of edges $E(\Gamma)$.  
The metric $m$ specifies the length of each edge. 
Parallel edges and loops are allowed.  
We sometimes denote $(\Gamma,m)$  by $\Gamma$ 
when $m$ is obvious. 
The {\it growth rate} $\lambda(\Gamma,m)$  is defined by 
$$\lambda(\Gamma,m)= \lim_{T \to \infty} N_0(T)^{1/T},$$
where $N_0(T)$ is the number of the closed, directed paths in $\Gamma$ of length $\le T$. 

When $m(e)$ is an integer for each $e \in E(\Gamma)$, 
we can add $m(e)-1$ new vertices along each edge $e$ 
to obtain a new directed graph $\Gamma'$ with 
the metric ${\bf 1}:E(\Gamma') \rightarrow  {\Bbb R}_+$ 
sending each edge to $1$. 
Then we have 
$$\lambda(\Gamma, m)= \lambda(\Gamma', {\bf 1}).$$

Suppose that $\phi$ is a pseudo-Anosov mapping class. 
Let $\mathfrak{p}: \tau \rightarrow \tau$ be a train track representative of $\phi$ given in Theorem~\ref{thm_PaPe} 
and let $\Gamma_{\mathfrak{p}}$ be the induced directed graph of  $\mathfrak{p}: \tau \rightarrow \tau$. 
Theorem~\ref{thm_PaPe} implies that 
$ (\Gamma_{\mathfrak{p}}, {\bf 1})$  satisfies 
\begin{equation}
\label{equation_dilatation}
\lambda(\phi)= \lambda(\Gamma_{\mathfrak{p}}, {\bf 1}).
\end{equation}

Let $G$ be a finite, undirected graph with no loops or parallel edges.
Let $w: V(G) \rightarrow {\Bbb R}_+$ be a weight on the set of vertices $V(G)$. 
The subset $K \subset V(G)$ forms a {\it clique} if 
they span a complete subgraph. 
(We allow  $K =\emptyset$.)
The {\it clique polynomial} of $(G, w)$ is defined by 
$$Q(t)= \sum_{ K} (-1)^{(\sharp K)} t^{w(K)},$$ 
where $\sharp K$ denotes the cardinality of $K$, 
the weight of $K$ is given by 
$w(K)= \sum_{v \in K} w(v)$, and the sum is over all cliques $K$'s of $G$. 
We sometimes denote  the weighted, undirected graph $(G, w)$ by  $G$ 
when  $w$ is obvious.

McMullen relates the growth rates $\lambda(\Gamma,m)$'s to the clique polynomials 
via the {\it curve complexes} of  $(\Gamma,m)$'s. 
Let $C \subset E(\Gamma)$ be a collection of edges which form a closed, directed loop. 
If $C$ never visits the same vertex twice, then $C$ is called a {\it simple curve}. 
A {\it multicurve} is a finite union of simple curves such that no two simple curves share a vertex. 
The {\it curve complex} of $(\Gamma,m)$ is the undirected graph $G$ together with the weight $w: V(G) \rightarrow {\Bbb R}_+$, 
which is obtained by taking a vertex for each simple curve $C$ of $\Gamma$, 
and then joining the two vertices $C_1$ and $C_2$ by an edge when $(C_1, C_2)$ is a multicurve of $\Gamma$.  
Then the metric $m$ on $E(\Gamma)$ induces the weight $w$ on $V(G)$ as follows. 
$$w(C)= \sum_{e \in C} m(e).$$ 

\begin{thm}[\cite{McMullen2}]
\label{thm_McMullen}
Let $(G, w)$ be the  curve complex  of $(\Gamma,m)$. 
Then  $\tfrac{1}{\lambda(\Gamma,m)}$ is equal to the 
the smallest positive root of the clique polynomial $Q(t)$ of $(G, w)$. 
\end{thm}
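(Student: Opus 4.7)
The plan is to derive $Q(t)=\det(I-A(t))$ for an appropriate weighted transfer matrix $A(t)$, and then to use a standard transfer-matrix and Perron-Frobenius argument to identify $1/\lambda(\Gamma,m)$ with the smallest positive root of this determinant.

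First I would enumerate $V(\Gamma)=\{v_1,\dots,v_n\}$ and introduce the matrix $A(t)$ whose $(i,j)$ entry is $\sum_{e:\,v_j\to v_i}t^{m(e)}$. For small real $t>0$, the $(i,j)$ entry of $A(t)^k$ is the sum of $t^{\mathrm{length}}$ over directed walks of combinatorial length $k$ from $v_j$ to $v_i$, so $\operatorname{tr}\bigl((I-A(t))^{-1}\bigr)=\sum_{k\ge 0}\operatorname{tr}A(t)^k$ is the generating function enumerating closed directed walks weighted by total length. This series converges precisely for $t<1/\lambda(\Gamma,m)$, so its smallest positive singularity is $1/\lambda(\Gamma,m)$; being a pole of the resolvent, it is a zero of $\det(I-A(t))$.

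Next I would expand $\det(I-A(t))$ via the permutation sum. Each nonzero term picks, for every non-fixed index $i$ of a permutation $\sigma\in S_n$, an edge $e:v_{\sigma(i)}\to v_i$; the union of these chosen edges forms a vertex-disjoint collection of directed simple cycles in $\Gamma$, i.e.\ a multicurve $C$. A careful bookkeeping of signs shows that $\operatorname{sgn}(\sigma)$ combined with the $(-1)$'s from the diagonal expansion collapses to $(-1)^{c(C)}$, where $c(C)$ is the number of simple cycles of $C$, while the product of matrix entries yields $t^{w(C)}$. By definition of the curve complex $(G,w)$, multicurves in $\Gamma$ correspond exactly to cliques $K$ in $G$ (the empty multicurve matching the identity permutation), with $\sharp K=c(C)$ and $w(K)=w(C)$. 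Therefore
\begin{equation*}
\det(I-A(t))=\sum_K(-1)^{\sharp K}t^{w(K)}=Q(t),
\end{equation*}
and combining with the previous step yields the theorem.

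The step that will require the most care is the identification of the smallest positive singularity of the resolvent with $1/\lambda(\Gamma,m)$ when the weights $m(e)$ are not rational, since $A(t)$ is then not a polynomial matrix and the classical Perron-Frobenius theorem does not apply directly. I would handle this by approximating $m$ by rational (and, after rescaling, integer) metrics; in the integer case the subdivision reduction already noted in the paper brings one to the unit-weight setting, where Perron-Frobenius applies to an honest nonnegative integer adjacency matrix, and continuity of both $\lambda(\Gamma,m)$ and the roots of $Q(t)$ under perturbations of $m$ then delivers the general case.
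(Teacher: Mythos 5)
The paper does not prove this theorem; it is stated and invoked as a black box from McMullen's preprint \cite{McMullen2}, so there is no in-paper argument to compare against. Your transfer-matrix sketch is nevertheless essentially correct and is the natural route to the result: set $A(t)_{ij}=\sum_{e:\,v_j\to v_i}t^{m(e)}$, identify $1/\lambda(\Gamma,m)$ with the smallest positive singularity of the closed-walk generating function $\mathrm{tr}\bigl((I-A(t))^{-1}\bigr)$ (positivity of the coefficients makes the radius of convergence the first positive singularity), and expand $\det(I-A(t))$ over permutations to extract multicurves.

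Two refinements are worth recording. First, your determinant expansion as written only picks an edge at each non-fixed index of $\sigma$, but $\Gamma$ is allowed to have loops and a loop is a perfectly good simple curve; you must also expand each diagonal entry $1-A(t)_{ii}$. Done correctly, the nonzero contributions are indexed by a support set $U\subseteq V(\Gamma)$, a fixed-point-free-or-not permutation $\pi$ of $U$, and for each $i\in U$ a choice of edge $v_{\pi(i)}\to v_i$; these data correspond bijectively to multicurves of $\Gamma$, and the sign $\mathrm{sgn}(\pi)\cdot(-1)^{|U|}=(-1)^{c}$, with $c$ the number of disjoint cycles of $\pi$, is precisely $(-1)^{\sharp K}$ in the clique polynomial. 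Second, the closing worry about irrational metrics is unfounded, and the rational-approximation step is unnecessary. For each fixed real $t\in(0,1)$ the object $A(t)$ is a genuine nonnegative real matrix whether or not $m$ is rational, and the Perron--Frobenius theory applies to it directly: $t\mapsto\rho(A(t))$ is continuous and (once $\Gamma$ has a cycle) strictly increasing, $\det(I-A(t))\neq 0$ so long as $\rho(A(t))<1$, and the first $t_0\in(0,1)$ with $\rho(A(t_0))=1$ is simultaneously a zero of $\det(I-A(t))=Q(t)$ and, by the convergence argument you already gave, equal to $1/\lambda(\Gamma,m)$. Passing to integer weights and subdividing is not needed for this theorem (that reduction is useful elsewhere in the paper for relating metrics to the unit metric, but not here).
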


\noindent
By (\ref{equation_dilatation}) and Theorem~\ref{thm_McMullen}, 
we can compute the dilatations of pseudo-Anosovs by using the clique polynomials of curve complexes associated to the pseudo-Anosovs. 
We do not need to compute the characteristic polynomials of the incidence matrices for the dilatations. 
This observation is due to Birman~\cite{Birman}.

%
%

\subsection{Fibered classes of the magic manifold $N$}

In this section, we review some properties on $N$ which will be used in the paper. 
We give orientations of components, $K_1$, $K_2$ and $K_3$ of the $3$ chain link $\mathcal{C}_3$
as in Figure~\ref{fig_3braid}(1).  
Each component  bounds oriented $2$-punctured disks, 
$F_{\alpha}$, $F_{\beta}$ and $F_{\gamma}$ respectively. 
We set $\alpha= [F_{\alpha}]$, $\beta= [F_{\beta}]$, $\gamma= [F_{\gamma}] \in H_2(N, \partial N; {\Bbb Z})$. 
Then $\{\alpha, \beta, \gamma\}$ becomes a basis of $H_2(N, \partial N; {\Bbb Z})$. 
We denote the class $x\alpha+ y \beta+ z \gamma \in H_2(N, \partial N)$ by $(x,y,z)$. 
Thurston norm ball $U_N$ is the parallelepiped with vertices 
$ \pm \alpha = \pm (1,0,0)$, 
$\pm \beta = \pm (0,1,0)$, 
$\pm \gamma = \pm (0,0,1)$ and 
$\pm \alpha+ \beta+ \gamma = \pm (1,1,1)$
 (\cite[Example~3]{Thurston1}), 
see Figure~\ref{fig_Face_map}(1). 
(Note that the minimal representative of $\kappa$ 
 is taken to be the $3$-punctured sphere, embedded in $S^3 \setminus \mathcal{C}_3$, 
 which contains the point $\infty \in S^3 \setminus \mathcal{C}_3$.)

 Let $\mathrm{br}(\sigma_1^2 \sigma_2^{-1})$ be the $3$ components link in $S^3$ as in Figure~\ref{fig_3braid}(2), 
 i.e, it is the link obtained from the closed $3$-braid $\sigma_1^2 \sigma_2^{-1}$ together with the braid axis. 
 Then $N= S^3 \setminus C_3 $ is homeomorphic to $S^3 \setminus \mathrm{br}(\sigma_1^2 \sigma_2^{-1})$, 
 see Lemma~\ref{lem_summary}(1). 
 This implies that $N$ is a surface bundle over the circle 
 with a fiber of the $4$-punctured sphere. 
 Notice that every top dimensional face of $\partial U_N$ is a fibered face, because of the symmetries of $\mathcal{C}_3$. 
To study  monodromies of fibrations on $N$, we can pick a particular fibered face, for example 
the fibered face $\Delta$  with vertices 
$(1,0,0)$, $(1,1,1)$, $(0,1,0)$ and $(0,0,-1)$, see Figure~\ref{fig_Face_map}(1). 
The open face $int(\Delta)$  is written by 
\begin{eqnarray}
\label{equation_OpenFace}
int(\Delta)= \{(x,y,z)\ |\ x+y-z=1, \ x>0,\  y>0,\  x>z,\  y>z\}. 
\end{eqnarray}
Thurston norm $\|a\|$ of $a= (x,y,z) \in C_{\Delta}$ is given by $x+y-z$. 
An integral class $(x,y,z) \in C_{\Delta}$ is fibered (i.e, an integral class $(x,y,z)$ is in $ int(C_{\Delta)}$)
if and only if 
$x$, $y$ and $z$ are integers such that 
$x>0$, $y>0$,  $x>z$ and $y>z$, see (\ref{equation_OpenFace}).

Any class $a= (x,y,z) \in \Delta$ satisfies $z= x+ y-1$. 
Hence we can write such a class $a=(x,y,z)$ by $[x,y]$. 
Then we have 
$$int(\Delta)= \{[x,y]\ |\ 0 < x < 1,\ 0 < y < 1\},$$ 
see Figure~\ref{fig_Face_map}(2).

We denote by $T_{\alpha}$ the torus which is the boundary of a regular neighborhood of $K_1$. 
Let $a =(x,y,z) \in int(C_{\Delta})$ be a primitive integral class. 
We set $\partial_{\alpha} F_a= \partial F_a \cap T_{\alpha}$ 
which consists of the parallel simple closed curves on $T_{\alpha}$. 
We define $T_{\beta}$, $\partial_{\beta} F_a$ and $T_{\gamma}$, $\partial_{\gamma} F_a$ 
in a similar way.

\begin{lem}[\cite{KT} for (1)(3)(5), \cite{KT1} for (6)(7), \cite{KKT2} for (4)]
\label{lem_summary}
Suppose that  $a= (x,y,z) \in int(C_{\Delta})$ is a primitive integral class. 
\begin{itemize}

\item[(1)]
There is an orientation preserving homeomorphism 
$: S^3 \setminus \mathcal{C}_3 \rightarrow S^3 \setminus \mathrm{br}(\sigma_1^2 \sigma_2^{-1})$ 
which sends the minimal representative $F_{\alpha+ \beta}$ associated to $\alpha+ \beta$ to the oriented $3$-punctured disk 
bounded by the braid axis $K_3'$ as in Figure~\ref{fig_BraidHomeo}(4). 
Thus the pseudo-Anosov homeomorphism which represents the mapping class of $\mathrm{Mod}(\varSigma_{0,4})$ 
corresponding  to  $\sigma_1^2 \sigma_2^{-1}$ 
becomes the monodromy $\Phi_{\alpha+ \beta}: F_{\alpha+ \beta} \rightarrow F_{\alpha+ \beta}$ of the fibration 
associated to $\alpha+ \beta$.

\item[(2)]
The boundary slope of  $\partial_{\alpha} F_a$  
(resp. $\partial_{\beta} F_a$,  
$\partial_{\gamma} F_a$)  is given by 
$\tfrac{y+z}{-x}$  
(resp.  $ \tfrac{z+x}{-y}$,  
$ \tfrac{x+y}{-z}$).

\item[(3)] 
We have $\|a\|= x+y-z$. 
The number of the boundary components of $F_a$ 
is computed as follows. 
$$\sharp( \partial_{\alpha} F_{a}) = \gcd(x,y+z), \ \sharp (\partial_{\beta} F_{a} )=  \gcd(y,z+x),\ \sharp( \partial_{\gamma} F_{a}) = \gcd(z,x+y),$$
where $\gcd(0,w)$ is defined by $|w|$.  

\item[(4)]
Let $\Phi_{(x,y,z)}: F_{(x,y,z)} \rightarrow F_{(x,y,z)}$ be the monodromy  of the fibration on $N$ 
associated to $(x,y,z) \in int(C_{\Delta})$. 
Then $(y,x,z ) \in int(C_{\Delta})$, and 
$(\Phi_{(x,y,z)})^{-1}$  is conjugate to $\Phi_{(y,x,z)}$.

\item[(5)] 
The dilatation $\lambda(a) = \lambda_{(x,y,z)}$  is the largest root of  
$$ f_{(x,y,z)}(t)= t^{x+y-z}-t^x - t^y - t^{x-z}- t^{y-z}+1.$$

\item[(6)]
The (un)stable foliation  $\mathcal{F}_{a}$ of  $\Phi_a$ has a property such that 
each component  of $\partial_{\alpha} F_{a}$, $\partial_{\beta} F_{a}$ and  $\partial_{\gamma} F_{a}$ 
has $\tfrac{x}{\gcd(x,y+z)}$ prongs, $\tfrac{y}{\gcd(y,x+z)}$ prongs and $\tfrac{x+y-2z}{\gcd(z,x+y)}$ prongs respectively. 
 Moreover $\mathcal{F}_{a}$  does not have singularities in the interior of $F_{a} $.

\item[(7)]
$\mathcal{F}_a$ is an orientable foliation 
if and only if $x$ and $y$ are even and $z$ is odd. 
\end{itemize}
\end{lem}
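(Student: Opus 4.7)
The plan is to anchor everything to the single explicit fibration in (1) and then propagate information to an arbitrary fibered class $a\in int(C_\Delta)$ by combining linearity of the Thurston norm, symmetries of $\mathcal C_3$, and Fried's description (Theorem~\ref{thm_Fried2}) of $\Phi_a$ and its invariant foliations as first return data of the suspension flow. For (1), I would draw $\mathcal C_3$ and perform an explicit sequence of link isotopies that unlinks one component enough to be recognized as a braid axis encircling the other two; tracking how $K_1\cup K_2$ sits with respect to this axis identifies it with the closure of $\sigma_1^2\sigma_2^{-1}$, and the $3$-punctured disk bounded by the axis is, by construction, a minimal representative in class $\alpha+\beta$, whose first return under the braid fibration is exactly the mapping class of $\sigma_1^2\sigma_2^{-1}$. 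For (2)--(3), I would first record the pairwise linking numbers $\mathrm{lk}(K_i,K_j)=\pm 1$ read off from Figure~\ref{fig_3braid}(1) with the orientation convention of Figure~\ref{fig_3braid}(3), then compute $\partial F_a\cap T_\alpha$ homologically: the meridional coordinate on $T_\alpha$ records $x$ (the number of copies of the $\alpha$-disk capping $K_1$), while the longitudinal coordinate picks up the algebraic count $y+z$ from how $F_\beta,F_\gamma$ pierce $T_\alpha$; the slope is then $\tfrac{y+z}{-x}$, and the number of boundary curves is $\gcd$ of the two coordinates. The formula $\|a\|=x+y-z$ follows by checking $\|\cdot\|=1$ on the vertices of $\Delta$ (each is represented by a twice-punctured disk, $\chi_-=1$) and extending linearly. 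Part (4) is the involution of $\mathcal C_3$ swapping $K_1\leftrightarrow K_2$: the induced self-homeomorphism of $N$ sends $(x,y,z)\mapsto(y,x,z)$ and reverses the suspension-flow direction on every fibered class, which is exactly what is needed to conjugate $\Phi_{(x,y,z)}^{-1}$ to $\Phi_{(y,x,z)}$.

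Part (5) is the substantive dynamical step. My approach is through McMullen's Teichm\"uller polynomial $\Theta_\Delta\in\mathbb Z[H_1(N)/\mathrm{torsion}]$ for the face~$\Delta$: for any primitive fibered $a\in int(C_\Delta)$, specialising $\Theta_\Delta$ through the dual character $a^*\in H^1(N;\mathbb Z)$ produces a one-variable polynomial whose largest real root is $\lambda(a)$. I would compute $\Theta_\Delta$ by: first determining the invariant train track $\tau$ for $\Phi_{\alpha+\beta}$ directly from the braid $\sigma_1^2\sigma_2^{-1}$ on the $4$-punctured sphere (a finite combinatorial task on a small graph); then lifting to the maximal free-abelian cover of $N$ to read off the transition matrix with coefficients in the group ring of $H_1(N)/\mathrm{torsion}$; finally taking the characteristic polynomial and verifying that specialisation at $a=\alpha+\beta$ recovers the known braid dilatation polynomial. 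Reduction of the resulting $\Theta_\Delta$ under the specialisation $t\mapsto t^x,\,u\mapsto t^y,\,v\mapsto t^{-z}$ (or the appropriate combination in the chosen basis) produces $f_{(x,y,z)}(t)=t^{x+y-z}-t^x-t^y-t^{x-z}-t^{y-z}+1$. Part (6) is then obtained by combining (2)--(3) with the structure of the suspended foliation $\widehat{\mathcal F^u}$ along the cusp torus $T_\alpha$: $\widehat{\mathcal F^u}\cap T_\alpha$ is a linear foliation in a fixed slope (determined once and for all by $\Phi_{\alpha+\beta}$), and the number of prongs at a component of $\partial_\alpha F_a$ equals the geometric intersection of this slope with $\partial_\alpha F_a$ divided by $\sharp(\partial_\alpha F_a)$, which simplifies to $x/\gcd(x,y+z)$ and its analogues; the absence of interior singularities reduces to the fact that $\Phi_{\alpha+\beta}$ has none (a direct inspection on $\varSigma_{0,4}$) and that singular orbits of the suspension flow are preserved under choice of fibered class (Fried).

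For (7), orientability of $\mathcal F_a$ is equivalent to the existence of a global transverse orientation on the suspended foliation that restricts consistently to $F_a$; by the prong formula in (6), this requires each prong count (including the cyclic count around every singular orbit of the flow) to be even, and a direct bookkeeping on the three cusp contributions shows this is equivalent to $x,y$ even and $z$ odd. The \emph{main obstacle} is establishing (5): producing the Teichm\"uller polynomial $\Theta_\Delta$ and verifying the specialisation formula rigorously. All the other parts are homological or symmetry bookkeeping once (1) has been pinned down, but (5) requires either the full machinery of McMullen's Teichm\"uller polynomial applied to a computed invariant train track, or equivalently a direct train-track computation for a dense set of fibered classes plus analytic extension via Theorem~\ref{thm_Fried1}; both routes demand that the combinatorics of $\tau$ be carried through a noncommutative coefficient ring, which is where the bulk of the work lives.
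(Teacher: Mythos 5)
The paper proves only part (1) of this lemma in detail (parts (2)--(7) are cited to \cite{KT}, \cite{KT1}, \cite{KKT2}, and the paper notes that (2) is an easy computation), so the substantive comparison is with your argument for (1). There you propose to produce the homeomorphism $S^3\setminus\mathcal{C}_3\to S^3\setminus\mathrm{br}(\sigma_1^2\sigma_2^{-1})$ by ``an explicit sequence of link isotopies.'' This cannot work: the two links are not isotopic in $S^3$. For $\mathcal{C}_3$ the three pairwise linking numbers are all $\pm1$, whereas for $\mathrm{br}(\sigma_1^2\sigma_2^{-1})$ the braid axis $K_3'$ links the two-strand component of the closed braid twice, so the unordered multiset of $|\mathrm{lk}|$'s is $\{1,1,2\}$. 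Since linking numbers are invariants of link isotopy, no sequence of link isotopies can turn one into the other. The paper's argument is essential here: it cuts $S^3$ along the twice-punctured disk $F_\alpha$ bounded by $K_1$ and reglues after a full $360^\circ$ twist (a Rolfsen twist). This produces a genuinely different link in $S^3$ but an orientation-preserving homeomorphism of the complements, which is all that is needed. You would need to replace your isotopy step with this cut-and-reglue step, and then check that the resulting homeomorphism sends $F_{\alpha+\beta}$ to the disk bounded by the braid axis, as the paper does.

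The remaining items in your proposal do not correspond to arguments actually given in this paper, but they are broadly consistent with the cited sources. A few remarks: for (4), the conjugacy $(\Phi_{(x,y,z)})^{-1}\sim\Phi_{(y,x,z)}$ indeed comes from an involution of $\mathcal C_3$ swapping $K_1\leftrightarrow K_2$, but one must be careful about whether the induced map reverses the orientation of the fiber or of the flow, since either one alone converts a monodromy to its inverse; your sketch waves at this. For (5), your Teichm\"uller-polynomial route is the standard way to obtain $f_{(x,y,z)}(t)$ and is, to my knowledge, how \cite{KT} derives it; you correctly flag this as the place where the real computational work lives. For (6)--(7), reading prong counts and orientability off the cusp tori of the suspended foliation is reasonable and matches the approach in \cite{KT1}. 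So, apart from the genuine error in (1), the rest of the proposal is plausible as a roadmap, though it proves far more from scratch than the paper itself attempts.
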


\noindent
The proof of (2) is easy. 
For the convenience of the proof of Lemma~\ref{lem_MinRep}, 
we prove the claim (1).

\begin{proof}[Proof of Lemma~\ref{lem_summary}(1)] 
First of all, we observe that the link in Figure~\ref{fig_BraidHomeo}(2) is isotopic to ${\mathcal C}_3$ 
given in Figure~\ref{fig_BraidHomeo}(1). 
Observe also that the link in Figure~\ref{fig_BraidHomeo}(3) is isotopic to 
the braided link $\mathrm{br}(\sigma_1^2 \sigma_2^{-1})$ given in Figure~\ref{fig_BraidHomeo}(4). 
We use the link diagrams in (2) and (3). 
We cut the twice punctured disk ($\simeq F_{\alpha}$) bounded by the component $K_1$. 
Let $F_{\alpha}^1$ and $F_{\alpha}^2$ be the resulting twice punctured disks obtained from $F_{\alpha}$. 
We reglue these twice punctured disks twisting either $F_{\alpha}^1$ or $F_{\beta}^2$ by 360 degrees. 
Then we obtain the link in (3) which is isotopic to $\mathrm{br}(\sigma_1^2 \sigma_2^{-1})$. 
This implies that 
there exists an orientation preserving homeomorphism 
$h: S^3 \setminus {\mathcal C}_3 \rightarrow S^3 \setminus \mathrm{br}(\sigma_1^2 \sigma_2^{-1})$. 
Then one can check that $h$ sends the minimal representative of $\alpha+ \beta$ to the desired $3$-punctured disk. 
\end{proof}

\begin{center}
\begin{figure}
\includegraphics[width=5.3in]{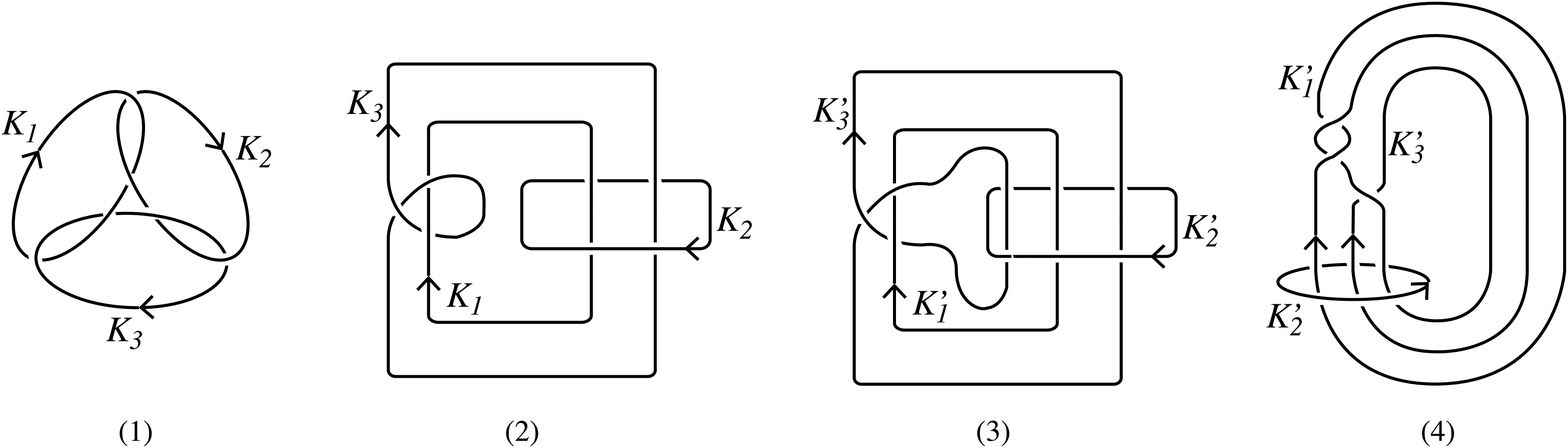}
\caption{(1)(2) $3$ chain link $\mathcal{C}_3$, (3)(4) Braided link $\mathrm{br}(\sigma_1^2 \sigma_2^{-1})$.}
\label{fig_BraidHomeo}
\end{figure}
\end{center}

Lemma~\ref{lem_summary}(4) allows us to 
focus on only fibered classes $(x,y,z) \in int(C_{\Delta})$ such that $y \ge x$ 
for the proof of Theorem~\ref{thm_algorithm}. 
We now  introduce two  bases 
$\{-\gamma, \beta, \alpha+ \beta \}$ and 
$\{ \alpha+ \beta+ \gamma, \beta, \alpha+ \beta \}$ 
of $H_2(N, \partial N, {\Bbb Z})$ to describe such fibered classes $(x,y,z)$. 
If $z \le 0$ (resp. $z \ge 0$), then we represent $(x,y,z)$ by the base 
$\{-\gamma, \beta, \alpha+ \beta \}$ (resp. $\{ \alpha+ \beta+ \gamma, \beta, \alpha+ \beta \}$ ). 
Let us define 
\begin{eqnarray*}
\Delta_{+}&=& \{(x,y,z)\in \Delta\ |\ y \ge x,\ z \ge 0\}, 
\\
\Delta_{-}&=& \{(x,y,z)\in \Delta\ |\ y \ge x,\ z\le 0\},  
\\
\Delta_0 &=& \{ (x,y,z)\in \Delta\ |\ y \ge x, \ z=0\}, 
\end{eqnarray*}
see Figure~\ref{fig_Face_map}(3)(4), 
and  define $\widehat{\Delta}_{\pm} \subset \Delta_{\pm}$ as follows. 
\begin{eqnarray*}
\widehat{\Delta}_+&=& \{(x,y,z) \in \Delta\ |\ x=y,\ z \ge 0 \}, 
\\
\widehat{\Delta}_-&=& \{(x,y,z) \in \Delta\ |\ x=y,\ z \le 0\}, 
\end{eqnarray*}
see Figure~\ref{fig_Face_map}(4). 
For non-negative integers $i,j$ and $k$,  
we define integral classes $(i,j,k)_{\pm}$, $(j,k)_0 \in C_{\Delta} $ to be 
\begin{eqnarray*}
(i,j,k)_+&=& i(1,1,1)+  j(0,1,0)+ k(1,1,0)= (i+k, i+j+k,i), 
\\
(i,j,k)_-&=& i(0,0,-1)+  j(0,1,0) + k(1,1,0)= (k, j+k, -i),
\\
(j,k)_0&=& (0,j,k)_+= (0,j,k)_-, 
\end{eqnarray*}
where 
$(1,1,1)= \alpha+ \beta+ \gamma$, $(0,0,-1)= - \gamma$, $(0,1,0)= \beta$, and $(1,1,0)= \alpha+ \beta$. 
The classes $(i,j,k)_{\pm}$ with $i,j,k >0$ are  said to be {\it non-degenerate}. 
Other classes $(i,j,k)_{\pm}$ are said to be {\it degenerate}. 
Note that an integral class $(i,j,k)_{\pm}$ is fibered  if and only if $i$, $j$ are non-negative integers and 
$k $ is a positive integer, see (\ref{equation_OpenFace}). 
If a fibered class $(x,y,z) \in int(C_{\Delta})$ satisfies $y \ge x$ and $z \le 0$ 
(resp. $y \ge x$ and $z \ge 0$), 
then $(x,y,z)$ is  written by $(i,j,k)_+$ (resp. $(i,j,k)_-$) for some $i,j\ge 0$ and $k \ge 1$.

We use the notations $f_{(i,j,k)_{\pm}}$ and $\lambda_{(i,j,k)_{\pm}}$ 
in the same manner as $f_{(x,y,z)}$ and $\lambda_{(x,y,z)}$ appeared in Lemma~\ref{lem_summary}. 
We denote by $[a]$, the projection of $a$ to the fibered face $\Delta$. 
For simplicity, 
we write $[(i,j,k)_{\pm}]= [i,j,k]_{\pm}$ and $[(j,k)_0]= [j,k]_0$. 
Clearly $[i,j,k]_{\pm}\in \Delta_{\pm}$ and $[j,k]_0 \in \Delta_0$. 
By claims (2)(3) in the following lemma, 
we find that coordinates $(i,j,k)_{\pm}$ are useful to study symmetries of the entropy function on $N$.

\begin{lem}
\label{lem_summary2} 
Let $a= (i,j,k)_{\pm} \in int(C_{\Delta})$ be a primitive integral class. 

\begin{itemize}

 \item[(1)]
 The dilatation $\lambda_{(i,j,k)_{\pm}}$ 
 is the largest root of 
 $$f_{(i,j,k)_{\pm}}(t)= t^{i+j+2k} - t^k - t^{i+k} -t^{j+k} - t^{i+j+k} +1.$$ 
 In particular $\lambda_{(i,j,k)_+}= \lambda_{(i,j,k)_-}$. 
 
 \item[(2)]
 The integral class $(j,i,k)_{\pm}$ is a fibered class in   $ int(C_{\Delta})$, and 
 the equality  $f_{(i,j,k)_{\pm}}= f_{(j,i,k)_{\pm}}$ holds. 
 In particular, $\lambda_{(i,j,k)_{\pm}}= \lambda_{(j,i,k)_{\pm}}$. 
 
 \item[(3)] 
Two classes $[i,j,k]_+$ and $[j,i,k]_+$ have a line symmetry about $x= \frac{1}{2}$, 
and 
$[i,j,k]_+$ and $[i,j,k]_-$ have a line symmetry about $y= -x+1$, 
see Figure~\ref{fig_Face_map}(5)(6). 
\end{itemize}
\end{lem}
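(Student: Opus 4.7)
The plan is to deduce all three claims directly from Lemma~\ref{lem_summary}, specifically parts (3) and (5), together with the explicit formulas defining the $(i,j,k)_{\pm}$ coordinates in terms of the basis $\{\alpha,\beta,\gamma\}$. No new pseudo-Anosov theory is needed; the proof is essentially bookkeeping.

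For part (1), I would unpack the two coordinate expressions: from the definitions, $(i,j,k)_+ = (i+k,\,i+j+k,\,i)$ and $(i,j,k)_- = (k,\,j+k,\,-i)$. Substituting $(x,y,z) = (i+k,\,i+j+k,\,i)$ into the polynomial $f_{(x,y,z)}(t) = t^{x+y-z} - t^x - t^y - t^{x-z} - t^{y-z} + 1$ from Lemma~\ref{lem_summary}(5), the five exponents become $i+j+2k,\; i+k,\; i+j+k,\; k,\; j+k$, which is exactly the claimed formula. Doing the same substitution for $(x,y,z) = (k,\,j+k,\,-i)$ yields the same five exponents (because negating $z$ swaps $\{x,y\}$ with $\{x-z,y-z\}$ in the exponent list), so $f_{(i,j,k)_+} = f_{(i,j,k)_-}$, and the equality of dilatations follows.

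For part (2), I would first check that $(j,i,k)_{\pm}$ lies in $int(C_{\Delta})$ using the description of the open cone from (\ref{equation_OpenFace}): for $(j,i,k)_+ = (j+k,\,i+j+k,\,j)$ one has $x = j+k > 0$, $y = i+j+k > 0$, $x-z = k > 0$, $y-z = i+k > 0$, so the class is fibered; similarly for $(j,i,k)_-$. The equality $f_{(i,j,k)_{\pm}} = f_{(j,i,k)_{\pm}}$ is then immediate by inspecting the polynomial from part (1), which is symmetric under $i \leftrightarrow j$, and hence $\lambda_{(i,j,k)_{\pm}} = \lambda_{(j,i,k)_{\pm}}$.

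For part (3), I would compute the projections to $\Delta$ explicitly by dividing by the Thurston norm $\|(i,j,k)_{\pm}\| = i+j+2k$ given by Lemma~\ref{lem_summary}(3). This yields $[i,j,k]_+ = \bigl[\tfrac{i+k}{i+j+2k},\,\tfrac{i+j+k}{i+j+2k}\bigr]$, $[j,i,k]_+ = \bigl[\tfrac{j+k}{i+j+2k},\,\tfrac{i+j+k}{i+j+2k}\bigr]$, and $[i,j,k]_- = \bigl[\tfrac{k}{i+j+2k},\,\tfrac{j+k}{i+j+2k}\bigr]$. The first two points share the same $y$-coordinate and their $x$-coordinates sum to $1$, giving the stated symmetry about $x = \tfrac{1}{2}$. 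For $[i,j,k]_+$ and $[i,j,k]_-$, one checks the map $(x,y)\mapsto(1-y,1-x)$ sends one to the other, which is exactly the reflection across the line $y=-x+1$.

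The only potential obstacle is keeping track of the two bases $\{-\gamma,\beta,\alpha+\beta\}$ and $\{\alpha+\beta+\gamma,\beta,\alpha+\beta\}$ and the sign convention on the third coordinate when $z \le 0$ versus $z \ge 0$. Once the Cartesian coordinates are written down cleanly from the definitions, each of (1), (2), (3) is a short computation with the formulas already supplied by Lemma~\ref{lem_summary}.
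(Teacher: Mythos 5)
Your proof is correct and takes essentially the same approach as the paper, which simply asserts that (1) follows from Lemma~\ref{lem_summary}(5), the first part of (2) from (\ref{equation_OpenFace}), and that (3) is easy to check. You have filled in the straightforward substitutions and projection computations that the paper's terse proof leaves implicit.
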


\begin{proof}
The claim (1) holds by Lemma~\ref{lem_summary}(5). 
The first part of (2) follows from (\ref{equation_OpenFace}). 
The second part of (2) is obvious from the claim (1).  
The proof of (3) is easy to check. 
(cf. Corollary~2.7 and Remark~2.8 in \cite{KKT2}.) 
\end{proof}

Although all fibered classes $(i,j,k)_{\pm}$ and $(j,i,k)_{\pm}$ have the same Thurston norm $i+j+ 2k$ and same dilatation, 
 the topological types of their fibers could be different in general. 
 To see what the pseudo-Anosovs $\Phi_{(i,j,k)_{\pm}}$ and $\Phi_{(j,i,k)_{\pm}}$ look like, 
 we will see the curve complexes associated to $(i,j,k)_{\pm}$ and $(j,i,k)_{\pm}$ 
  in Section~\ref{subsection_Clique}.

\begin{center}
\begin{figure}
\includegraphics[width=5in]{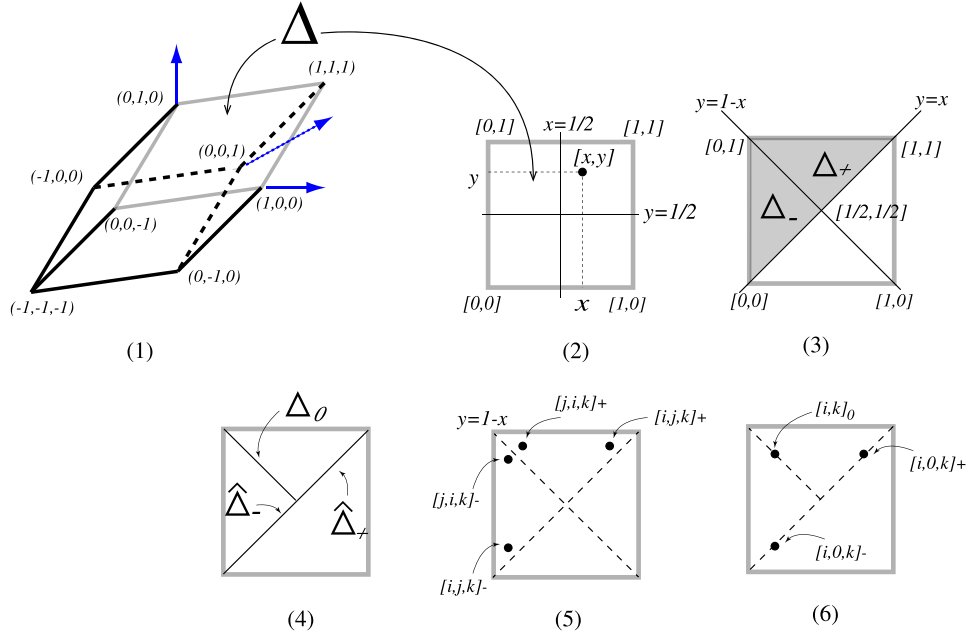}
\caption{(1) Thurston norm ball $U_N$  and the fibered face $\Delta$. 
(2) $[x,y ] \in \Delta$. 
(3)
$\Delta_{\pm}$. 
(4) $\Delta_0$, $\widehat{\Delta}_{\pm} $. 
(5) Projective classes of non-degenerate 
$[i,j,k]_{\pm}$ and $[j,i,k]_{\pm}$. 
(6) Projective classes of  degenerate 
$[i,0,k]_{\pm}$ and  $[i,k]_0= [0,i,k]_{\pm}$.}
\label{fig_Face_map}
\end{figure}
\end{center}

\section{Construction} 
\label{section_construction}

In Section~\ref{subsection_seed}, 
we construct the pseudo-Anosov homeomorphism on $\varSigma_{0,4}$ which represents the mapping class corresponding to 
the $3$-braid $\sigma_1^2 \sigma_2^{-1}$. 
It serves the  monodromy $\Phi_{\alpha+ \beta}: F_{\alpha+ \beta} \rightarrow F_{\alpha+ \beta}$ 
of the fibration on $N$ associated to $\alpha+ \beta$, and 
plays a rule as a pseudo-Anosov homeomorphism $\Phi$ in Theorem~\ref{thm_Fried2}. 
We also construct a train track representative 
$\mathfrak{p}_{\alpha+ \beta}: \tau_{\alpha+ \beta} \rightarrow \tau_{\alpha+ \beta}$ 
of $\phi_{\alpha+ \beta}= [\Phi_{\alpha+ \beta}]$.

Oertel used branched surfaces to describe Thurston norm and 
to study fibers of fibrations on hyperbolic, fibered $3$-manifolds.  
For basic definitions and results on  branched surfaces, see  \cite{LO,Oertel,Oertel2}. 
In Section~\ref{subsection_Minimal}, 
we find minimal representatives of non-fibered classes 
$\alpha$, $\beta$, $\kappa_{\pm}$, 
where $\kappa_+= \alpha+ \beta+ \gamma$ and 
$\kappa_-= -\gamma$. 
In Section~\ref{subsection_Branched}, by using minimal representatives found in Section~\ref{subsection_Minimal}, 
we build two branched surfaces $\mathcal{B}_{\pm}$ 
which carry fibers $F_{(i,j,k)_{\pm}}$ of fibrations associated to  any fibered class $(i,j,k)_{\pm}$. 
Then in Section~\ref{subsection_Train}, we construct the train track $\tau_{(i,j,k)_{\pm}}$ 
which carries the unstable foliation $\mathcal{F}_{(i,j,k)_{\pm}}$ 
of the pseudo-Anosov monodromy 
$\Phi_{(i,j,k)_{\pm}}: F_{(i,j,k)_{\pm}} \rightarrow F_{(i,j,k)_{\pm}}$ of the fibration  associated to $(i,j,k)_{\pm}$. 
In Section~\ref{subsection_Monodromies}, we construct  the  the pseudo-Anosov 
$\Phi_{(i,j,k)_{\pm}}: F_{(i,j,k)_{\pm}} \rightarrow F_{(i,j,k)_{\pm}}$ explicitly. 
In Section~\ref{subsection_TTmap}, we give an explicit construction of  the train track representative  
$\mathfrak{p}_{(i,j,k)_{\pm}}:  \tau_{(i,j,k)_{\pm}} \rightarrow \tau_{(i,j,k)_{\pm}}$ for  $\phi_{(i,j,k)_{\pm}}= [\Phi_{(i,j,i)_{\pm}}]$. 
We also construct 
the directed graph  $\Gamma_{(i,j,k)_{\pm}}$ induced by 
 $\mathfrak{p}_{(i,j,k)_{\pm}}: \tau_{(i,j,k)_{\pm}} \rightarrow \tau_{(i,j,k)_{\pm}}$ 
to indicate where each real edge of $\tau_{(i,j,k)_{\pm}}$  maps to under $\mathfrak{p}_{(i,j,k)_{\pm}}$.  
In Section~\ref{subsection_Clique}, we give the curve complex $G_{(i,j,k)_{\pm}}$ of $\Gamma_{(i,j,k)_{\pm}}$ and 
compute its clique polynomial $Q_{(i,j,k)_{\pm}}(t)$ whose 
largest root equals  the dilatation $\lambda_{(i,j,k)_{\pm}}$.

\subsection{Fibered class $\alpha+ \beta$}
\label{subsection_seed}

Let $L= L_{\mathcal{M}}: {\Bbb R}^2 \rightarrow {\Bbb R}^2$  be the linear map 
induced by 
$\mathcal{M}
=\left(
\begin{smallmatrix}
  3&2\\
  1&1
\end{smallmatrix}
\right) \in SL(2, {\Bbb Z})$. 
Since $\mathcal{M}$ has eigenvalues $\lambda^{\pm 1}(= 2 \pm \sqrt{3})$ 
with $\lambda^{+} >1$ and $0 < \lambda^{-1}< 1$, 
the linear map $L$ descends to the  Anosov diffeomorphism 
$f= f_{\mathcal{M}}: {\Bbb R}^2/ {\Bbb Z}^2 \rightarrow {\Bbb R}^2/ {\Bbb Z}^2$ on the torus. 
Figure~\ref{fig_seed} is an illustration of the image of the unit square 
with the origin (on the left bottom corner) under $L$. 

\begin{center}
\begin{figure}
\includegraphics[width=6in]{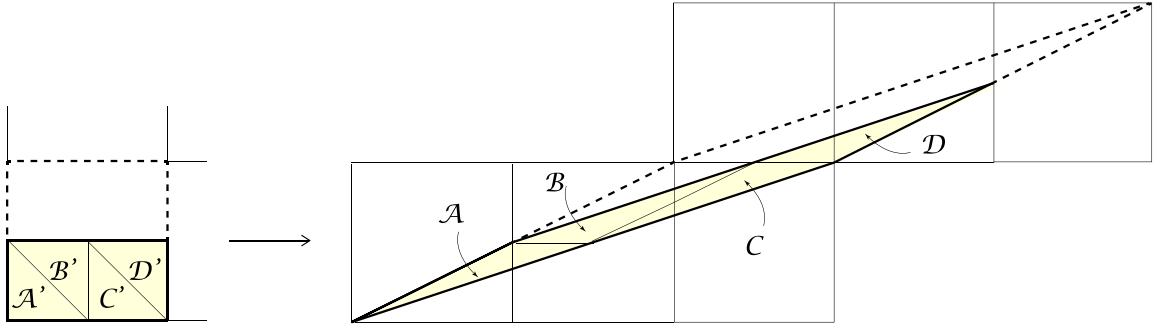}
\caption{
Rectangle $\mathcal{R}$ (left) and its image $L(\mathcal{R})$ (right). 
(The images of the isosceles right-angled triangles $\mathcal{A}'$, $\mathcal{B}'$, $\mathcal{C}'$ and $\mathcal{D}'$ 
under $L$ are the acute-angled triangles 
$\mathcal{A}$, $\mathcal{B}$, $\mathcal{C}$ and $\mathcal{D}$.)} 
\label{fig_seed}
\end{figure}
\end{center}

The linear map induced by  
$\left(
\begin{smallmatrix}
  -1&0\\
  0&-1
\end{smallmatrix}
\right) \in SL(2, {\Bbb Z})$
 defines a reflection 
$\mathfrak{r}: {\Bbb R}^2/ {\Bbb Z}^2 \rightarrow {\Bbb R}^2/ {\Bbb Z}^2$. 
The quotient of ${\Bbb R}^2/ {\Bbb Z}^2$ by $\mathfrak{r}$, denoted by  ${\Bbb S}$, 
is homeomorphic to a sphere. 
(${\Bbb S}$ is called a {\it pillowcase} because of its shape.) 
We set the points $a_i= (\frac{i}{2},0)$, $b_i = (\frac{i}{2}, \frac{1}{2}) \in {\Bbb R}^2$. 
Let $\mathcal{R} $ be the rectangle $a_0 a_2 b_2 b_0$ on $ {\Bbb R}^2$, see Figure~\ref{fig_rectangle}(1).  
Then ${\Bbb S}$ is obtained from $\mathcal{R}$ 
by identifying the three pairs of the oriented closed segments  
$\mathcal{K}=b_0b_1$ with $\mathcal{K}'=b_2 b_1$, 
$\mathcal{I}'= a_0b_0$ with $\mathcal{I}= a_2 b_2$, and 
$\mathcal{J}'= a_1 a_0$ with $\mathcal{J}= a_1 a_2$, 
see Figure~\ref{fig_rectangle}.  
If we let 
$\pi: {\Bbb R}^2 \rightarrow {\Bbb S}$ be the composition of the projections 
${\Bbb R}^2 \rightarrow {\Bbb R}^2/ {\Bbb Z}^2$ and ${\Bbb R}^2/ {\Bbb Z}^2 \rightarrow {\Bbb S}$, 
then the differentiable structure of ${\Bbb S}$  has the four singularities  
 $\mathfrak{b}_1= \pi(b_0)$,  $\mathfrak{b}_2= \pi(b_1)$, $\mathfrak{b}_3= \pi(a_1)$ 
 and $\mathfrak{b}_4= \pi(a_0)$ which lie on the corners of  ${\Bbb S}$. 
We set ${\Bbb B} = \{\mathfrak{b}_1, \mathfrak{b}_2, \mathfrak{b}_3, \mathfrak{b}_4\}$. 

\begin{center}
\begin{figure}
\includegraphics[width=3.5in]{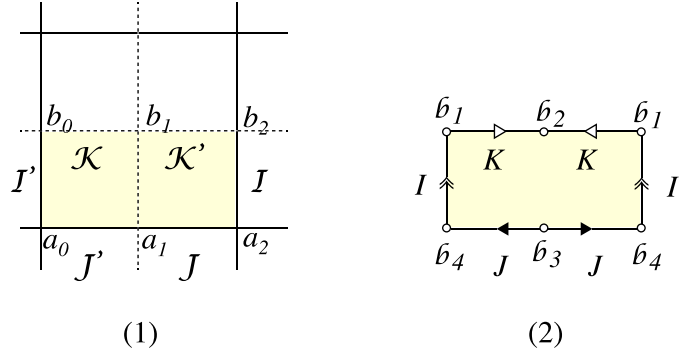}
\caption{(1)  $\mathcal{R}= a_0 a_2 b_2 b_0$. 
(2)  $4$-punctured sphere $ {\Bbb S} \setminus {\Bbb B}$.} 
\label{fig_rectangle}
\end{figure}
\end{center}

We have the identity  $f \circ \mathfrak{r} = \mathfrak{r} \circ f$, and this implies that 
$f: {\Bbb R}^2/ {\Bbb Z}^2 \rightarrow{\Bbb R}^2/ {\Bbb Z}^2 $ 
induces a homeomorphism  $\tilde{f}: {\Bbb S} \rightarrow {\Bbb S} $. 
Clearly ${\Bbb B}$ is invariant under $\tilde{f}$. 
(More concretely, 
$\tilde{f}(\mathfrak{b}_i)= \mathfrak{b}_i$ for $i= 1,4$, 
$\tilde{f}(\mathfrak{b}_2)=\mathfrak{b}_3$ and 
$\tilde{f}(\mathfrak{b}_3)=\mathfrak{b}_2$.) 
We see that 
the homeomorphism $\tilde{f}$ on ${\Bbb S}$ away from ${\Bbb B}$ inherits the (un)stable foliation of the Anosov $f$. 
Therefore, 
$f: {\Bbb R}^2/ {\Bbb Z}^2 \rightarrow{\Bbb R}^2/ {\Bbb Z}^2 $ 
induces  a pseudo-Anosov homeomorphism 
on $ {\Bbb S} \setminus {\Bbb B} \simeq  \varSigma_{0,4}$, 
see Figure~\ref{fig_rectangle}(2).  
Abusing the notation, we denote the pseudo-Anosov on ${\Bbb S} \setminus {\Bbb B} $ 
by the same notation $\tilde{f}$. 
Figure~\ref{fig_pApillow} is an illustration of $\tilde{f}: \varSigma_{0,4} \rightarrow \varSigma_{0,4}$. 
(cf. Figure~\ref{fig_seed}.) 
 Let $A'$, $B'$, $C'$ and $D'$ be isosceles right-angled triangles 
 whose vertices are punctures of $\varSigma_{0,4}$, see Figure~\ref{fig_pApillow}(left). 
 Their  images  under $\tilde{f}$, denoted by $A$, $B$, $C$ and $D$,  
 look as in Figure~\ref{fig_pApillow}(right).

Observe that 
the (un)stable foliation of $\tilde{f}$ has a $1$-pronged singularity at each puncture. 
If we regard the puncture $\mathfrak{b}_4$ as the boundary of the $3$-punctured disk, 
then the mapping class $[\tilde{f}]$ is written by a $3$-braid on the disk. 
Since  $\mathcal{M}$ is of the form 
$\left(
\begin{smallmatrix}
  1&1\\
  0&1
\end{smallmatrix}
\right)^2 
\left(
\begin{smallmatrix}
  1&0\\
  1&1
\end{smallmatrix}
\right),$
we see that 
$[\tilde{f}]$ is represented by the $3$-braid $ \sigma_1^2 \sigma_2^{-1}$ 
(reading the word from the right to the left). 
Equivalently $[\tilde{f}]$ is written by $(h_1)^2  \circ h_2^{-1}$, 
where $h_i$ denotes the mapping class which represents the positive half-twist 
about the segment $\mathfrak{b}_i\mathfrak{b}_{i+1}$, 
see Figure~\ref{fig_rectangle}(2).  

\begin{rem}
\label{rem_Homeo}
We have a natural homeomorphism 
$\tilde{h}:  \mathrm{br}(\sigma_1^2 \sigma_2^{-1}) \rightarrow {\Bbb T}_{[\tilde{f}]}$. 
The cusp of the component $K_2'$ (resp. $K_1'$) of the link $ \mathrm{br}(\sigma_1^2 \sigma_2^{-1})$ maps to (under $\tilde{h}$) 
the cusp corresponding to the orbit of $\mathfrak{b}_4$ (resp. $\mathfrak{b}_1$) of the suspension flow 
(see Figure~\ref{fig_3braid}(2)). 
The cusp of the component $K_3'$ maps to (under $\tilde{h}$) 
the cusp corresponding to the orbit $\mathfrak{b}_2$ (or $\mathfrak{b}_3$). 
\end{rem}

\noindent
By Lemma~\ref{lem_summary}(1) together with Remark~\ref{rem_Homeo}, 
we can regard $\tilde{f}: \varSigma_{0,4} \rightarrow \varSigma_{0,4}$
as the monodromy $\Phi_{\alpha+ \beta}: F_{\alpha+ \beta} \rightarrow F_{\alpha+ \beta}$  
of the fibration on $N$ associated to $\alpha+ \beta$. 
We denote $\tilde{f}$ by $\Phi_{\alpha+ \beta}$.

Next we turn to  a train track representative of $\phi_{\alpha+ \beta}= [\Phi_{\alpha+ \beta}]$. 
Let $\tau_{\alpha+ \beta}$ be a train track on $\varSigma_{0,4}$ as in Figure~\ref{fig_seedABCD}(1). 
Figures~\ref{fig_seedABCD}(1)(5) show that 
 $\tau_{\alpha+ \beta}$  is  invariant  under $\phi_{\alpha+ \beta}$. 
In fact, we have the image $\Phi_{\alpha+\beta}(\tau_{\alpha+ \beta})$  in Figure~\ref{fig_seedABCD}(2). 
(For the illustration of $\Phi_{\alpha+\beta}(\tau_{\alpha+ \beta})$, 
consider the image of the acute-angled triangle $B$, see Figure~\ref{fig_pApillow}(right).)
The train track $\Phi_{\alpha+ \beta}(\tau_{\alpha+ \beta})$ is 
isotopic to the one as  in Figure~\ref{fig_seedABCD}(5) 
which is carried by $\tau_{\alpha+ \beta}$.  
As a result, 
we get the desired train track representative 
$\mathfrak{p}_{\alpha+ \beta}: \tau_{\alpha+ \beta} \rightarrow \tau_{\alpha+ \beta}$ of $\phi_{\alpha+ \beta}$ 
whose  incidence  matrix of $\mathfrak{p}_{\alpha+ \beta}$ (with respect to the real edges $p$ and $q$) is equal to 
$\mathcal{M}= \left(
\begin{smallmatrix}
  3&2\\
  1&1
\end{smallmatrix}
\right)$.

In the rest of the paper, we consider the magic manifold $N$ of the form  
${\Bbb T}_{ \phi_{\alpha + \beta}} = \varSigma_{0,4} \times [0,1]/ \sim$, 
where $\sim$ identifies  $x \times \{1\}$ with $\Phi_{\alpha+ \beta}(x) \times \{1\}$ 
for each $x \in  \varSigma_{0,4} $. 
We investigate  the suspension flow $\Phi_{\alpha+ \beta}^t$ on ${\Bbb T}_{\phi_{\alpha + \beta}}$. 
We choose the orientation of ${\Bbb S} \setminus {\Bbb B}$ so that 
its  normal direction coincides with the flow direction of $\Phi_{\alpha+ \beta}^t$. 
We  fix the illustration of the $4$-punctured sphere ${\Bbb S} \setminus {\Bbb B}$ 
as in Figure~\ref{fig_rectangle}(2), but 
we often omit the names of the punctures $\mathfrak{b}_i$'s.

\begin{rem}
\label{rem_real-edge}
Loop edges of $\tau_{\alpha+ \beta}$ surrounding punctures 
$\mathfrak{b}_1$, $\mathfrak{b}_2$ are $\mathfrak{b}_3$ are infinitesimal edges 
for  $\mathfrak{p}_{\alpha+ \beta}: \tau_{\alpha+ \beta} \rightarrow \tau_{\alpha+ \beta}$. 
Other edges $p$ and $q$ are real edges. 
Each component of $\varSigma_{0,4} \setminus \tau_{\alpha+ \beta}$ is a once punctured monogon. 
(This comes from the fact that 
the (un)stable foliation of $\tilde{f}$ has a $1$-pronged singularity at each puncture.) 
In particular the component of $\varSigma_{0,4} \setminus \tau_{\alpha+ \beta}$ containing the puncture $\mathfrak{b}_4$ 
has exactly one cusp. 
\end{rem}

\begin{center}
\begin{figure}
\includegraphics[width=4in]{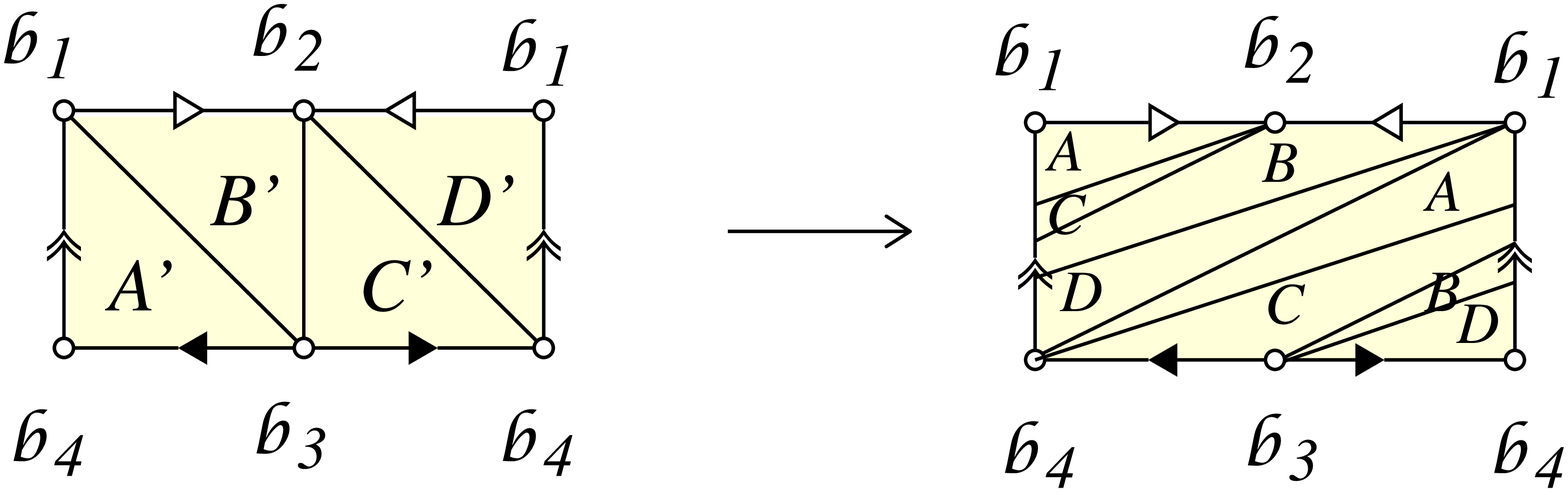}
\caption{$\tilde{f}(= \Phi_{\alpha+ \beta}): \varSigma_{0,4} \rightarrow \varSigma_{0,4}$. 
}
\label{fig_pApillow}
\end{figure}
\end{center}

\begin{center}
\begin{figure}
\includegraphics[width=4in]{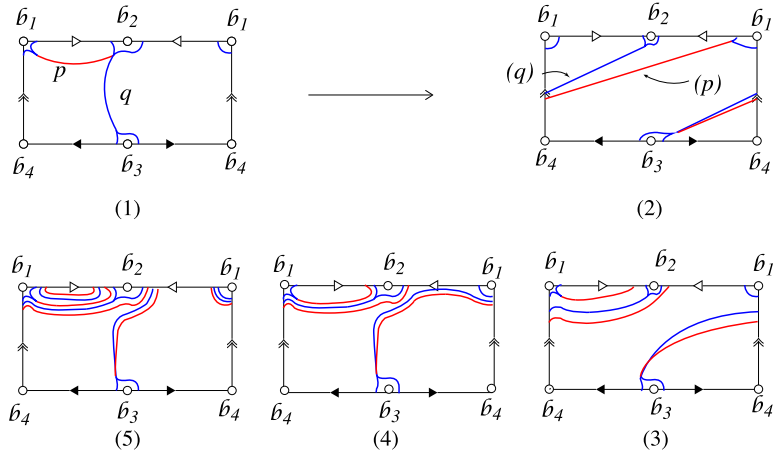}
\caption{(1) Train track $\tau_{\alpha+ \beta}$.  
(2)
The image of $\tau_{\alpha+ \beta}$ under $ \tilde{f}$, 
where the image of the edge $p$ etc.  is denote by $(p)$ etc. 
The image $\tilde{f}(\tau_{\alpha+ \beta})$ 
can be put in the tie neighborhood of $\tau_{\alpha+ \beta}$ under the isotopy, 
see  (2)$\rightarrow$(3)$\rightarrow$(4)$\rightarrow$(5). 
(From (4) $\rightarrow$ (5), 
the edges in (4) that are isotopic to 
the edges $(p)= \tilde{f}(p)$ and $(q)=\tilde{f}(q)$ in (2) cross over the segment $K$ 
(see Figure~\ref{fig_rectangle}(2) for $K$).} 
\label{fig_seedABCD}
\end{figure}
\end{center}

\subsection{Minimal representatives of $\alpha$, $\beta$, $-\gamma$ and $\alpha+ \beta+ \gamma$}
\label{subsection_Minimal}

First  we define several sets on ${\Bbb R}^2$ 
(see Figure~\ref{fig_parallel}(1)).   
Let $\mathcal{T}_{A}$ and $\mathcal{T}_{B}$
 be the triangles $ a_0 b_2 b_0$ and  $ a_0 a_2 b_2$ respectively. 
Let $\mathcal{P}_{K_-}$ and $\mathcal{P}_{K_+}$ 
be the parallelograms $a_0 a_1 b_4 b_3$ and $a_1 a_2 b_5 b_4$ respectively. 
See Figure~\ref{fig_parallel}(1).  
Note that 
$${\Bbb S}= \pi(\mathcal{R}) = \pi (\mathcal{T}_{A} \cup \mathcal{T}_{B}) 
= \pi (\mathcal{P}_{K_-} \cup \mathcal{P}_{K_+}),$$ 
where 
$\pi: {\Bbb R}^2 \rightarrow {\Bbb S}$ is the projection 
in Section~\ref{subsection_seed}. 
Let $\mathcal{U}$ be the oriented closed segment $a_0 b_2$. 
Let $\mathcal{V}$ and $\mathcal{W}$ be the oriented closed segments $b_3 a_0$ and $b_4 a_1$. 

\begin{center}
\begin{figure}
\includegraphics[width=5in]{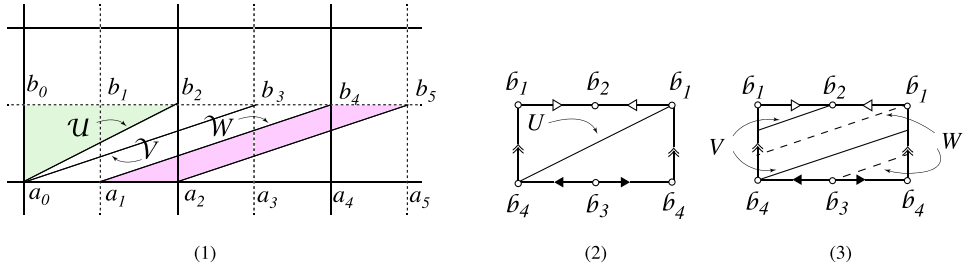}
\caption{(1) $\mathcal{T}_{A}= a_0 b_2 b_0$, $\mathcal{T}_{B}= a_0 a_2 b_2$, 
$\mathcal{P}_{K_+}= a_1 a_2 b_5 b_4$,   $\mathcal{P}_{K_-}= a_0 a_1 b_4 b_3$, 
$\mathcal{U}= a_0 b_2$, $\mathcal{V}= b_3 a_0$, $\mathcal{W}= b_4 a_1$. 
(2) $U$. (3) $V$, $W$.} 
\label{fig_parallel}
\end{figure}
\end{center}

\begin{center}
\begin{figure}
\includegraphics[width=4.5in]{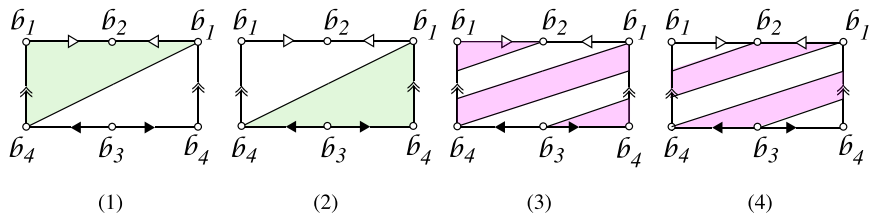}
\caption{(1) $T_{A}$. (2) $T_{B}$. (3) $P_{K_+}$. (4) $P_{K_-}$. }
\label{fig_triangle}
\end{figure}
\end{center}

Let $T_{A}$ be the image $\pi(\mathcal{T}_{A}) \subset {\Bbb S}$  removing all points in ${\Bbb B}$, 
i.e, 
$T_A= \pi(\mathcal{T}_{A}) \setminus {\Bbb B} $. 
We define $T_{B}$, $P_{K_+}$, $P_{K_-} \subset {\Bbb S} \setminus {\Bbb B}$ in the same manner. 
See Figure~\ref{fig_triangle}. 
Similarly, we let 
$$I= \pi(\mathcal{I}) \setminus {\Bbb B} = \pi(\mathcal{I}')  \setminus  {\Bbb B} .$$ 
Said differently, 
$I$ is obtained from $\pi(\mathcal{I})= \pi(\mathcal{I}') $ 
by removing the end points. 
We define $J, K, U, V,W \subset {\Bbb S} \setminus {\Bbb B}$ in the same manner. 
See Figures~\ref{fig_rectangle}(2) and \ref{fig_parallel}(2)(3). 
We choose the orientations of  $T_A$, $T_B$, $P_{K_{\pm}}$ 
which coincide with the ones  induced by the fiber $F_{\alpha+ \beta} = {\Bbb S} \setminus {\Bbb B}$ 
of the fibration associated to $\alpha+ \beta$.

\begin{rem}
\label{rem_ukappa}
We have $U \cap P_{K_+}= U$ and $U \cap P_{K_+}= \emptyset$, 
see Figures~\ref{fig_parallel}(2) and \ref{fig_triangle}(3)(4)). 
\end{rem}

We have $ \Phi_{\alpha+ \beta}(I)= U$. 
(We can check the equality by using Figure~\ref{fig_seed}.) 
This implies that 
$I^1= U^0$ in ${\Bbb T}_{\phi_{\alpha+ \beta}}$, 
and hence the flowband $[I^0, U^0]$ with respect to $\Phi_{\alpha+ \beta}^t$ is defined. 
Let 
\begin{eqnarray*}
F_A &=& T_{A}^0 \cup [I^0, U^0], \ \mbox{and}\ 
 \\
F_B &=& T_{B}^0 \cup  [I^0, U^0], 
\end{eqnarray*}
see Figure~\ref{fig_normOne}(1)(2). 
Observe that 
the both $F_A$ and $F_B$ are $3$-punctured spheres in  ${\Bbb T}_{\phi_{\alpha+ \beta}}$.

We find two more $3$-punctures spheres $F_{K_{\pm}}$ 
in ${\Bbb T}_{\phi_{\alpha+ \beta}}$. 
Note  that $\pi(\mathcal{J})= \pi(\mathcal{J}')$ and $\pi(\mathcal{K})= \pi(\mathcal{K}')$. 
We have that 
$\Phi_{\alpha+ \beta}(J)= V$ and 
$\Phi_{\alpha+ \beta}(K)= W$, 
because $\pi(L(\mathcal{J}))= \pi(\mathcal{V})$ 
and $\pi(L(\mathcal{K}))= \pi(\mathcal{W})$. 
Hence $J^1= V^0$ and $K^1= W^0$ in ${\Bbb T}_{\phi_{\alpha+ \beta}}$, 
and the flowbands $ [J^0, V^0] $ and $[K^0, W^0]$ are defined. 
Let
\begin{eqnarray*}
F_{K_+} &=& P_{K_+} \cup [J^0, V^0] \cup [K^0, W^0],\  \mbox{and}\ 
\\
F_{K_-} &=& P_{K_-} \cup [J^0, V^0] \cup [K^0, W^0], 
\end{eqnarray*}
see Figure~\ref{fig_normOne}(3)(4). 
We choose the orientations of  $F_A$, $F_B$ and $F_{K_{\pm}}$ 
so that they are extended by the orientations of $T_A^0$, $T_B^0$ and $P_{K_{\pm}}^0$. 
(Here we identify $T_A^0$ etc.  with $T_A$ etc.)

\begin{lem}
\label{lem_MinRep}
The $3$-punctures spheres $F_A$, $F_B$, $F_{K_+}$ and $F_{K_-}$ 
are  minimal representatives of $\alpha$, $\beta$, $\kappa_+(= \alpha+ \beta+ \gamma)$ and $\kappa_- (= - \gamma)$ respectively. 
\end{lem}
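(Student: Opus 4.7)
The plan is to verify $\chi_-(F_\bullet) = \|[F_\bullet]\|$ in each of the four cases. Since $\pm\alpha, \pm\beta, \pm\gamma, \pm(\alpha+\beta+\gamma)$ are the vertices of the parallelepiped $U_N$, we have $\|\alpha\| = \|\beta\| = \|\kappa_+\| = \|\kappa_-\| = 1$; hence once each $F_\bullet$ is shown to be an oriented $3$-punctured sphere in the claimed homology class, it is automatically a minimal representative.

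First I will check, directly from the construction, that each $F_\bullet$ is a properly embedded oriented $3$-punctured sphere in $N$. The pieces $T_A$, $T_B$, $P_{K_\pm}$ are closed triangular or parallelogram regions in the fiber $F_{\alpha+\beta}$, with vertices at the punctures $\mathbb{B}$. Since $\Phi_{\alpha+\beta}$ sends $I$, $J$, $K$ onto $U$, $V$, $W$ respectively, each flowband $[I^0, U^0]$, $[J^0, V^0]$, $[K^0, W^0]$ is a rectangular strip in $N$ whose two horizontal sides identify a pair of boundary arcs of a fiber piece via one period of the flow, while its two vertical sides lie on the cusp tori of $\partial N$. The orientations on $T_\bullet^0$ and $P_\bullet^0$ extend coherently across the flowbands via the flow direction. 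A direct inspection on the pillowcase model of $\varSigma_{0,4}$ (Figures~\ref{fig_rectangle}--\ref{fig_triangle}) shows that the resulting oriented surface has Euler characteristic $-1$ with exactly three boundary ends running into cusps of $\partial N$, hence is a $3$-punctured sphere.

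Second, I identify the homology classes via the boundary map. Since $N$ is the complement of the $3$-component link $\mathcal{C}_3$ in $S^3$, one computes $H_2(N;\mathbb{Z}) \cong \mathbb{Z}^2$, freely generated by any two of the boundary tori; so $H_2(\partial N;\mathbb{Z}) \to H_2(N;\mathbb{Z})$ is surjective, and the long exact sequence of the pair then gives that the connecting homomorphism $\partial : H_2(N,\partial N;\mathbb{Z}) \to H_1(\partial N;\mathbb{Z})$ is injective. It therefore suffices to compute $\partial[F_\bullet]$ on each cusp torus $T_\alpha, T_\beta, T_\gamma$ and match it with the boundary of the standard minimal representative of the claimed class: namely the Seifert-like $2$-punctured disks $F_\alpha$, $F_\beta$ bounded by $K_1$, $K_2$, and the $3$-punctured sphere in $S^3 \setminus \mathcal{C}_3$ containing $\infty$ for $\kappa_+$. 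These boundary data are read off directly from the pillowcase picture by using Remark~\ref{rem_Homeo} to identify which puncture of $\varSigma_{0,4}$ sits on which cusp torus, then tracking how the corners of $T_A, T_B, P_{K_\pm}$ and the vertical sides of the flowbands contribute to the boundary on each $T_\bullet$.

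The main technical obstacle is the careful bookkeeping of orientations and signs through the pillowcase identifications and across the flowbands, so as to distinguish $[F_{K_+}] = +\kappa_+$ from $-\kappa_+$ and $[F_{K_-}] = -\gamma$ from $+\gamma$, and to align everything with the meridian convention of Figure~\ref{fig_3braid}(3).
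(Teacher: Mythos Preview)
Your approach is correct but differs from the paper's. You identify the homology class of each $F_\bullet$ by invoking injectivity of the boundary map $\partial: H_2(N,\partial N) \to H_1(\partial N)$ and then matching boundary slopes on the cusp tori; the paper instead computes algebraic intersection numbers of $F_B$ and $F_{K_-}$ with three simple closed curves $S_1,S_2,S_3 \subset \mathbb{T}_{\phi_{\alpha+\beta}}$ that are the images of the meridians $m_1,m_2,m_3$ under the homeomorphism $S^3\setminus\mathcal{C}_3 \to \mathbb{T}_{\phi_{\alpha+\beta}}$. This dual approach determines $[F_B]=\beta$ and $[F_{K_-}]=-\gamma$ directly, and then the paper uses a cut-and-paste shortcut: since $F_A\cup F_B$ and $F_{K_+}\cup F_{K_-}$ each reassemble into the known fiber $F_{\alpha+\beta}$, the remaining classes $[F_A]=\alpha$ and $[F_{K_+}]=\alpha+\beta+\gamma$ follow by subtraction. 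The paper's route halves the homological computation and avoids tracking boundary slopes on three tori simultaneously; your route is more self-contained (no need to locate the $S_i$) but, as you acknowledge, carries heavier orientation bookkeeping. Both rest on the same observation that $\chi_-=1$ forces minimality once the class is identified.
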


\begin{proof}[Proof of Lemma~\ref{lem_MinRep}]
Let $m_1$, $m_2$ and $m_3$ be the meridians of the components $K_1$, $K_2$ and $K_3$ of the $3$ chain link $\mathcal{C}_3$. 
We take oriented simple closed curves $S_1$, $S_2$ and $S_3$ in ${\Bbb T}_{\phi_{\alpha+ \beta}} \simeq N$ as in Figure~\ref{fig_scc}. 
We observe that the images of $m_1$, $m_2$ and $m_3$ under the homeomorphism 
$H= \tilde{h} \circ h: S^3 \setminus \mathcal{C}_3 \rightarrow  {\Bbb T}_{\alpha+ \beta}$ 
are $S_1$, $S_2$ and $S_3$ respectively (up to isotopy).  

Now, we consider the intersections $i(\cdot, \cdot)$ 
between the surface 
$F_{B}$ and either $S_1$, $S_2$ or $S_3$. 
We have 
$i(F_B, S_1)= 0$, 
$i(F_B, S_2)= 1$, and 
$i(F_B, S_3)= 0$. 
These imply that $F_B$ is a minimal representative of $\beta$. 
By cut and past construction of the union of surfaces $F_A \cup F_B $, 
we obtain a surface which is homeomorphic to $F_{\alpha+\beta} = {\Bbb S} \setminus {\Bbb B}$. 
Since $\beta= [F_B]$ and $\alpha+ \beta= [F_{\alpha+ \beta}]$,  
we conclude that $F_A$ is a minimal representative of $\alpha$.

Let us consider the intersections between $F_{K_-}$ and either $S_1$, $S_2$, or $S_3$. 
We have 
$i(F_{K_-}, S_1)= 0$, 
$i(F_{K_-}, S_2)= 0$, and 
$i(F_{K_-}, S_3)= -1$. 
Thus we conclude that $F_{K_-}$ is a minimal representative of $-\gamma$. 
Then we see that $F_{K_+}$ is a minimal representative of $\alpha+ \beta+ \gamma$, 
because 
$\alpha+ \beta= [F_{\alpha+ \beta}]$ and 
$F_{\alpha+ \beta}  = {\Bbb S} \setminus {\Bbb B}$ can be obtained from  $F_{K_+} \cup F_{K-}$ by cut and past construction. 
\end{proof}

\noindent
By Lemma~\ref{lem_MinRep}, 
it makes sense to denote 
$ F_A$, $F_B$, $ F_{K_+}$ and $F_{K_-}$ 
by $F_{\alpha}$, $F_{\beta}$, $F_{\kappa_+}$ and $F_{\kappa_-}$ respectively.

 \begin{center}
\begin{figure}
\includegraphics[width=5.5in]{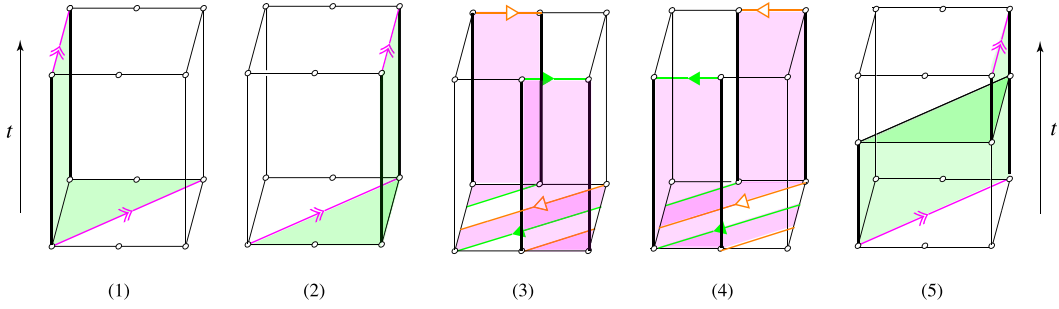}
\caption{(1)--(5) $3$-punctured spheres embedded in  ${\Bbb T}_{\phi_{\alpha+ \beta}}$ 
with the suspension flow $\Phi_{\alpha+\beta}^t$. 
The top and bottom surfaces are $\varSigma_{0,4} \times \{1\}$ and $\varSigma_{0,4} \times \{0\}$ respectively. 
They are identified by $\Phi_{\alpha+ \beta}$. 
Two edges with the same kind (red, green, yellow) 
are identified in  ${\Bbb T}_{\phi_{\alpha+ \beta}}$.
The vertical arrows with the labeling $t$ indicate the flow directions. 
(1) $F_{\alpha} = F_A$. 
(2) $F_{\beta} = F_B$. 
(3) $F_{\kappa_+}= F_{K_+}$. 
(4) $F_{\kappa_-}= F_{K_-}$. 
(5) $F_{\beta}^t$ for some $0 < t<1$.} 
\label{fig_normOne}
\end{figure}
\end{center}

\begin{center}
\begin{figure}
\includegraphics[width=2.5in]{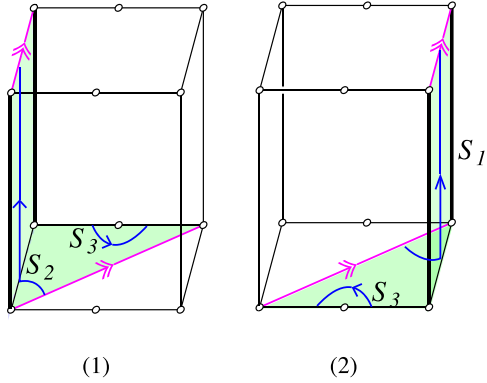}
\caption{(1) $S_2$, $S_3$ and $F_{\alpha}$ in ${\Bbb T}_{\phi_{\alpha+ \beta}}$. 
(2) $S_1$, $S_3$ and $F_{\beta}$ in ${\Bbb T}_{\phi_{\alpha+ \beta}}$.}
\label{fig_scc}
\end{figure}
\end{center}

 In the end of this subsection, we introduce surfaces $F_{\beta}^t$. 
 We denote by $F_{\beta}^t$, the oriented surface  in ${\Bbb T}_{\phi_{\alpha+ \beta}}$ 
 which is obtained from $F_{\beta}$  by pushing $F_{\beta}^0$ along the flow direction for $t \ge 0$ times, 
 see Figure~\ref{fig_normOne}(5). 
 Clearly $\beta= [F_{\beta}^t]$. 
  In the same manner we can define other surfaces 
 $F_{\kappa_{\pm}}^t$ and  $F_{\alpha+ \beta}^t$.

\subsection{Branched surfaces which carry fibers $F_{(i,j,k)_{\pm}}$} 
\label{subsection_Branched}

We first construct the branched surface $\mathcal{B}_+$. 
We choose $\delta, \epsilon>0$ so that 
$0 < \delta < 2 \delta < 1 - \epsilon$. 
We consider surfaces 
$F_{\kappa_+}^{\delta}$, $F_{\beta}^{2 \delta}$, $F_{\alpha+ \beta}^{1 - \epsilon} \subset  {\Bbb T}_{\phi_{\alpha+ \beta}}$. 
We have $\kappa_+= [F_{\kappa_+}^{\delta}]$, $\beta= [F_{\beta}^{2 \delta}]$ and 
$\alpha+ \beta= [F_{\alpha+ \beta}^{1- \epsilon}]$. 
Let  
\begin{equation}
\label{equation_pre-Bpulus}
\widehat{\mathcal{B}}_+=
F_{\kappa_+}^{\delta} \cup  F_{\beta}^{2 \delta} \cup F_{\alpha+ \beta}^{1- \epsilon} \subset {\Bbb T}_{\phi_{\alpha+ \beta}}.
\end{equation}
The intersection of  each pair of the three surfaces is as follows. 
\begin{enumerate}
\item[$(1_+)$] 
$F_{\alpha+ \beta}^{1 -\epsilon} \cap  F_{\beta}^{2 \delta} = I^{1- \epsilon}$, 

\item[$(2_+)$] 
$F_{\alpha+ \beta}^{1- \epsilon} \cap  F_{\kappa_+}^{\delta} = J^{1- \epsilon} \cup K^{1- \epsilon}$, 

\item[$(3_+)$] 
$F_{\beta}^{2 \delta} \cap F_{\kappa_+}^{\delta} = J^{2 \delta} \cup U^{\delta}$. 
(See Remark~\ref{rem_ukappa} and Figure~\ref{fig_normOne}(3)(5).)  
\end{enumerate}
The local picture near each intersection looks as in Figure~\ref{fig_branchedsurface}(1). 
A common property of the pairs is that 
locally, a surface near the intersection is parallel to $F_{\alpha+ \beta}$, 
and  it intersects with the  flowband 
which is the subset of the other surface (in fact $F_{\kappa_+}^{\delta}$ or $ F_{\beta}^{2 \delta}$). 
The branched surface $\mathcal{B}_+$ can be obtained  from $\widehat{\mathcal{B}}_+$ 
by modifying each flowband of $F_{\kappa_+}^{\epsilon}$ and $ F_{\beta}^{2 \epsilon}$ 
as in Figure~\ref{fig_branchedsurface}(2) 
so that the modifications agree with the orientations of the two surfaces. 
Each point in the intersection of surfaces belongs to the {\it branched locus} of $\mathcal{B}_+$ 
(that is, the union of points of the branched surface none of whose neighborhood are manifolds).

We build  the branched surface $\mathcal{B}_-$ in the same manner: 
Take $F_{\kappa_-}^{\delta}$ which represents $\kappa_-$ instead of $F_{\kappa_+}^{\delta}$. 
Let 
\begin{equation}
\label{equation_pre-Bminus}
\widehat{\mathcal{B}}_-=
F_{\kappa_-}^{\delta} \cup  F_{\beta}^{2 \delta} \cup F_{\alpha+ \beta}^{1- \epsilon} \subset {\Bbb T}_{\phi_{\alpha+ \beta}}.
\end{equation}
We have 
\begin{enumerate}
\item[$(1_-)$]
$F_{\alpha+ \beta}^{1- \epsilon} \cap  F_{\beta}^{2 \delta} = I^{1- \epsilon}$, 

\item[$(2_-)$]
$F_{\alpha+ \beta}^{1- \epsilon} \cap  F_{\kappa_-}^{\delta} = J^{1- \epsilon} \cup K^{1- \epsilon}$, 
 
\item[$(3_-)$]
$F_{\beta}^{2 \delta} \cap F_{\kappa_-}^{\delta} = J^{2 \delta}$. 
(See Remark~\ref{rem_ukappa} and Figure~\ref{fig_normOne}(4)(5).)  
\end{enumerate}
The branched surface $\mathcal{B}_-$ is obtained from $\widehat{\mathcal{B}}_-$ 
by modifying each flowband of $F_{\kappa_-}^{\epsilon}$ and $ F_{\beta}^{2 \epsilon}$ 
in the similar manner as in the construction of $\mathcal{B}_+$.

Figure~\ref{fig_identify}(1) (resp. Figure~\ref{fig_identify}(3)) 
illustrates   three pieces (bottom, middle, top) 
for building $\mathcal{B}_+$ (resp. $\mathcal{B}_- $). 
In this  figure, we have 
two kinds (solid/broken) of  segments without arrows and 
two kinds (solid/broken) of segments with arrows. 
The broken segments without arrows are parts of the orbits of punctures 
$\mathfrak{b}_1$, $\mathfrak{b}_2$, $\mathfrak{b}_3$ and $ \mathfrak{b}_4$ 
(which are circles in the figure)  under the flow. 
In other words, they lie on the cusps of ${\Bbb T}_{\phi_{\alpha+ \beta}} \simeq N$. 
We now explain our convention of Figure~\ref{fig_identify}(1)(3). 
Firstly,   
there are some pairs of  segments with the same labeling. 
(Exceptionally, the three segments in Figure~\ref{fig_identify}(1) have the labeling L6.) 
The two segments with the same labeling mean that 
one of them is connected to the other with respect to the flow, 
see Definition~\ref{def_flowband}.  
For the exceptional labeling L6 in Figure~\ref{fig_identify}(1), 
the bottom segment with the labeling L6 is connected to the middle segment with L6, and 
the top segment with L6 is connected to the bottom segment with the same labeling. 
Secondly, we also identify the segment having the labeling $L*$ with the segment having the labeling $L*'$. 
The resultant belongs to the branched locus. 
(For example,  the  segment with the labeling $L4'$ and the two segments with the labeling $L4$ 
are identified, and the resultant segment belongs to the branched locus.)  
Lastly, 
we can obtain the whole pictures of $\mathcal{B}_{\pm}$ 
if we insert a suitable flowband between every  two segments with the same labeling. 
For example, the bottom segment with the labeling $L4$ is connected to the top segment with the same labeling $L4$. 
We insert a suitable flowband (of the form $[K^{1/3 + \epsilon_0}, K^{1 - \epsilon}]$) between them. 
Also the top segment with the labeling $5$ is connected to the bottom segment with the same labeling $5$. 
Thus we insert the flowband 
$[K^1, W^{\delta}]= [W^0, W^{\delta}]$  between them. 
(Note that $K^1 = W^0$ in ${\Bbb T}_{\phi_{\alpha+ \beta}}$.)
Under the identification of segments and inserting suitable flowbands, 
each polygon bounded by the solid segments becomes a {\it sector} of the branched surface. 
In general, the {\it sectors} of the branched surface $\mathcal{B}$ are the closures in $\mathcal{B}$ 
of the components of $\mathcal{B} \setminus (\mbox{the\ branched\ locus\ of}\  \mathcal{B}$).

We turn to find surfaces carried by $\mathcal{B}_{\pm}$. 
To do this, 
given a fibered class $(i,j,k)_{\pm}$ (hence $i \ge 0$, $j \ge 0$ and $k \ge 1$), 
we assign  these integers $i $, $j $ and $k$ 
for the sectors of $\mathcal{B}_{+}$ (resp. $\mathcal{B}_{-}$)
 as in Figure~\ref{fig_identify}(2) (resp. Figure~\ref{fig_identify}(4)).  
This  is a natural assignment, which we explain the reason now. 
We assign  integers $i \ge 0 $, $j \ge 0$ and $k \ge 1$  to 
$F_{\kappa_+}^{\delta}$, $F_{\beta}^{2 \delta}$ and  $F_{\alpha+ \beta}^{1- \epsilon}$ 
(resp. $F_{\kappa_-}^{\delta}$, $F_{\beta}^{2 \delta}$ and $F_{\alpha+ \beta}^{1- \epsilon}$) 
consisting of $\widehat{\mathcal{B}}_{+}$ (resp. $\widehat{\mathcal{B}}_{-}$). 
Then we reconstruct $\mathcal{B}_{+}$ (resp. $\mathcal{B}_{-}$)  with the integers assigned. 
What we obtain is the assignment of integers in question. 
Then  the branched surface $\mathcal{B}_{\pm}$ enjoys the branch equations of a particular type 
such that $X= X'$, $Y= Y'$ and $(X,Y) \in \{(i,j), (j,i), (k,i), (k,j)\}$ 
as in Figure~\ref{fig_branchedsurface_1d}(1). 
Thus this assignment  determines a surface $ S_{(i,j,k)_{\pm}}$ 
which is carried by $\mathcal{B}_{\pm}$. 
(See the illustration of Figure~\ref{fig_branchedsurface_1d}(3) which shows the surface induced by some branch equation.) 
Said differently, 
$S_{(i,j,k)_{\pm}}$ is obtained from the union 
$$\widehat{S}_{(i,j,k)_{\pm}}= (i \ \mbox{parallel\ copies\ of\ } F_{\kappa_{\pm}}) \cup 
(j \ \mbox{parallel\ copies\ of\ } F_{\beta}) \cup (k \ \mbox{parallel\ copies\ of\ } F_{\alpha+ \beta})$$ 
by cut and past construction of surfaces (cf. Figure~\ref{fig_branchedsurface}). 
Therefore $ (i,j,k)_{\pm}= [S_{(i,j,k)_{\pm}}]$.

\begin{center}
\begin{figure}
\includegraphics[width=3in]{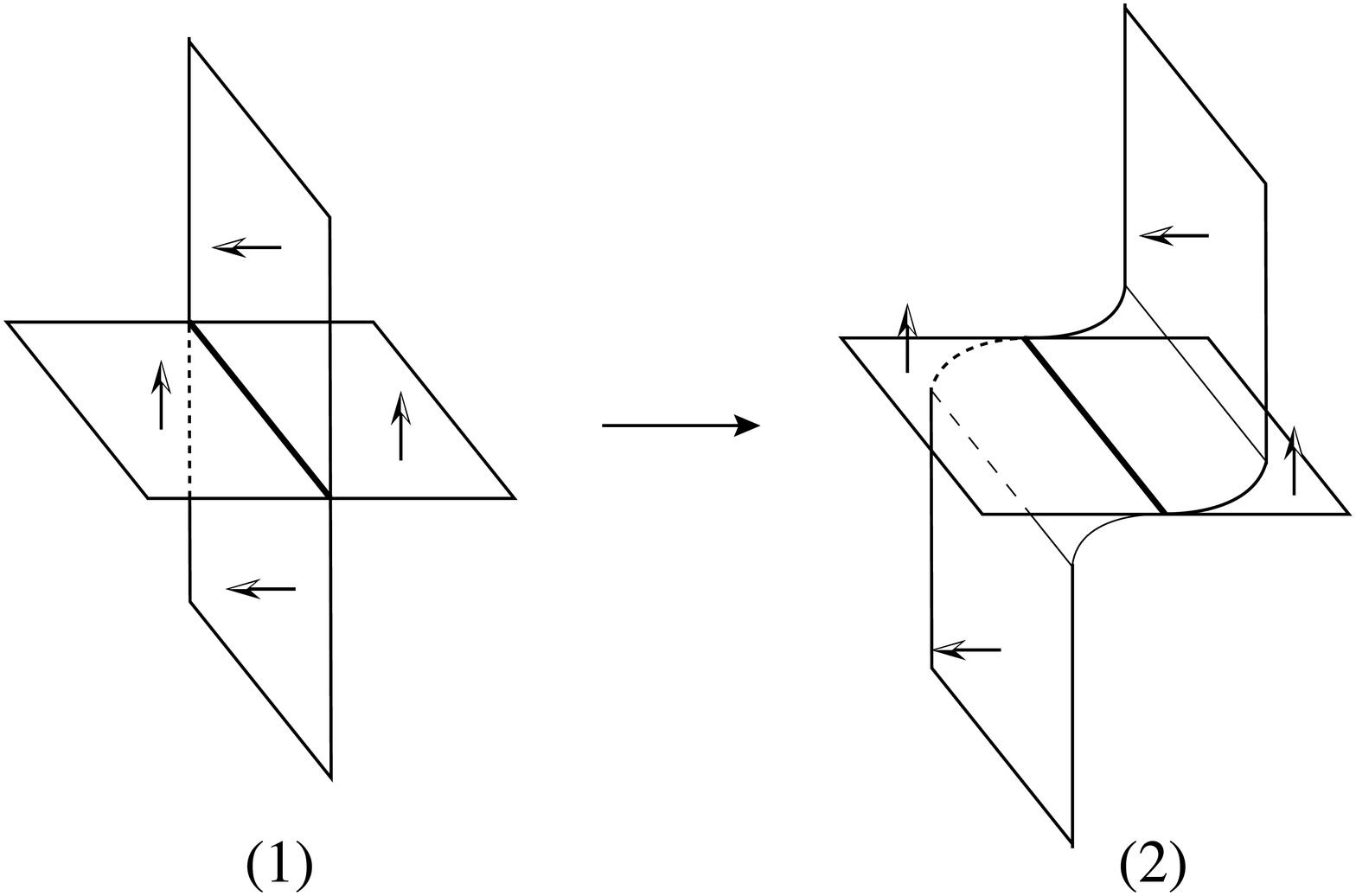}
\caption{(1) Near the intersection of the oriented surfaces
(Arrows shows the normal directions). 
(2) Modification near the intersection 
which agrees with the orientations of two surfaces.}
\label{fig_branchedsurface}
\end{figure}
\end{center}

\begin{center}
\begin{figure}
\includegraphics[width=4in]{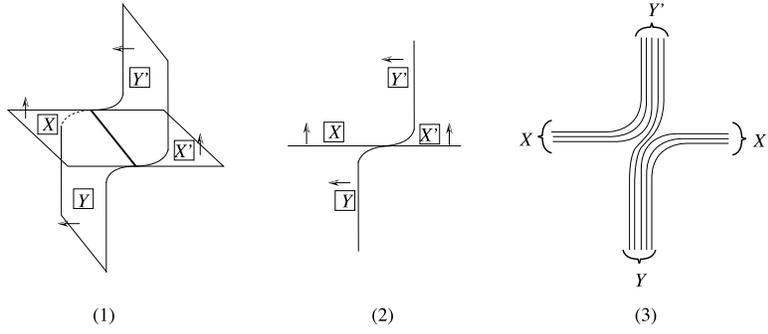}
\caption{(1) Branch equation $X+Y= Y'+X'$. 
(2) Side view of (1) in this figure.  
(3) Side view of the surface induced by the branch equation. 
(In this case $X= X'=3$, $Y= Y'=5$.)}
\label{fig_branchedsurface_1d}
\end{figure}
\end{center}

\begin{center}
\begin{figure}
\includegraphics[width=5in]{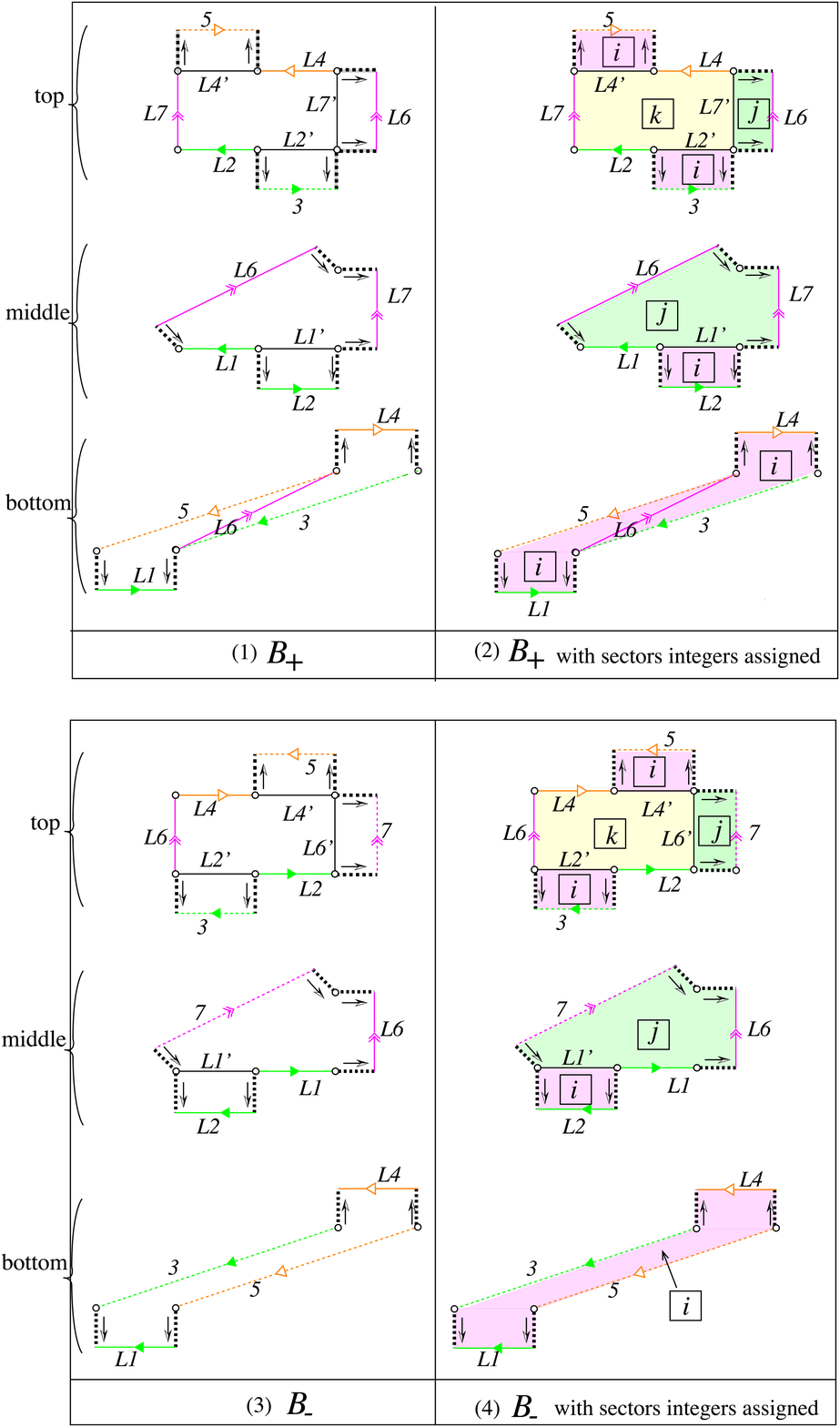} 
\caption{
(1) Three pieces for building $\mathcal{B}_+$. 
(2) Assignment of sectors of  $\mathcal{B}_+$. 
(3) Three pieces for building $\mathcal{B}_-$. 
(4) Assignment of sectors of  $\mathcal{B}_-$. 
The types of arrows (red, green, yellow) are compatible with the ones illustrated in Figure~\ref{fig_normOne}.}  
\label{fig_identify}
\end{figure}
\end{center}

\begin{lem}
\label{lem_fiber}
The surface $S_{(i,j,k)_{\pm}}$ 
is the minimal representative of   the fibered class $(i,j,k)_{\pm}$.  
\end{lem}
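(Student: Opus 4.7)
The plan is to show the equality $\chi_-(S_{(i,j,k)_{\pm}}) = \|(i,j,k)_{\pm}\|$, which by the definition of the Thurston norm forces $S_{(i,j,k)_{\pm}}$ to be a minimal representative. From the construction we already have $[S_{(i,j,k)_{\pm}}] = (i,j,k)_{\pm}$, so only the comparison of these two quantities remains.

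First I would compute the Thurston norm using Lemma~\ref{lem_summary}(3). Writing $(i,j,k)_+ = (i+k,\,i+j+k,\,i)$ and $(i,j,k)_- = (k,\,j+k,\,-i)$ in $(x,y,z)$-coordinates, both classes lie in $C_{\Delta}$ and give $\|(i,j,k)_{\pm}\| = x+y-z = i+j+2k$. Next, since $F_{\kappa_{\pm}}$ and $F_{\beta}$ are $3$-punctured spheres (Euler characteristic $-1$) and $F_{\alpha+\beta}$ is a $4$-punctured sphere (Euler characteristic $-2$), and since cut-and-paste at transverse double curves preserves Euler characteristic, I obtain
$$\chi(S_{(i,j,k)_{\pm}}) \;=\; i\,\chi(F_{\kappa_{\pm}}) + j\,\chi(F_{\beta}) + k\,\chi(F_{\alpha+\beta}) \;=\; -(i+j+2k).$$

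The hardest step will be to upgrade this Euler characteristic count to the equality $\chi_-(S_{(i,j,k)_{\pm}}) = -\chi(S_{(i,j,k)_{\pm}})$, which requires ruling out sphere and disk components in $S_{(i,j,k)_{\pm}}$. For this I would note that each constituent piece $F_{\kappa_{\pm}}$, $F_{\beta}$, $F_{\alpha+\beta}$ is an incompressible minimal representative of a non-zero class in the hyperbolic (hence irreducible and $\partial$-irreducible) manifold $N$, and that the intersection arcs $I$, $J$, $K$, $U$ along which the cut-and-paste is performed are essential arcs in each constituent (they run between distinct cusps of punctured spheres). A standard cut-and-paste argument then shows that $S_{(i,j,k)_{\pm}}$ is itself incompressible in $N$, so in particular it contains no sphere or disk components. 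Combining this with the Thurston norm and Euler characteristic computations gives $\chi_-(S_{(i,j,k)_{\pm}}) = i+j+2k = \|(i,j,k)_{\pm}\|$, and the lemma follows.
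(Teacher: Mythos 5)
Your proposal follows the same strategy the paper uses: appeal to linearity of the Thurston norm on the cone over $\Delta$, observe that cut-and-paste preserves Euler characteristic so $\chi(S_{(i,j,k)_{\pm}}) = -(i+j+2k)$, and conclude minimality once sphere and disk components are excluded. The first two steps are correct and make explicit what the paper compresses into the single sentence ``The fibered class $(i,j,k)_{\pm}$ is in the cone over $\Delta_{\pm}$, and $S_{(i,j,k)_{\pm}}$ is built from \dots\ parallel copies of minimal representatives \dots\ by cut and paste.''

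The gap is in your third step. You assert that ``a standard cut-and-paste argument'' shows $S_{(i,j,k)_{\pm}}$ is incompressible because the constituent pieces are incompressible and the intersection arcs $I,J,K,U$ are essential. There is no such general theorem: the oriented sum of incompressible surfaces need not be incompressible, even when the intersection locus consists of essential arcs, so incompressibility of $S_{(i,j,k)_{\pm}}$ cannot simply be inherited from the pieces. (In fact, incompressibility of a norm-minimizing surface is usually a \emph{consequence} of minimality, not the other way round, so invoking it here risks circularity.) What you actually need is the weaker statement that $S_{(i,j,k)_{\pm}}$ has no sphere or disk components, and the honest routes to it are either (i) Thurston's argument that among norm-minimizing representatives one may minimize the number of intersection arcs, after which no inessential piece survives the oriented sum, or (ii) the Oertel-style branched surface framework that the paper is built on, where surfaces carried by $\mathcal{B}_{\pm}$ with positive weights satisfying the branch equations are automatically taut, so no sphere or disk components occur. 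To be fair, the paper's own two-sentence proof glosses over exactly this point; but your attempt to fill it with an incompressibility claim does not work as stated, and the correct repair is one of the two arguments above rather than an appeal to a nonexistent cut-and-paste incompressibility theorem.
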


\begin{proof}
By definition, 
$\Delta_{\pm}$ is a convex hull in $\Delta$ 
containing $\kappa_{\pm}$ ($\kappa_+= [1,1]$, $\kappa_-=[0,0]$), $\beta= [0,1]$, 
$\tfrac{\alpha+ \beta}{2}= [\tfrac{1}{2}, \tfrac{1}{2}]$, 
see Figure~\ref{fig_Face_map}(3). 
The fibered class $(i,j,k)_{\pm}$ is in the cone over $\Delta_{\pm}$, 
and the surface $S_{(i,j,k)_{\pm}}$ is built from $\widehat{S}_{(i,j,k)_{\pm}}$ 
(which is the union of  parallel copies of the minimal representatives $F_{\kappa_{\pm}}$, $F_{\beta}$ and $F_{\alpha+ \beta}$) 
by cut and past construction. 
Thus $S_{(i,j,k)_{\pm}}$ must be the minimal representative of $(i,j,k)_{\pm}$. 
\end{proof}

\begin{rem}
\label{rem_fiber}
By Lemma~\ref{lem_fiber}, 
we write $F_{(i,j,k)_{\pm}}= S_{(i,j,k)_{\pm}}$, and 
$F_{(i,j,k)_{\pm}}$ becomes a fiber of the fibration associated to  $(i,j,k)_{\pm}$. 
By Theorem~\ref{thm_Fried2}, 
$F_{(i,j,k)_{\pm}}$ (up to isotopy) is transverse to the flow $\Phi_{\alpha+ \beta}^t$. 
The first return map  $\Phi_{(i,j,k)_{\pm}}: F_{(i,j,k)_{\pm}} \rightarrow F_{(i,j,k)_{\pm}}$ 
with respect to  $\Phi_{\alpha+ \beta}^t$ becomes 
the pseudo-Anosov monodromy of the fibration associated $(i,j,k)_{\pm}$. 
\end{rem}

\subsection{Train tracks $\tau_{(i,j,k)_{\pm}}$}   
\label{subsection_Train}

We note that the unstable foliation $\mathcal{F}_{\alpha+ \beta}$ of $ \Phi_{\alpha+ \beta}$ 
is carried by  $\tau_{\alpha + \beta}$. 
Let $\widehat{\mathcal{F}} $ be the suspension of $\mathcal{F}_{\alpha+ \beta}$ by $\Phi_{\alpha+ \beta}$ 
which is the $2$-dimensional foliation of ${\Bbb T}_{\phi_{\alpha+ \beta}}$.  
We now construct the branched surfaces $\mathcal{B}_{\Delta_{\pm}}$, 
each of which carries  $\widehat{\mathcal{F}}$. 
Let $\delta$ and $\epsilon$ be as in Section~\ref{subsection_Branched}, 
that is $\delta$ and $\epsilon$ are constants such that 
$0 < \delta < 2 \delta< 1-\epsilon$. 
Hereafter we fix $\delta= \tfrac{1}{3}$. 
We choose two families $\{\tau_t^+\}_{0 \le t \le 1}$ and $\{\tau_t^-\}_{0 \le t \le 1}$ of train tracks 
with the following properties. 
(See Figure~\ref{fig_Bsurface_latter}, 
in which the  time $t$  increases along arrows.) 
\begin{enumerate}
\item[(1)]
$\tau_0^{+}=\tau_0^{-} =\Phi_{\alpha+ \beta}(\tau_{\alpha+ \beta})$. 

\item[(2)] 
$\tau_t^{+}= \tau_t^{-}=\tau_{\alpha+ \beta}$ for $1 - \epsilon_0  \le t \le 1$. 

\item[(3)] 
$\tau_t^{\pm}$ is obtained from $\tau_s^{\pm}$ by folding edges of $\tau_s^{\pm}$ 
for each $0 \le s < t \le 1 - \epsilon $, 
or $\tau_t^{\pm}$ is isotopic to $\tau_s^{\pm}$, 

\item[(4)] 
$\tau_t^+= \tau_t^{-}$ for  $0 \le t \le \frac{2}{3}$, 
and  $\{\tau_t^{\pm}\}_{0 \le t \le \frac{2}{3}}$ 
is given as in Figure~\ref{fig_Bsurface_latter}(3),  

\item[(5)] 
$\{\tau_t^{+}\}_{\frac{2}{3} < t \le 1 - \epsilon}$ (resp. $\{\tau_t^{-}\}_{\frac{2}{3} < t \le 1 - \epsilon}$)
is given  as in Figure~\ref{fig_Bsurface_latter}(1) (resp. (2)). 
\end{enumerate}
In Figure~\ref{fig_Bsurface_latter}, 
$\sigma \xrightarrow[\mathrm{folding}]{}  \tau$ 
(resp. $\sigma \xrightarrow[\mathrm{isotopy}]{}  \tau$) 
means that $\tau$ is obtained from $\sigma$ by folding edges of $\sigma$, 
(resp. $\tau$ is isotopic to $\sigma$). 
Observe that non-loop edges  of $\tau_{1/3}^{+}=\tau_{1/3}^{-}$ 
(resp. $\tau_{2/3}^{+}= \tau_{2/3}^{-}$) 
do not intersect with $V$ and $W$ (resp. with $U$), see Figure~\ref{fig_Bsurface_latter}(4)(5).  
The branched surfaces $\mathcal{B}_{\Delta_{\pm}} \subset {\Bbb T}_{\phi_{\alpha+ \beta}}$ are defined to be 
\begin{equation}
\label{equation_Bsurface}
\mathcal{B}_{\Delta_{\pm}}= \bigcup_{0 \le t \le 1} \tau_t^{\pm} \times \{t\}/ \sim ,
\end{equation}
where $\sim$ identifies $(x,1)$ and $(\Phi_{\alpha + \beta}(x),0)$ for $x \in \tau_{\alpha+ \beta}$.

\begin{center}
\begin{figure}
\includegraphics[width=6in]{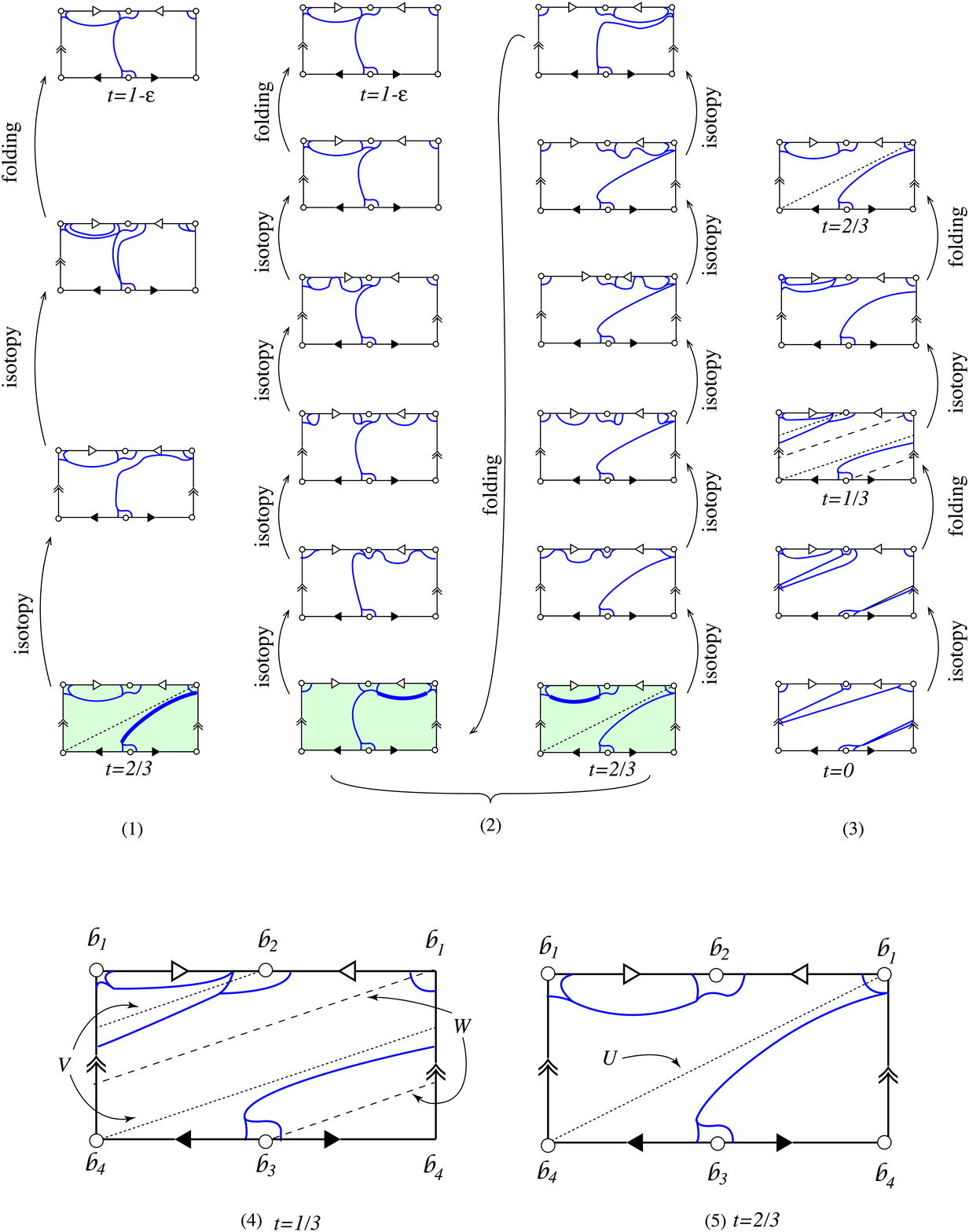}
\caption{
(Some of times $t$ are indicated near the puncture $\mathfrak{b}_3$.) 
(1) $\{\tau_t^{+}\}_{2/3 < t \le 1 - \epsilon}$. 
(2) $\{\tau_t^{-}\}_{2/3< t \le 1 - \epsilon}$. 
(3) $\{\tau_t^{\pm}\}_{0 \le t \le  2/3}$. 
(4) $\tau_{1/3}^{+}= \tau_{1/3}^{-}$. 
(5) $\tau_{2/3}^{+}= \tau_{2/3}^{-}$.} 
\label{fig_Bsurface_latter}
\end{figure}
\end{center}

\begin{center}
\begin{figure}
\includegraphics[width=5in]{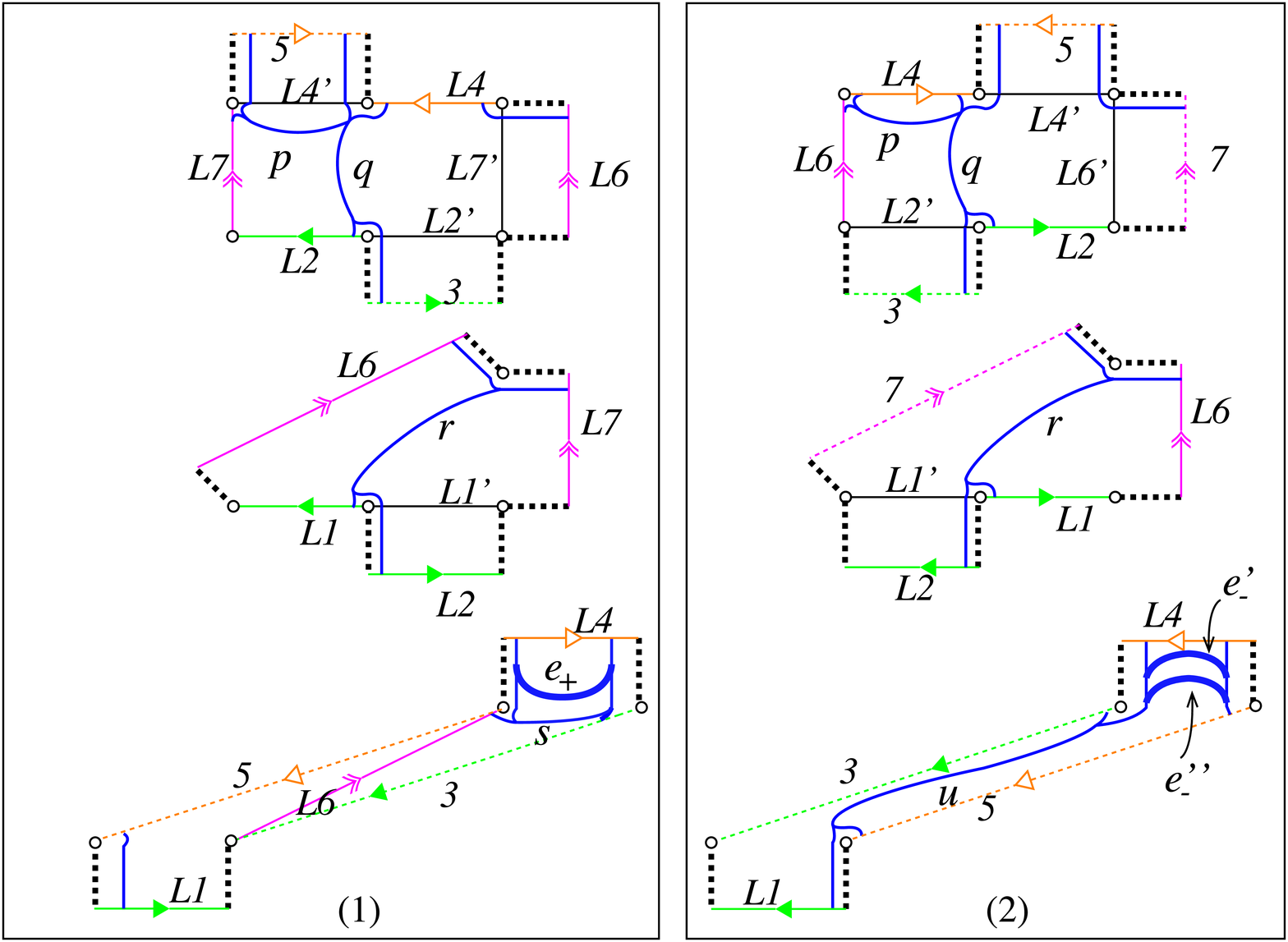}
\caption{
(1)  $ \mathcal{B}_+ \cap \mathcal{B}_{\Delta_+}$. 
(2) $ \mathcal{B}_- \cap \mathcal{B}_{\Delta_-}$. 
(cf. Figure~\ref{fig_identify}(1)(3).)}  
\label{fig_p_Train}
\end{figure}
\end{center}

\begin{rem}
The condition (5) above makes the difference between $\mathcal{B}_{\Delta_+}$ and $\mathcal{B}_{\Delta_-}$. 
The conditions (1)--(4) (without (5)) allow us to construct a branched surface 
which carries $\widehat{\mathcal{F}}$. 
The reason we require  (5) is that 
it is easy to extract a train track on   $F_{(i,j,k)_{\pm}}$ from the intersection 
$ F_{(i,j,k)_{\pm}} \cap \mathcal{B}_{\Delta_{\pm}}$ 
with the extra condition (5), 
see Lemma~\ref{lem_preTrainTrack}(2). 
 \end{rem}
 
 \begin{rem}
 \label{rem_TwiceOnce}
The following analysis is used in the proof of Lemma~\ref{lem_preTrainTrack}(2). 
 It happens twice that 
an edge of some element  in the subfamily $\{\tau_t^{-}\}_{1/3 \le t \le 1 - \epsilon}$ is passing through the segment $K$ through the isotopy, 
see Figure~\ref{fig_Bsurface_latter}(2)(3). 
On the other hand, the same thing happens once in the subfamily $\{\tau_t^{+}\}_{1/3 \le t \le 1 - \epsilon}$, 
see Figure~\ref{fig_Bsurface_latter}(1)(3).  
In the same figure, 
$4$-punctured disks containing the track tracks in question are colored, 
and edges of these train tracks in question are made thick. 
 \end{rem}

Since $F_{(i,j,k)_{\pm}}$ (up to isotopy) is transverse to the flow $\Phi_{\alpha+ \beta}^t$, 
we may assume that $F_{(i,j,k)_{\pm}}$ is transverse to $\mathcal{B}_{{\Delta}_{\pm}}$. 
We let 
\begin{equation}
\label{equation_pre}
\tau_{(i,j,k)_{\pm}}'= F_{(i,j,k)_{\pm}} \cap \mathcal{B}_{{\Delta}_{\pm}}.
\end{equation}

\begin{lem} 
\label{lem_preTrainTrack}
\ 

\begin{enumerate}
\item[(1)] 
The unstable foliation $\mathcal{F}_{(i,j,k)_{\pm}}$ of the pseudo-Anosov $\Phi_{(i,j,k)_{\pm}}$ is carried by $\tau_{(i,j,k)_{\pm}}'$. 

\item[(2)] 
Each component of $F_{(i,j,k)_{\pm}} \setminus \tau_{(i,j,k)_{\pm}}'$ is either a bigon (a disk with $2$ cusps) or 
a once punctured disk (i.e, annulus) with $k $ cusps for some $k \ge 1$.  
\end{enumerate}
\end{lem}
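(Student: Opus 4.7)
The plan is to derive both assertions by combining Fried's Theorem~\ref{thm_Fried2}(2)---which identifies the unstable foliation of $\Phi_{(i,j,k)_{\pm}}$ as the transverse slice $\widehat{\mathcal{F}} \cap F_{(i,j,k)_{\pm}}$ of the suspended foliation---with the observation that, by its very construction, the branched surface $\mathcal{B}_{\Delta_{\pm}}$ carries $\widehat{\mathcal{F}}$, and that $F_{(i,j,k)_{\pm}}$ may be assumed transverse to it after flow isotopy.

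For (1), I would first check that every time-slice $\tau_t^{\pm}$ in the defining family (\ref{equation_Bsurface}) carries the corresponding slice of $\widehat{\mathcal{F}}$. At the endpoints $\tau_0^{\pm} = \Phi_{\alpha+\beta}(\tau_{\alpha+\beta})$ and $\tau_{1-\epsilon}^{\pm} = \tau_{\alpha+\beta}$, this follows from Section~\ref{subsection_seed}, where $\tau_{\alpha+\beta}$ is invariant under $\phi_{\alpha+\beta}$ and carries $\mathcal{F}_{\alpha+\beta}$. Condition (3) in the construction guarantees that each intermediate $\tau_t^{\pm}$ is obtained from its predecessor by folding or isotopy, both of which preserve the carrying relation. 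Hence $\mathcal{B}_{\Delta_{\pm}}$ carries $\widehat{\mathcal{F}}$. Since $F_{(i,j,k)_{\pm}}$ is transverse to $\Phi_{\alpha+\beta}^t$ (Remark~\ref{rem_fiber}), a flow isotopy puts it in general position with respect to $\mathcal{B}_{\Delta_{\pm}}$, so $\tau'_{(i,j,k)_{\pm}} = F_{(i,j,k)_{\pm}} \cap \mathcal{B}_{\Delta_{\pm}}$ is a branched $1$-submanifold, and the transverse intersection of a carried foliation with $F_{(i,j,k)_{\pm}}$ is again carried, giving the conclusion via Theorem~\ref{thm_Fried2}(2).

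For (2), I would analyse the topology of the complement $N \setminus \mathcal{B}_{\Delta_{\pm}}$. By Remark~\ref{rem_real-edge}, the complementary regions of $\tau_{\alpha+\beta}$ in $\varSigma_{0,4}$ are the four once-punctured monogons at the punctures $\mathfrak{b}_i$, and each intermediate $\tau_t^{\pm}$ shares this combinatorial type except for transient bigon regions that appear briefly between consecutive foldings. Consequently $N \setminus \mathcal{B}_{\Delta_{\pm}}$ decomposes into three cusp solid tori around the three cusps of $N$ (recalling that the orbits of $\mathfrak{b}_2$ and $\mathfrak{b}_3$ merge into a single cusp under $\Phi_{\alpha+\beta}$) together with a finite collection of bigon prism slabs traced out by the transient bigons (cf.\ Remark~\ref{rem_TwiceOnce} and Figure~\ref{fig_Bsurface_latter}). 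Intersecting $F_{(i,j,k)_{\pm}}$ with each cusp solid torus produces a once-punctured disk whose boundary cusps correspond to the prongs of $\mathcal{F}_{(i,j,k)_{\pm}}$ at the relevant boundary component of $F_{(i,j,k)_{\pm}}$, and by Lemma~\ref{lem_summary}(6) the number of prongs is a positive integer; intersecting with a bigon slab produces a bigon. Since $\mathcal{F}_{(i,j,k)_{\pm}}$ has no interior singularities (Lemma~\ref{lem_summary}(6)), no other topological types arise.

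The main obstacle I anticipate is the detailed bookkeeping required to justify that the only transient complementary regions appearing during the folding sequence $\{\tau_t^{\pm}\}$ are bigons (rather than more complicated polygons), and to match each such transient bigon in the family with a bigon component of $F_{(i,j,k)_{\pm}} \setminus \tau'_{(i,j,k)_{\pm}}$. This will demand a case analysis separating the $+$ and $-$ constructions and tracing, sector by sector via Figure~\ref{fig_p_Train}, how each of the $i$ copies of $F_{\kappa_{\pm}}$, the $j$ copies of $F_{\beta}$, and the $k$ copies of $F_{\alpha+\beta}$ making up $F_{(i,j,k)_{\pm}}$ meets $\mathcal{B}_{\Delta_{\pm}}$---guided in particular by the ``twice vs.\ once passing through $K$'' dichotomy isolated in Remark~\ref{rem_TwiceOnce}.
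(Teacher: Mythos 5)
Your treatment of part (1) is essentially the paper's argument, with one small difference in how the key implication is closed: you assert that the transverse slice of a carried $2$-dimensional foliation along $F_{(i,j,k)_\pm}$ is carried by the corresponding slice of the branched surface, whereas the paper reaches the same conclusion by invoking McMullen's result that the suspension $\widehat{\mathcal{F}}_{(i,j,k)_\pm}$ of $\mathcal{F}_{(i,j,k)_\pm}$ by $\Phi_{(i,j,k)_\pm}$ is isotopic to $\widehat{\mathcal{F}}$, so one can regard $\mathcal{B}_{\Delta_\pm}$ as carrying a suspension \emph{of $\mathcal{F}_{(i,j,k)_\pm}$} and slice along the fiber. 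Your ``transverse slice of a carried foliation is carried'' step is plausible but is exactly the content that needs the McMullen identification (or an equivalent local-compatibility argument near the branch locus) to be airtight. Your verification that $\mathcal{B}_{\Delta_\pm}$ carries $\widehat{\mathcal{F}}$ via foldings/isotopies preserving the carrying relation at each time slice is fine and more explicit than the paper, which takes this as built into the construction.

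Part (2) is where the real gap lies, and you have correctly identified it yourself: the ``detailed bookkeeping'' you postpone is precisely the content of the proof. Your plan is to decompose $N \setminus \mathcal{B}_{\Delta_\pm}$ into cusp solid tori and bigon prism slabs and then slice by $F_{(i,j,k)_\pm}$. This is a reasonable strategy in principle, but you never establish the claimed decomposition of the complement, and verifying it directly in the $3$-manifold is at least as hard as the original statement. The paper sidesteps this by working not in $N$ but on the other branched surface $\mathcal{B}_\pm$: since $F_{(i,j,k)_\pm}$ is carried by $\mathcal{B}_\pm$ via the branch equations, the train track $\tau'_{(i,j,k)_\pm} = F_{(i,j,k)_\pm} \cap \mathcal{B}_{\Delta_\pm}$ is reconstructed from the \emph{finite} branched $1$-manifold $\mathcal{B}_\pm \cap \mathcal{B}_{\Delta_\pm}$ drawn on the sectors of $\mathcal{B}_\pm$ (Figure~\ref{fig_p_Train}), after folding or splitting the edges that arise on the inserted flowbands (the ``once vs.\ twice across $K$'' observation of Remark~\ref{rem_TwiceOnce} enters exactly here, producing the edges $e_+$ resp.\ $e_-',e_-''$). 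This reduces the problem to a fixed finite pattern, independent of $(i,j,k)$; the complementary bigons come from the edges that fold away, and after collapsing them the remaining complementary regions are read off as once-punctured disks from Figures~\ref{fig_identify}(2)(4) and~\ref{fig_p_Train}. You should replace ``analyse $N \setminus \mathcal{B}_{\Delta_\pm}$'' with ``analyse $\mathcal{B}_\pm \cap \mathcal{B}_{\Delta_\pm}$ on $\mathcal{B}_\pm$ and propagate through the branch equations'' --- that is the idea that closes the gap.
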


\begin{proof}
(1)  
By Theorem~\ref{thm_Fried2}, we have 
$\mathcal{F}_{(i,j,k)_{\pm}} = F_{(i,j,k)_{\pm}} \cap \widehat{\mathcal{F}}$. 
Moreover 
its suspension $\widehat{\mathcal{F}}_{(i,j,k)_{\pm}}$ by $\Phi_{(i,j,k)_{\pm}}$ 
is isotopic to  $\widehat{\mathcal{F}}$, see \cite[Corollary~3.2]{McMullen}. 
Since $\widehat{\mathcal{F}}$ is carried by the branched surface $\mathcal{B}_{\Delta_{\pm}}$, 
so is  $\widehat{\mathcal{F}}_{(i,j,k)_{\pm}}$. 
This implies that 
$\mathcal{F}_{(i,j,k)_{\pm}}$ is carried by $\tau_{(i,j,k)_{\pm}}'= F_{(i,j,k)_{\pm}} \cap \mathcal{B}_{{\Delta}_{\pm}}$. 
\medskip

\noindent
(2) 
We recall that 
the suitable branch equations on $\mathcal{B}_{\pm}$ induce $F_{(i,j,k)_{\pm}}$. 
Also recall that we need to insert some flowbands between the two segments with the same labeling 
to get the whole picture of $\mathcal{B}_{\pm}$. 
To view components of $F_{(i,j,k)_{\pm}} \setminus \tau_{(i,j,k)_{\pm}}'$, 
let us consider 
$ \mathcal{B}_{\pm} \cap \mathcal{B}_{\Delta_{\pm}}$  on $\mathcal{B}_{\pm}$ 
which is a $1$-branched manifold. 
We recall the definition of $\widehat{\mathcal{B}}_{\pm}$, 
see (\ref{equation_pre-Bpulus}) and (\ref{equation_pre-Bminus}). 
The constant $\delta$ is chosen to be $\tfrac{1}{3}$ 
and we have the train tracks 
$\tau_{1- \epsilon}^{+}= \tau_{1- \epsilon}^{-}= \tau_{\alpha+ \beta}$,  
$\tau_{2/3}^{+}= \tau_{2/3}^{-}$, 
$\tau_{1/3}^{+}= \tau_{1/3}^{-}$, see Figure~\ref{fig_Bsurface_latter}. 
Then the `patterns' in Figure~\ref{fig_p_Train}(1)(2) are obtained from 
$\mathcal{B}_{\pm} \cap \mathcal{B}_{\Delta_{\pm}}$ 
after folding or splitting all edges which appear on the flowbands. 
In fact the thick edge $e_+$  in Figure~\ref{fig_p_Train}(1) 
is the one 
(resp. thick edges $e_-'$ and $e_-''$  in Figure~\ref{fig_p_Train}(2) are the ones) by folding some edge 
(resp. by splitting some edges) 
which appear(s) on the flowband between the segments with the labeling $L4$, 
see also Remark~\ref{rem_TwiceOnce}. 
Then we reconstruct the fibers $F_{(i,j,k)_{\pm}}$ 
obtained from the suitable branch equations on $\mathcal{B}_{\pm}$ with the `patterns' in  Figure~\ref{fig_p_Train}(1)(2) .  
That we get is $\tau_{(i,j,k)_{\pm}}'$ (up to folding and splitting the edges) on $F_{(i,j,k)_{\pm}}$. 
Each edge of $\tau_{(i,j,k)_{\pm}}'$ originates in some edge of 
the branched $1$-manifold $\mathcal{B}_{\pm} \cap \mathcal{B}_{\Delta_{\pm}}$. 
We denote some of the edges of $\mathcal{B}_{+} \cap \mathcal{B}_{\Delta_{+}}$ 
(resp. $\mathcal{B}_{-} \cap \mathcal{B}_{\Delta_{-}}$) 
by $p$, $q$, $r$, $s$ (resp. $p$, $q$, $r$, $u$) 
as in Figure~\ref{fig_p_Train}(1) (resp. (2)).

We can fold 
all edges of $\tau_{(i,j,k)_{+}}'$ (resp. $\tau_{(i,j,k)_{-}}'$) 
which originate in $e_+$ (resp. $e_-'$ or $e_-''$)  into some edges. 
This means that these edges lie on the boundaries of some components of 
$F_{(i,j,k)_{\pm}} \setminus \tau_{(i,j,k)_{\pm}}'$ that are bigons. 
We fold all these edges as much as possible (i.e, collapse bigons), and 
we consider complementary regions of the resulting $1$-branched manifold. 
The combinatorics from Figures~\ref{fig_identify}(2)(4) and \ref{fig_p_Train} tell us that 
each component of the resulting $1$-branched manifold is a once punctured disk 
with $k$ cusps for some $k \ge 1$. 
\end{proof}

Let $\tau_{(i,j,k)_{\pm}}$ be the branched $1$-manifold obtained from $\tau_{(i,j,k)_{\pm}}'$ by collapsing all bigons of $\tau_{(i,j,k)_{\pm}}'$. 
By Lemma~\ref{lem_preTrainTrack}, we immediately have:

\begin{lem}
$\tau_{(i,j,k)_{\pm}}$ is  a train track on $F_{(i,j,k)_{\pm}}$ which carries $\mathcal{F}_{(i,j,k)_{\pm}}$.   
\end{lem}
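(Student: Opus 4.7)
The plan is to deduce both conclusions directly from Lemma~\ref{lem_preTrainTrack}. The key observation is that the operation of collapsing a bigon complementary region is exactly a fold (or, dually, corresponds to identifying two ties in a tie neighborhood), which both preserves the branched $1$-manifold structure and preserves the carrying relation for measured foliations.

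First I would verify the two defining conditions for a train track. The smoothness at vertices condition is inherited from $\tau_{(i,j,k)_{\pm}}'$: since $\tau_{(i,j,k)_{\pm}}'=F_{(i,j,k)_{\pm}}\cap\mathcal{B}_{\Delta_{\pm}}$ is the transverse intersection of the fiber with a branched surface, every branched point of $\tau_{(i,j,k)_{\pm}}'$ acquires a well-defined common tangent from the branched locus of $\mathcal{B}_{\Delta_{\pm}}$, and bigon-collapsing replaces a pair of tangent-smooth arcs by a single arc with the same tangent structure. For the complementary regions, Lemma~\ref{lem_preTrainTrack}(2) lists only bigons and once-punctured disks with at least one cusp; after collapsing all bigons only the once-punctured disks survive, and these are precisely the cusped annuli (one boundary being a puncture of $F_{(i,j,k)_{\pm}}$, the other carrying the cusps) required by the second clause of the train-track definition.

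Next I would establish the carrying claim. By Lemma~\ref{lem_preTrainTrack}(1), the unstable foliation $\mathcal{F}_{(i,j,k)_{\pm}}$ is carried by $\tau_{(i,j,k)_{\pm}}'$, so it admits a partial measured foliation representative supported in a tie neighborhood $\mathcal{N}(\tau_{(i,j,k)_{\pm}}')$ transverse to the ties. A bigon complementary region corresponds to a pair of parallel sub-arcs of $\tau_{(i,j,k)_{\pm}}'$ whose adjacent tie neighborhoods can be amalgamated; this amalgamation is precisely a fold, and the transverse measured foliation restricted to the amalgamated tie neighborhood remains transverse to the new ties. Iterating over all bigons produces $\tau_{(i,j,k)_{\pm}}$ together with a tie neighborhood carrying $\mathcal{F}_{(i,j,k)_{\pm}}$.

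The main point requiring care is verifying that every complementary bigon really is eliminated by a sequence of folds internal to $F_{(i,j,k)_{\pm}}$ (so no new complementary disks with too few cusps are created), and that no two once-punctured-disk regions get merged. This will follow from the explicit combinatorics of the patterns $\mathcal{B}_\pm\cap\mathcal{B}_{\Delta_\pm}$ depicted in Figure~\ref{fig_p_Train}, since each bigon sits between two real edges corresponding to the edges $e_+$, $e_-'$, $e_-''$ of Lemma~\ref{lem_preTrainTrack}(2)'s proof, and collapsing such a bigon is a local fold that does not affect the puncture-containing complementary components.
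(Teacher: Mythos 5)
Your proposal is correct and follows essentially the same route as the paper: the paper presents this lemma as an immediate consequence of Lemma~\ref{lem_preTrainTrack} (the two listed complementary-region types plus the carrying statement for $\tau_{(i,j,k)_\pm}'$), and you have simply made explicit the standard facts that bigon-collapsing is a fold, that folds preserve the tie-neighborhood carrying relation, and that eliminating the bigons leaves only once-punctured cusped disks, so $\tau_{(i,j,k)_\pm}$ satisfies the train-track definition. No gap; the elaboration matches the paper's intent.
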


\subsection{Monodromies $\Phi_{(i,j,k)_{\pm}}:  F_{(i,j,k)_{\pm}} \rightarrow F_{(i,j,k)_{\pm}}$ of the fibrations associated to $(i,j,k)_{\pm}$} 
\label{subsection_Monodromies}

First of all, we represent  fibers $F_{(i,j,k)_{\pm}}$ more simpler as follows. 
We shrink each flowband of $F_{(i,j,k)_{\pm}}$ as much as possible  along  flow lines 
into some edge.  
(Note that this operation does not change the topological type of  fibers.) 
Said differently, we simplify the branched surfaces $\mathcal{B}_{\pm}$ as follows. 
Shrink each flowband of the three pieces, see Figure~\ref{fig_identify}(1) (resp. (3)), 
as much as possible 
along  flow lines into some edge. 
Then the branch equations on $\mathcal{B}_{\pm}$ induces the one on 
such a simplified branched surface, 
from which one gets a surface in question which is homeomorphic to $F_{(i,j,k)_{\pm}}$. 

After shrinking flowbands, the resulting three pieces (the bottom, middle, top pieces) are: 
\begin{enumerate}
\item[(1)]
$\kappa_+$-patch: 
the two acute-angled triangles sharing vertices $\mathfrak{b}_1$ and $\mathfrak{b}_4$ 
\\
(resp. $\kappa_-$-patch: the parallelogram),

\item[(2)] 
$\beta$-patch: 
the right-angled triangle, and 

\item[(3)] 
$\alpha+ \beta$-patch: 
the rectangle.
\end{enumerate}
We call these pieces 
(1) {\it $\kappa_+$-patch} (resp. {\it $\kappa_-$-patch} ), 
(2) {\it $\beta$-patch} and (3) {\it $\alpha+ \beta$-patch}. 
See Figure~\ref{fig_map1_precise}(1) (resp. Figure~\ref{fig_map2_precise}(1)) for  three kinds of patches. 
We often draw  pairs of the two acute-angled triangles for the $\kappa_+$-patch separately 
as in Figure~\ref{fig_map1_precise}(1), 
but they should share the two vertices $\mathfrak{b}_1$ and $\mathfrak{b}_4$ 
(cf. Figure~\ref{fig_triangle}(3)).    
We can get the simplified fibers $F_{(i,j,k)_+}$ (resp. $F_{(i,j,k)_-}$) 
from $i$ parallel copies of $\kappa_+$-patches (resp. $\kappa_-$-patches), 
$j$ parallel copies of $\beta$-patches and $k$ parallel copies of $\alpha+ \beta$-patches 
under  suitable identifications of the boundaries of patches.

If we have $j$ parallel copies of the same kind of patches, say $\beta$-patches, 
${\Bbb P}_{\beta}^1, \cdots, {\Bbb P}_{\beta}^{j}$, 
we have ${\Bbb P}_{\beta}^{\ell} \subset F_{\alpha+ \beta}^{n_{\ell}}$ 
for some $0< n_{\ell}<1$, where $\ell \in \{1, \cdots, j\}$. 
For the notation of $F_{\alpha+ \beta}^{n_{\ell}}$, see the end of Section~\ref{subsection_Minimal}. 
If $0< n_1 < \cdots < n_{j-1} <n_{j}<1$, then 
we call the $\beta$-patch ${\Bbb P}_{\beta}^1$ {\it the bottom ($\beta$-)patch}, 
and call   ${\Bbb P}_{\beta}^{j}$ {\it the top ($\beta$-)patch}. 
Other $\beta$-patches are called  {the middle ($\beta$-)patches}. 
We define the top, middle, bottom for other patches similarly. 
We color each top of the three kinds of patches, 
see Figure~\ref{fig_map1_precise}(1)(left column), Figure~\ref{fig_map2_precise}(1)(left column). 
Here we label $E'$ for the top $\beta$-patch, and 
label $F'$ and $G'$ (resp. $F'$) for the top $\kappa_+$-patch (resp. $\kappa_-$-patch) in the same figure. 
We label $A'$, $B'$, $C'$ and $D'$ for  isosceles right-angled triangles which lies on the top $\alpha+ \beta$-patch.

The patches needed for building $F_{(i,j,k)_{\pm}}$ for the non-degenerate class $(i,j,k)_{\pm}$ 
are given  as in the same figure. 
We can think three kinds of patches are in the cylinder $\varSigma_{0,4} \times (0,1) \subset {\Bbb T}_{\phi_{\alpha+ \beta}}$. 
We have the flow direction in the cylinder from the `bottom' $\varSigma_{0,4} \times \{0\}$ to the `top' $\varSigma_{0,4} \times \{1\}$. 
The types of arrows (red, green, yellow) in the figure  are compatible with the ones illustrated 
in Figures~\ref{fig_normOne} and \ref{fig_identify}. 
Among the  patches with the same kind ($\kappa_{\pm}$-patches, $\beta$-patches or $\alpha+ \beta$-patches), 
the way to label  parallel segments with the same kind of arrow  
is that the number for the labeling   increases (cyclically) along the flow direction. 
We often omit to label segments which lie on the middle patches. 
To get the fiber $F_{(i,j,k)_{\pm}}$, we identify the two segments with the same kind of arrow and with the same labeling (same number) 
by using the flow $\Phi_{\alpha+ \beta}^t$.   
In the right  column of (1) in  the same figure, 
the labeling of  segments on  patches are the same as the one given in the left column.

Let us turn to construct   $\Phi_{(i,j,k)_{\pm}}: F_{(i,j,k)_{\pm}} \rightarrow F_{(i,j,k)_{\pm}} $ explicitly. 
It is enough to describe where each patch maps to. 
Since the desired monodromy $\Phi_{(i,j,k)_{\pm}}$ is the first return map on $F_{(i,j,k)_{\pm}}$ with respect to $\Phi_{\alpha+ \beta}^t$, 
we see the followings. 
All patches but the top of  each kind of patches 
map to the next above patch (of the same kind) along the flow direction. 
Thus the monodromy $\Phi_{(i,j,k)_{\pm}}$ restricted to these patches is just a shift map. 
On the other hand, 
each top patches  map to some bottom patches (possibly with different kinds),  
see Figure~\ref{fig_map1_precise}(1) for  $\Phi_{(i,j,k)_{+}}$ and 
see Figure~\ref{fig_map2_precise}(1) for $\Phi_{(i,j,k)_{-}}$, 
where $A,B, \cdots$ are the images of $A',B', \cdots $ under $\Phi_{(i,j,k)_{\pm}}$. 
More precisely, 
we can get the image of the top  $\beta$-patch under $\Phi_{(i,j,k)_{\pm}}$ 
when we push the fiber $F_{(i,j,k)_{\pm}}$ along the flow direction and see how this top patch hits to the 
bottom $\alpha+ \beta$-patch. 
Similarly, one can get the image of the top $\kappa_+$-patch (resp. top $\kappa_-$-patch) 
under $\Phi_{(i,j,k)_+}$ (resp. $\Phi_{(i,j,k)_-}$) 
if we see how this top patch hit to the both  bottom $\beta$-patch and bottom $\alpha+ \beta$-patch. 
To get the images of the isosceles right-angled triangles $A'$, $B'$, $C'$ and $D'$ which lies on the top $\alpha+ \beta$-patch, 
we first consider the acute-angled triangles $A$, $B$, $C$ and $D$ which lie on $\varSigma_{0,4} \times \{0\}$ 
as in Figure~\ref{fig_pApillow}(right). 
Then investigate how these acute-angled triangles hit  bottoms  patches, 
when we push them  along the flow direction.

The monodromies of the fibrations associated to the degenerated classes can be constructed similarly. 
As an example, we deal with the degenerated classes 
$a= (j,k)_0$'s, see Figure~\ref{fig_map3_precise}. 

\begin{rem}
\label{rem_delta0}
Suppose that $ (j,k)_0$ is primitive (i.e, $\gcd(j,k)=1$). 
Then the fiber  $F_{(j,k)_0}$ is connected, and it  has genus $0$, see \cite{KT}. 
Many pseudo-Anosovs with small dilatations defined on the surfaces of genus $0$ 
are contained in the family of fibered classes  $ (j,k)_0$'s, 
see Examples~\ref{ex_braid1}, \ref{ex_braid2} and \cite[Section~4.1]{KT}. 
By using the Artin generators of the braid groups, 
the words which represent  $\phi_{(j,k)_0}$'s are given in \cite[Theorem~3.4]{KT}. 
They are quite simple words. 
\end{rem}

\begin{center}
\begin{figure}
\includegraphics[width=5.5in]{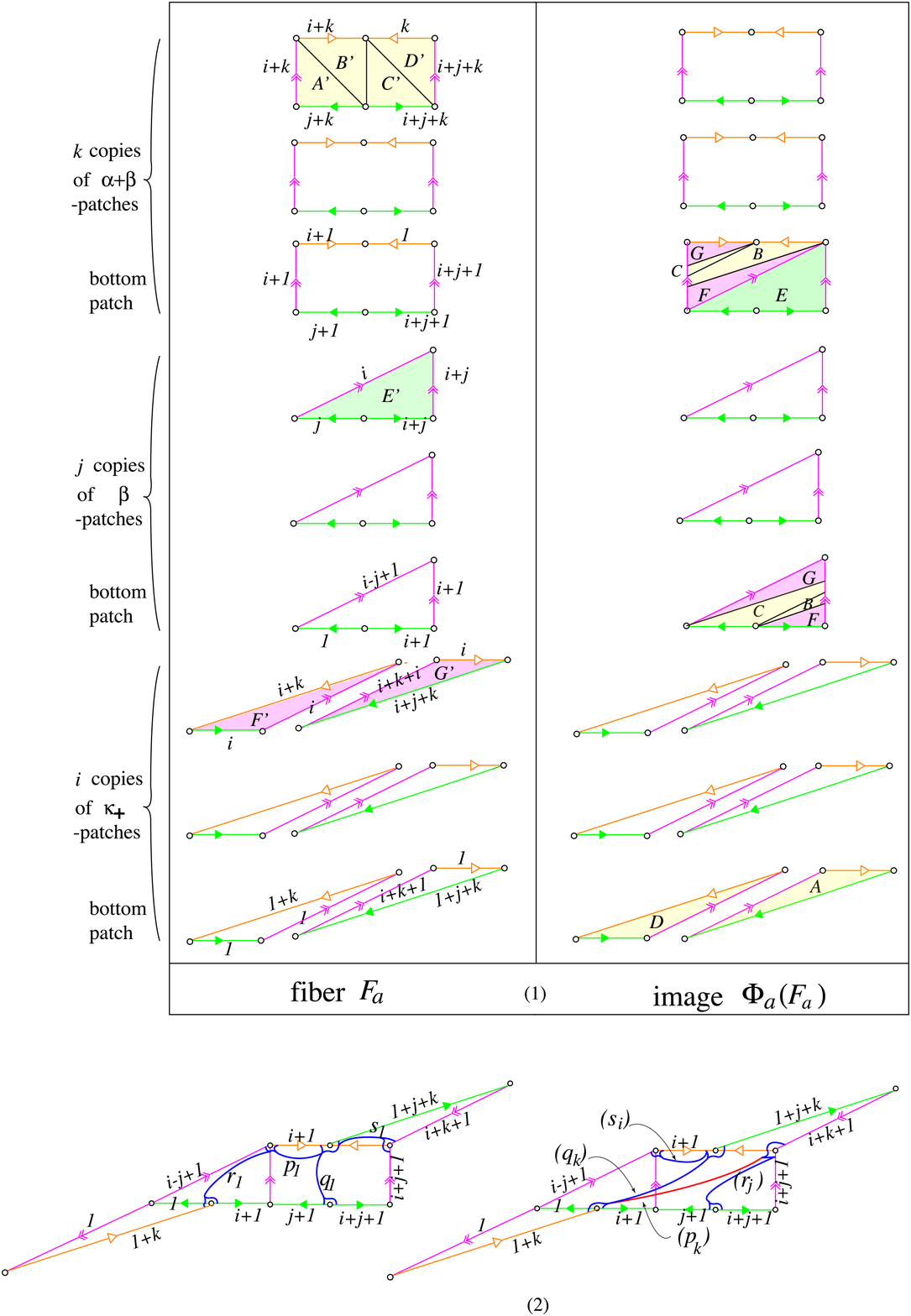}
\caption{
Labeling of segments in the right column of (1) 
is the same as the one in the left column. 
(1) 
$\Phi_a: F_a \rightarrow F_a$ for non-degenerate $a=(i,j,k)_+$. 
(2) 
(left) Bottom  edges  $s_1$, $r_1$, $p_1$ and $q_1$ 
which lie on  three bottom patches. 
(right) Images of   top edges $s_i$, $r_j$, $p_k$ and $q_k$ 
under $\Phi_{a}$ which lie on  three bottom patches. 
We denote by $(s_i)$ etc., the image $\Phi_{a}(s_i)$ etc.} 
\label{fig_map1_precise}
\end{figure}
\end{center}

\begin{center}
\begin{figure}
\includegraphics[width=5in]{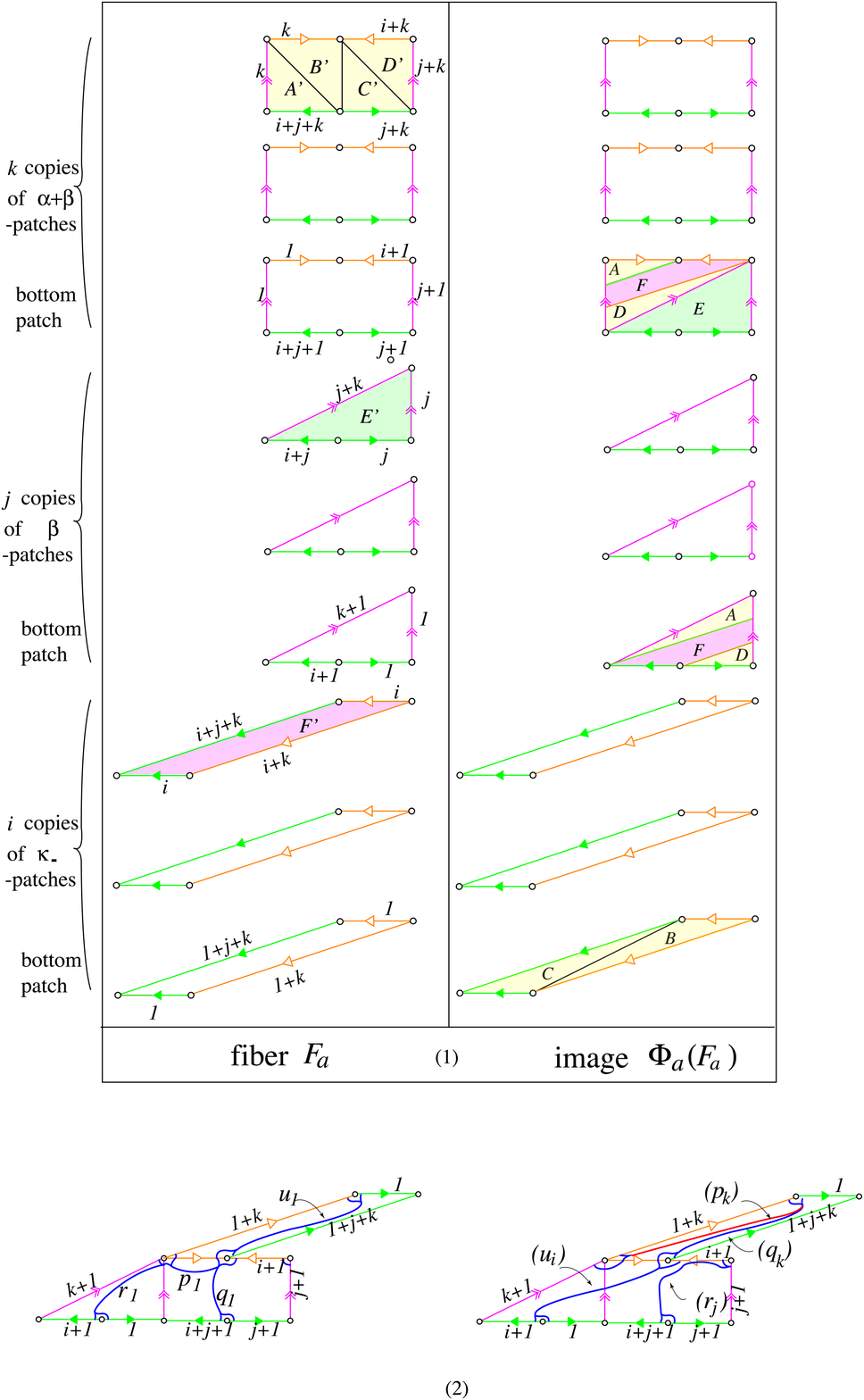}
\caption{(1) $\Phi_a: F_a \rightarrow F_a$ for non-degenerate $a=(i,j,k)_-$.  
(2)  
(left) Bottom  edges  $u_1$, $r_1$, $p_1$ and $q_1$ 
which lie on  three bottom patches. 
(right) Images of top edges $u_i$, $r_j$, $p_k$ and $q_k$ 
under $\Phi_{a}$  which lie  on  three bottom patches.} 
\label{fig_map2_precise}
\end{figure}
\end{center}

\begin{center}
\begin{figure}
\includegraphics[width=3.5in]{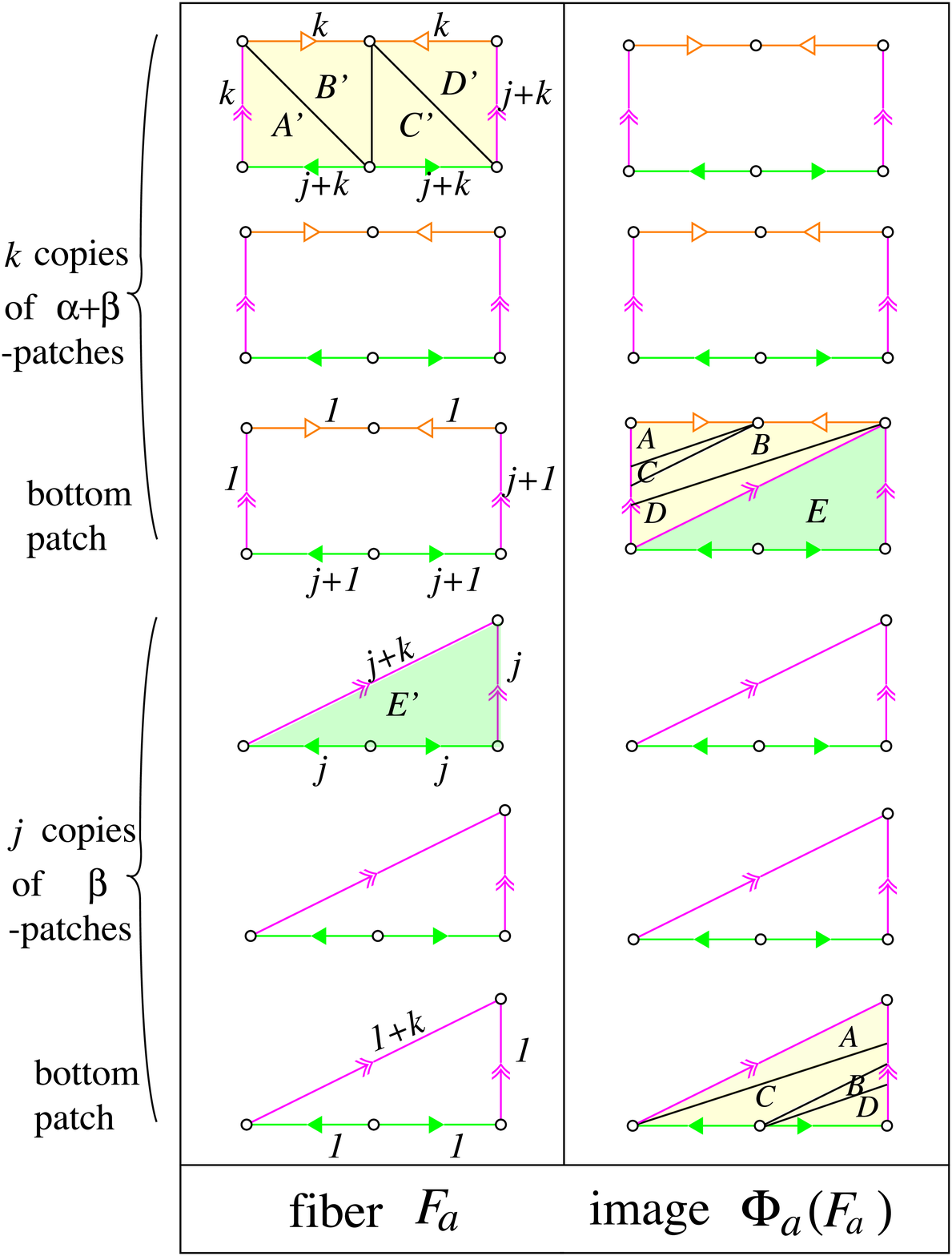}
\caption{$\Phi_a: F_a \rightarrow F_a$ for degenerate  $a =(j,k)_0$.} 
\label{fig_map3_precise}
\end{figure}
\end{center}

\begin{center}
\begin{figure}
\includegraphics[width=6in]{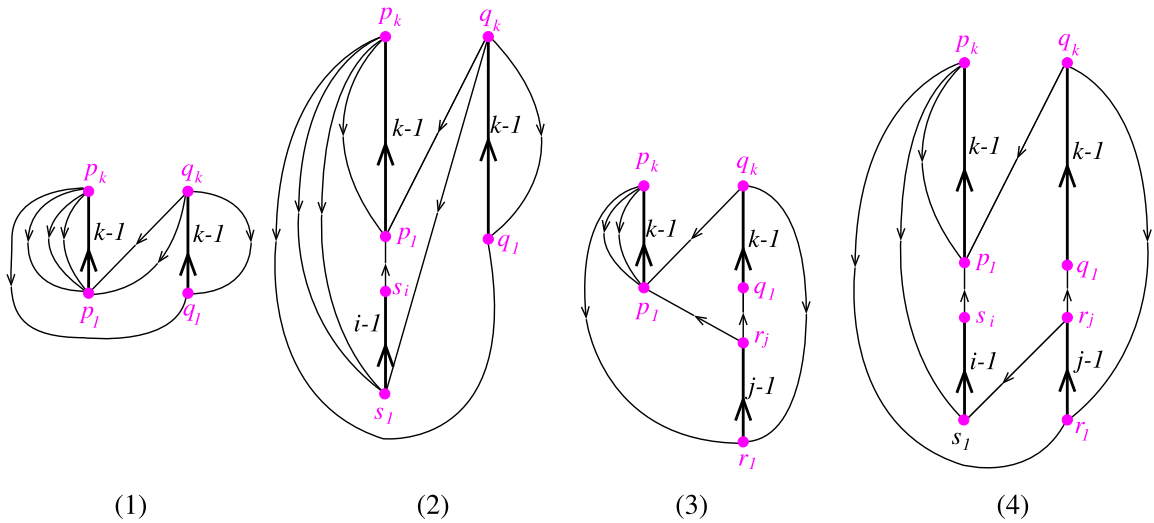}
\caption{
$\Gamma_a$ for $a = (i,j,k)_+ $. 
(1)(2)(3) degenerate cases. (4) non-degenerate case. 
(1) $a= (0,k)_0$ with $k>0$. 
(2) $a= (i,0,k)_+$ with $i ,k>0$. 
(3) $a= (j,k)_0$ with $j, k>0$. 
(4) $a= (i,j,k)_+$ with $i,j,k >0$.} 
\label{fig_digraph_p}
\end{figure}
\end{center}

\begin{center}
\begin{figure}
\includegraphics[width=5.8in]{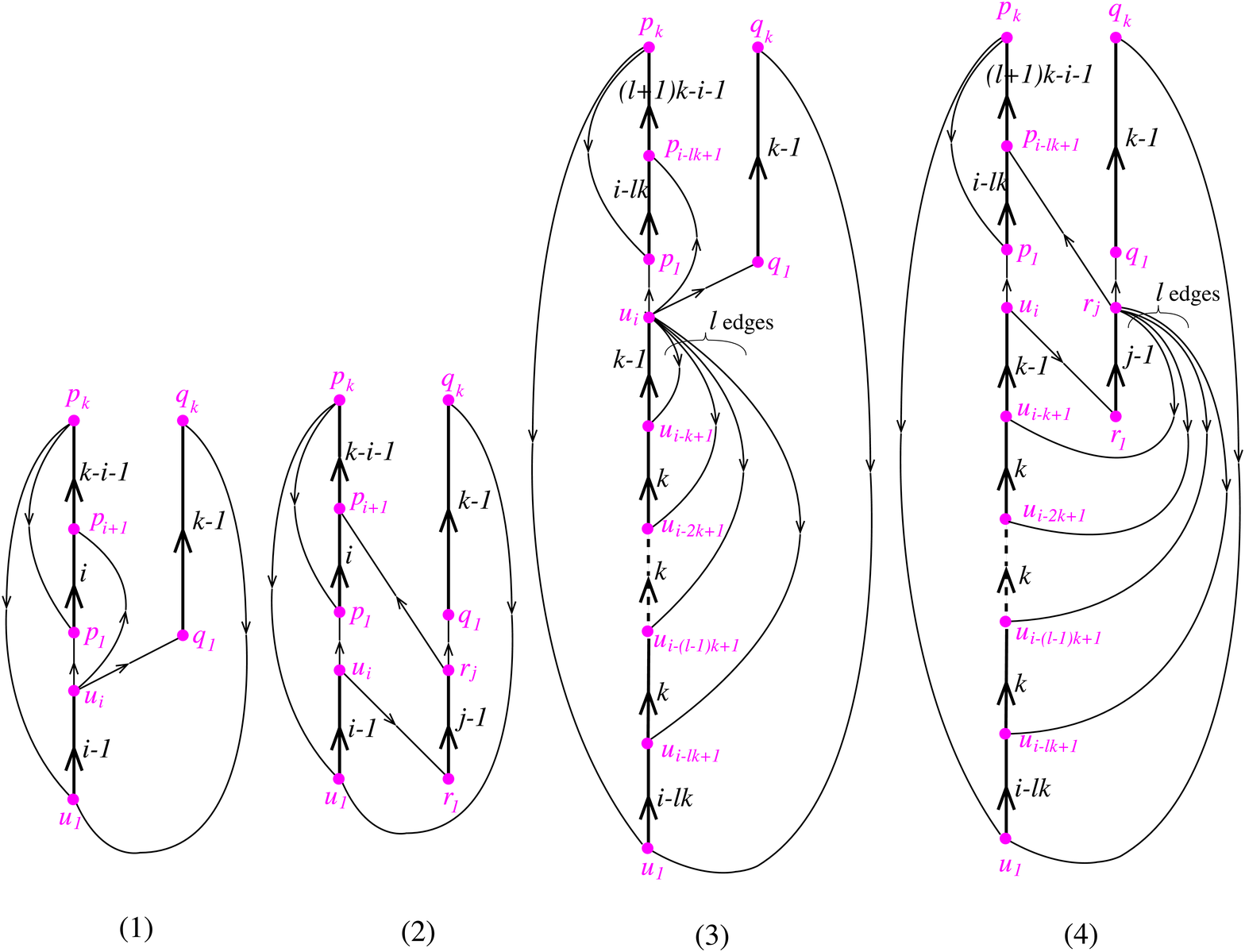}
\caption{
$\Gamma_a$ for $a= (i,j,k)_-$. 
(1)(3) degenerate cases. (2)(4) non-generate cases. 
(1) $a= (i,0,k)_-$ with $0< i < k$. 
(2) $a=(i,j,k)_-$ with $0< i < k$. 
(3) $a= (i,0,k)_-$ with $i  \ge k>0$. 
(4) $a= (i,j,k)_-$ with $i \ge k>0$. 
For (3)(4), $\ell \ge 1$ is an integer such that 
$0 \le i-k\ell \le k-1$.} 
\label{fig_digraph_n}
\end{figure}
\end{center}

\subsection{Train track representatives 
$\mathfrak{p}_{(i,j,k)_{\pm}}: \tau_{(i,j,k)_{\pm}} \rightarrow \tau_{(i,j,k)_{\pm}}$ of $\phi_{(i,j,k)_{\pm}}$}  
\label{subsection_TTmap}

In the following lemma, we use 
the metrized, directed graph  $\Gamma_{(i,j,k)_{\pm}}$ given in Figures~\ref{fig_digraph_p} and \ref{fig_digraph_n}, 
where each edge with no labeling 
means that its length is equal to $1$, 
and all edges with labeling are made thick in the figures.

\begin{lem}
\label{lem_invariant}
\ 

\begin{enumerate}
\item[(1)] 
The train track $\tau_{(i,j,k)_{\pm}}$ is invariant under $\phi_{(i,j,k)_{\pm}} = [\Phi_{(i,j,k)_{\pm}}]$. 
If we let $\mathfrak{p}_{(i,j,k)_{\pm}}: \tau_{(i,j,k)_{\pm}} \rightarrow \tau_{(i,j,k)_{\pm}}$ 
be the train track representative of $\phi_{(i,j,k)_{\pm}}$, then 
its incidence matrix is Perron-Frobenius. 

\item[(2)] 
The directed graph  $\Gamma_{(i,j,k)_{\pm}}$ 
is the one induced by $\mathfrak{p}_{(i,j,k)_{\pm}}: \tau_{(i,j,k)_{\pm}} \rightarrow \tau_{(i,j,k)_{\pm}}$, 
\end{enumerate}
\end{lem}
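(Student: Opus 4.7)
The plan is to handle (1) by exploiting the carrying property of $\tau_{(i,j,k)_{\pm}}$ for the unstable foliation, and to handle (2) by a direct edge-by-edge reading of the explicit monodromy constructed in Section~\ref{subsection_Monodromies}. For (1), $\tau_{(i,j,k)_{\pm}}'$ carries $\mathcal{F}_{(i,j,k)_{\pm}}$ by Lemma~\ref{lem_preTrainTrack}(1), and collapsing bigons preserves this property, so $\tau_{(i,j,k)_{\pm}}$ also carries $\mathcal{F}_{(i,j,k)_{\pm}}$. Since $\Phi_{(i,j,k)_{\pm}}$ preserves the measured foliation $\mathcal{F}_{(i,j,k)_{\pm}}$ up to rescaling, the image $\Phi_{(i,j,k)_{\pm}}(\tau_{(i,j,k)_{\pm}})$ also carries $\mathcal{F}_{(i,j,k)_{\pm}}$. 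A standard argument (transverse intersection with leaves) then shows it can be isotoped into the tie neighborhood $\mathcal{N}(\tau_{(i,j,k)_{\pm}})$ transverse to the ties, giving invariance and producing the representative $\mathfrak{p}_{(i,j,k)_{\pm}}$. The Perron--Frobenius property then follows from Theorem~\ref{thm_PaPe}, or can be read off a posteriori from the strong connectivity of $\Gamma_{(i,j,k)_{\pm}}$ once claim~(2) is established.

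For (2), I would exploit the flow-box structure of $F_{(i,j,k)_{\pm}}$. The fiber decomposes into $i$ parallel $\kappa_{\pm}$-patches, $j$ parallel $\beta$-patches, and $k$ parallel $\alpha+\beta$-patches, stacked along the suspension flow $\Phi_{\alpha+\beta}^{t}$. Since $\Phi_{(i,j,k)_{\pm}}$ is the first return map of $\Phi_{\alpha+\beta}^{t}$, it acts as a simple shift from each non-top patch to the next patch above, of the same kind. Consequently, every real edge of $\tau_{(i,j,k)_{\pm}}$ lying in a non-top patch maps homeomorphically to the analogous edge in the patch directly above, contributing a single length-one directed edge of $\Gamma_{(i,j,k)_{\pm}}$. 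These shifts account for all the long linear chains of vertices visible in Figures~\ref{fig_digraph_p} and \ref{fig_digraph_n}. What remains is to compute the image under $\Phi_{(i,j,k)_{\pm}}$ of the edges lying in the top $\kappa_{\pm}$-, $\beta$-, and $\alpha+\beta$-patches; these provide every branching vertex and every edge with length greater than one.

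To compute the images of the top edges, I would use the explicit local pictures in Figures~\ref{fig_map1_precise}(2), \ref{fig_map2_precise}(2), \ref{fig_map3_precise}, which show exactly how the top edges $s_i, r_j, p_k, q_k$ (respectively $u_i, r_j, p_k, q_k$) land in the bottom patches. Composing this with the folding of the seed train track $\tau_{\alpha+\beta}$ in Figure~\ref{fig_seedABCD} (from $\Phi_{\alpha+\beta}(\tau_{\alpha+\beta})$ back onto $\tau_{\alpha+\beta}$) converts the image to an edge-path on $\tau_{(i,j,k)_{\pm}}$. Reading off which real edges it traverses and in which order produces the outgoing edges at the corresponding vertex of $\Gamma_{(i,j,k)_{\pm}}$, and the weights (i.e.\ the thick edges drawn in the figures) record the number of real edges crossed during a single traversal.

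The main obstacle is the case analysis at the top of the stack. The combinatorics of folding depends on whether we work in the $+$ or $-$ case (the intersection $\mathcal{B}_{\pm}\cap\mathcal{B}_{\Delta_{\pm}}$ differs by the features noted in Remark~\ref{rem_TwiceOnce}), and further splits according to whether the class is non-degenerate or degenerate, and (for the $-$ case) whether $i<k$ or $i\ge k$. The latter dichotomy explains the integer $\ell$ with $0\le i-k\ell\le k-1$ appearing in Figure~\ref{fig_digraph_n}(3)(4): when $i\ge k$, the top $\kappa_{-}$-edge wraps around the stack of $\alpha+\beta$-patches $\ell$ times before re-entering a $\kappa_{-}$-patch, producing an edge of length $\ell$ in $\Gamma_{(i,j,k)_{-}}$. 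Carrying out this bookkeeping carefully in each case, and matching the outcome to Figures~\ref{fig_digraph_p} and \ref{fig_digraph_n}, is essentially the only real work; the invariance of $\tau_{(i,j,k)_{\pm}}$ and the Perron--Frobenius conclusion are formal consequences.
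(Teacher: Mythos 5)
Your treatment of claim~(2) follows the paper's route closely: decompose the fiber into stacked patches, note that the first-return map acts as a pure shift on non-top edges, and compute the images of top edges from Figures~\ref{fig_map1_precise}(2), \ref{fig_map2_precise}(2). The case split (non-degenerate vs.\ degenerate, $+$ vs.\ $-$, $i<k$ vs.\ $i\ge k$) is indeed where the bookkeeping lives. One small misattribution: in the $-$ case the wrap-around producing the integer $\ell$ (with $0\le i-k\ell\le k-1$) is driven by the image of the top $\beta$-edge $r_j$ crossing the segment $K$ --- not by the top $\kappa_-$-edge $u_i$, whose image is the short path $r_1 \to p_1$.

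Your proposed shortcut for claim~(1), however, has a real gap. You argue: $\tau_{(i,j,k)_{\pm}}$ carries $\mathcal{F}_{(i,j,k)_{\pm}}$; $\Phi_{(i,j,k)_{\pm}}$ preserves the foliation up to rescaling, so its image also carries $\mathcal{F}_{(i,j,k)_{\pm}}$; hence the image can be isotoped transversely into $\mathcal{N}(\tau_{(i,j,k)_{\pm}})$. The last step is not a standard consequence of the first two. Two train tracks that both carry the same measured foliation are not in general related by carrying --- for instance, a train track and a split of it in a direction compatible with the foliation both carry it, but neither need be isotopic into the tie neighborhood of the other. Theorem~\ref{thm_PaPe} asserts existence of \emph{some} invariant train track carrying $\mathcal{F}^u$, not that every carrying train track is invariant. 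The paper does not invoke any such abstract carrying argument; it verifies invariance directly by showing, patch by patch, that $\Phi_{(i,j,k)_{\pm}}(e_{\mathrm{top}})$ lands transversely in the tie neighborhood (Figures~\ref{fig_map1_precise}(2), \ref{fig_map2_precise}(2)), so that invariance and the description of $\Gamma_{(i,j,k)_{\pm}}$ come out of the same computation. You actually acknowledge this correct route at the end when you say invariance is a ``formal consequence'' of the explicit analysis; that observation should replace, not merely supplement, the carrying-based argument. Once invariance is established this way, the Perron--Frobenius property follows from Theorem~\ref{thm_PaPe} exactly as you say.
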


Once  we prove that $\tau_{(i,j,k)_{\pm}}$ is invariant under $\phi_{(i,j,k)_{\pm}}$, 
the claim that the incidence matrix of $\phi_{(i,j,k)_{\pm}}$ is Perron-Frobenius follows from Theorem~\ref{thm_PaPe}.

\begin{proof}[Proof of Lemma~\ref{lem_invariant}] 
By construction of $\tau_{(i,j,k)_{\pm}}$, 
each edge of $\tau_{(i,j,k)_{\pm}}$ originates in some edge of the intersection $\mathcal{B}_{\pm} \cap \mathcal{B}_{\Delta_{\pm}}$ 
(see Figure~\ref{fig_p_Train}). 
We only  label 
edges of $\tau_{(i,j,k)_+}$ (resp. $\tau_{(i,j,k)_-}$) which originate in the  edges $p,q,r$ and $s$ 
(resp. $p,q,r$ and $u$), 
since it turns out that the images of other edges under $\Phi_{(i,j,k)_{\pm}}$ are eventually periodic up to isotopy. 
We first explain the way to label the edges of $\tau_{(i,j,k)_{\pm}}$, 
which is similar to the one to label segments of patches given in Section~\ref{subsection_Monodromies}. 
We label parallel  edges of $\tau_{(i,j,k)_{\pm}}$ 
(which originate in the same edge of $\mathcal{B}_{\pm} \cap \mathcal{B}_{\Delta_{\pm}}$) so that  
the number for the labeling  increases along the flow direction. 
For example, see Figures~\ref{fig_ex_LT}--\ref{fig_ex_Tsai}.

We only prove the claims (1) and (2) for  non-degenerate classes $(i,j,k)_{\pm}$. 
(The proofs for degenerate classes are similar.)  
Let us describe the image of an edges $e$ with labeling under  $\Phi_{(i,j,k)_{\pm}}$. 
Such an edge $e$ lies on some patch  for building $F_{(i,j,k)_{\pm}}$. 
Suppose that  $e$ lies on a patch, say ${\Bbb P}_e$ which is not a top patch. 
Then $e$ maps  (under $\Phi_{(i,j,k)_{\pm}}$) to 
the next above  edge $e'$ which lies on the same kind of patch ${\Bbb P}_{e'}$ as ${\Bbb P}_{e}$. 
Clearly, $e$ and $e'$ originate in the same edge of $\mathcal{B}_{\pm} \cap \mathcal{B}_{\Delta_{\pm}}$. 
Suppose that an edge $e_{\mathrm{top}}$ of $\tau_{(i,j,k)_{\pm}}$ lies on some top patch. 
We call  $e_{\mathrm{top}}$ the {\it top edge}. 
If an edge $e_{\mathrm{bot}}$ of $\tau_{(i,j,k)_{\pm}}$ lies on some bottom patch, 
then we call $e_{\mathrm{bot}}$ the {\it bottom edge}.

We first consider the non-degenerate class $(i,j,k)_+$. 
Let us consider  the images $\Phi_{(i,j,k)_{+}}(e_{\mathrm{top}})$'s for all top edges $e_{\mathrm{top}}$'s.  
Then we can put  $\Phi_{(i,j,k)_{+}}(e_{\mathrm{top}})$'s  
in the tie neighborhood of $\tau_{(i,j,k)_{+}}$ which are transverse to the ties up to isotopy, 
where the support of the isotopy can be taken on the neighborhood of the three bottom patches, 
see  Figure~\ref{fig_map1_precise}(2). 
Thus $\tau_{(i,j,k)_{+}}$ is invariant under $\phi_{(i,j,k)_{+}}$. 
 In fact we can get the edge path 
  $\mathfrak{p}_{(i,j,k)_+}(e_{\mathrm{top}})$ 
from Figure~\ref{fig_map1_precise}(2): 
The left of (2) of the same figure illustrates 
the union of all bottom edges of $\tau_{(i,j,k)_+}$  which lie on the union of all bottom patches. 
The right of (2) of the same figure shows the image of all top edges under $\Phi_{(i,j,k)_+}$. 
One can find from the right in (2) of the same figure that 
$\mathfrak{p}_{(i,j,k)_+}(p_k)$ and  $\mathfrak{p}_{(i,j,k)_+}(q_k)$ 
pass through the three edges $r_1$, $p_1$,  $s_1$, and  the two edges $r_1$, $p_1$ respectively. 
The edge path $\mathfrak{p}_{(i,j,k)_+}(r_j)$ passes through  the two edges $q_1$, $s_1$.

Note that by Remark~\ref{rem_real-edge}, we see that 
edges of $\tau_{(i,j,k)_+}$ which originate in the  edges $p,q,r$ and $s$ 
are real edges for $\mathfrak{p}_{(i,j,k)_+}$. 
Others are infinitesimal edges. 
We explain a structure of the directed graph in Figure~\ref{fig_digraph_p}(4).  
When $e$ is an edge with some label 
(which are made thick in the figure), then the end points of $e$ are the vertices 
having the same origin ($p$, $q$, $r$ or $s$) on $\mathcal{B}_+$. 
Suppose that  
$e$ is an edge whose length equals $k-1$, and suppose that 
 $e$ has end points $p_1$ and $p_k$ having the same origin $p$. 
Then the edge $e$ with labeling $k-1$ 
corresponds to the following edge path with length $k-1$: 
$$p_1 \rightarrow p_2 \rightarrow \cdots \rightarrow p_{k-1} \rightarrow p_k.$$ 
In particular all vertices between $p_1$ and $p_k$ have the same origin $p$ on $\mathcal{B}_+$. 
Putting these things together, we can check that 
 metrized, directed graph $\Gamma_{(i,j,k)_+}$   in Figure~\ref{fig_digraph_p}(4) 
 is the one induced by 
$\mathfrak{p}_{(i,j,k)_+}: \tau_{(i,j,k)_{+}} \rightarrow \tau_{(i,j,k)_{+}}$.

We turn to the non-degenerate class $(i,j,k)_-$. 
In the same manner as in the class $(i,j,k)_+$, 
we can see that 
$\tau_{(i,j,k)_-}$ is invariant under $\phi_{(i,j,k)_-}$. 
The edges of $\tau_{(i,j,k)_-}$ which originate in the  edges $p,q,r$ and $u$ 
are real edges for the train track representative $\mathfrak{p}_{(i,j,k)_-}$ of $\phi_{(i,j,k)_-}$. 
Others are infinitesimal edges. 
A hint to obtain $\mathfrak{p}_{(i,j,k)_-}(e_{\mathrm{top}})$ 
is given in Figure~\ref{fig_map2_precise}(2). 
The left of (2) in the same figure illustrates 
the union of all bottom edges of $\tau_{(i,j,k)_-}$  which lie on the union of all bottom patches. 
The right of (2) in the same figure shows the image of all top edges under $\Phi_{(i,j,k)_-}$. 
The images under $\mathfrak{p}_{(i,j,k)_-}$ of  all top edges  but  $r_j$ 
are edge paths written by bottom edges. 
For example, $\mathfrak{p}_{(i,j,k)_-}(u_i)$ is an edge path which passes through $r_1$ and $p_1$. 
On the other hand, 
to put  $\Phi_{(i,j,k)_-}(r_j)$ in the tie neighborhood of $\tau_{(i,j,k)_{-}}$ 
one needs to make $\Phi_{(i,j,k)_-}(r_j)$ (up to isotopy) across the segment $K$. 
An analysis  to identify boundaries of patches for building $F_{(i,j,k)_-}$  
enables us to get the image $\mathfrak{p}_{(i,j,k)_-}(r_j)$. 
We can verify that 
the directed graph $\Gamma_{(i,j,k)_-}$  given  in Figure~\ref{fig_digraph_n}(2) (resp. (4)) is the one induced by 
$\mathfrak{p}_{(i,j,k)_-}$ if $0<i<k$ (resp. $i \ge k>0$).  
The edge path $\mathfrak{p}_{(i,j,k)_-}(r_j)$ has length $\ell+2$, where 
$\ell \ge 0$ is an integer such that 
$0 \le i-k\ell \le k-1$.  
\end{proof}

\begin{rem}
Metrized and directed graphs  $\Gamma_{(i,j,k)_{\pm}}$ for non-degenerate  classes 
can recover ones for  degenerate classes. 
See Figures~\ref{fig_p_Train}(1)(2)(3) and \ref{fig_digraph_p}(1)(3). 
To see this, consider $\Gamma_{(i,j,k)_+}$ for the non-degenerate class   $(i,j,k)_+$. 
We have the edge $p_k \to s_1$ and the edge path 
from $s_1$ to $p_1$ for $\Gamma_{(i,j,k)_+}$, see  Figure~\ref{fig_digraph_p}(4). 
Connecting these edge paths, we have the edge path  from $p_k$ to $p_1$ with length $>1$, see Figure~\ref{fig_digraph_p}(4). 
This determines {\it the edge} $p_k \to p_1$ with length $1$ in the degenerate class  $(0,j,k)_+= (j,k)_0$ 
by `eliminating'  vertices $s_1, \cdots, s_i$, see Figure~\ref{fig_digraph_p}(3). 
Another example is this. 
We have the edge $r_j \to s_1$ and the edge path from $s_1$ to $p_1$ for the non-degenerate class $(i,j,k)_+$, 
see Figure~\ref{fig_digraph_p}(4). 
They determine {\it the edge}  $r_j \to p_1$ with length $1$ for the degenerate class $(0,j,k)_+= (j,k)_0$, 
see Figure~\ref{fig_digraph_p}(3).
\end{rem}

\subsection{Curve complexes $G_{(i,j,k)_{\pm}}$ and their clique polynomials $Q_{(i,j,k)_{\pm}}(t)$} 
\label{subsection_Clique}

We define some graphs. 
Let $K_{m,n}$ denote the complete bipartite graph with $m$ and $n$ vertices. 
We denote by $K_{m,n}^{**}$, the disjoint union of $K_{m,n}$ and the graph with two vertices and with no edges.

The metrized, directed graphs $\Gamma_{(i,j,k)_+}$'s and $\Gamma_{(i,j,k)_-}$'s  
were given in Section~\ref{subsection_TTmap}. 
Here we exhibit their curve complexes $G_{(i,j,k)_+}$'s in Figure~\ref{fig_wgraph_p} 
and $G_{(i,j,k)_-}$'s  in Figure~\ref{fig_wgraph_n}.  
In the next proposition,  
we give the clique polynomial $Q_{(i,j,k)_{\pm}}$ for the computation of $\lambda_{(i,j,k)_{\pm}}$. 
We find that it is equal to the polynomial $f_{(i,j,k)_{\pm}}(t)$ in Lemma~\ref{lem_summary2}(1).

\begin{prop}
\label{prop_clique}
The clique polynomial $Q_{(i,j,k)_{\pm}}(t)$ of  the curve complex $G_{(i,j,k)_{\pm}}$ 
is the following  reciprocal polynomial 
$$Q_{(i,j,k)_{\pm}}(t)= 1 - (t^k+ t^{i+k}+ t^{j+k}+ t^{i+j+k})+ t^{i+j+2k}.$$ 
In particular, 
the dilatation $\lambda_{(i,j,k)_+} = \lambda_{(i,j,k)_-} $ is the largest root of 
$Q_{(i,j,k)_+}(t)= Q_{(i,j,k)_-}(t)$. 
\end{prop}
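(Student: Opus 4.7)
The plan is to compute $Q_{(i,j,k)_{\pm}}(t)$ directly from its combinatorial definition and then combine this with Theorem~\ref{thm_McMullen} (McMullen) and equation~(\ref{equation_dilatation}). These two facts together imply that $1/\lambda_{(i,j,k)_{\pm}}$ is the smallest positive root of $Q_{(i,j,k)_{\pm}}(t)$, so the main work is to enumerate the cliques of the curve complex $G_{(i,j,k)_{\pm}}$.

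First I would read off the simple directed cycles of $\Gamma_{(i,j,k)_{\pm}}$ from Figures~\ref{fig_digraph_p} and \ref{fig_digraph_n}, whose structure was determined in Lemma~\ref{lem_invariant}. For non-degenerate $(i,j,k)_+$, the directed edges emanating from the top vertices $p_k$, $q_k$, $r_j$, $s_i$ (identified in the proof of Lemma~\ref{lem_invariant}) together with the chain edges $p_\ell \to p_{\ell+1}$ and their analogues permit an elementary inspection: exactly four simple directed cycles exist, of weights $k$, $i+k$, $j+k$, and $i+j+k$. These become the four vertices of $G_{(i,j,k)_+}$ and contribute $-t^k - t^{i+k} - t^{j+k} - t^{i+j+k}$ to the clique polynomial.

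Next I would determine which pairs of simple cycles are vertex-disjoint, hence joined by an edge in $G_{(i,j,k)_+}$. From the combinatorics of $\Gamma_{(i,j,k)_+}$, exactly one such pair occurs, namely the cycle of weight $k$ (through the $p$-chain alone) and the cycle of weight $i+j+k$ (through the $q$-, $r$-, $s$-chains); every other pair meets along the $p$-chain. This gives the single $2$-vertex clique of weight $i+j+2k$ and a contribution $+t^{i+j+2k}$, while no three simple cycles can be pairwise disjoint. Adding the empty clique's contribution of $1$ yields the claimed expression for $Q_{(i,j,k)_+}(t)$. The degenerate subcases fall out by collapsing absent chains, and the $-$ case is entirely parallel using Figure~\ref{fig_digraph_n} once the $\ell$-dependence of one edge length (with $0 \le i - k\ell \le k-1$) is tracked.

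For the final statement, two routes close the argument. On the one hand, one checks that
\[
t^{i+j+2k}\,Q_{(i,j,k)_{\pm}}(1/t) = Q_{(i,j,k)_{\pm}}(t),
\]
so $Q_{(i,j,k)_{\pm}}$ is reciprocal and its largest positive root equals $\lambda_{(i,j,k)_{\pm}}$ via Theorem~\ref{thm_McMullen} and (\ref{equation_dilatation}). On the other hand, the explicit formula shows $Q_{(i,j,k)_{\pm}}(t) = f_{(i,j,k)_{\pm}}(t)$ from Lemma~\ref{lem_summary2}(1), which gives the same conclusion and also verifies the equality $\lambda_{(i,j,k)_+} = \lambda_{(i,j,k)_-}$ as a bonus. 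The principal obstacle is the cycle enumeration in the $-$ case when $i \ge k$: the length $\ell+2$ appearing in $\Gamma_{(i,j,k)_-}$ varies with $(i,k)$, and one must verify that the four simple-cycle weights are nonetheless invariant under $\ell$. A length-accounting argument using $i - k\ell = (i+k) - k(\ell+1)$ absorbs the $\ell$-dependence into the chain contributions and shows that the four weights are intrinsic to $(i,j,k)$.
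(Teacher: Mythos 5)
Your enumeration of simple cycles does not match the curve complexes the paper actually exhibits, and this is a real gap rather than a cosmetic one. You claim there are ``exactly four simple directed cycles'' in $\Gamma_{(i,j,k)_+}$, so that $G_{(i,j,k)_+}$ has four vertices joined by a single edge. But Figure~\ref{fig_wgraph_p} records $G_{(i,j,k)_+} = K_{1,2}^{**}$ for the non-degenerate case (five vertices, two edges) and $K_{1,3}^{**}$ when $j=0$ (six vertices, three edges); Figure~\ref{fig_wgraph_n} records $K_{1,1}^{**}$ when $0 < i < k$ (four vertices, one edge --- the only sub-case where your count is right) and $K_{1,\ell+1}^{**}$ with $\ell + 4$ vertices and $\ell+1$ edges when $i \ge k$. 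So in most sub-cases there are strictly more than four simple cycles, and the stated formula for $Q_{(i,j,k)_{\pm}}(t)$ arises only after cancellation in the alternating sum $\sum_K (-1)^{\#K} t^{w(K)}$: several singleton terms have the same weight as multicurve terms and cancel. You do not account for this cancellation at all. Your ``length-accounting argument'' for the $-$ case with $i \ge k$ does not rescue the count either --- the curve complex genuinely grows with $\ell$, so the $\ell$-dependence cannot be ``absorbed'' into four fixed cycle weights; it must be absorbed by cancellation inside the clique sum.

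The closing step of your argument (McMullen's Theorem~\ref{thm_McMullen} giving $1/\lambda_{(i,j,k)_{\pm}}$ as the smallest positive root, then reciprocity $t^{i+j+2k}Q(1/t) = Q(t)$ promoting this to the largest root) is correct and is exactly the paper's route; the cross-check against $f_{(i,j,k)_{\pm}}$ from Lemma~\ref{lem_summary2}(1) is also fine. But the core computation of $Q_{(i,j,k)_{\pm}}$ is what the proposition actually asserts, and your derivation of it would fail if carried out honestly on, say, the non-degenerate $K_{1,2}^{**}$ case --- you would first get five singleton terms and two edge terms, and then need to observe that one pair cancels. To repair the proof you should enumerate all simple cycles and multicurves in each of the four graph families of Figures~\ref{fig_digraph_p}--\ref{fig_digraph_n}, compute their weights, write down the full alternating sum, and exhibit the cancellations explicitly.
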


\begin{proof}
It is straightforward to compute the clique polynomial $Q_{(i,j,k)_{\pm}}(t)$. 
The dilatation $\lambda_{(i,j,k)_{\pm}}$ equals the growth rate of $\Gamma_{(i,j,k)_{\pm}}$. 
By Theorem~\ref{thm_McMullen}, 
$\tfrac{1}{\lambda_{(i,j,k)_{\pm}}}$ is the smallest root of  $Q_{(i,j,k)_{\pm}}(t)$. 
Note that $Q_{(i,j,k)_{\pm}}(t)$ is a reciprocal polynomial, 
i.e, 
$Q_{(i,j,k)_{\pm}}(t)= t^{i+j+2k} Q_{(i,j,k)_{\pm}}(t^{-1})$.  
Thus 
the largest root of $Q_{(i,j,k)_{\pm}}(t)$ equals $\lambda_{(i,j,k)_{\pm}}$. 
\end{proof}

\begin{center}
\begin{figure}
\includegraphics[width=4in]{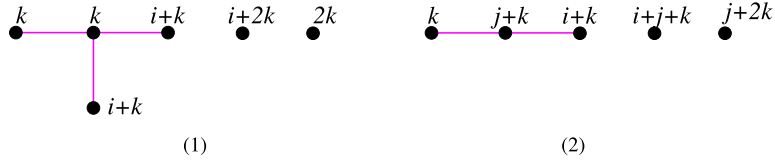}
\caption{$G_a$ for $a = (i,j,k)_+$. 
(1) $G_a= K_{1,3}^{**}$ if 
$a= (i,0,k)_+$ including  the case $i=0$ and $k>0$, see Figure~\ref{fig_digraph_p}(1)(2). 
(2)  $G_a= K_{1,2}^{**}$  if 
$a= (i,j,k)_+$ including the case $i=0$, $j>0$ and $k>0$, see Figure~\ref{fig_digraph_p}(3)(4).} 
\label{fig_wgraph_p}
\end{figure}
\end{center}

\begin{center}
\begin{figure}
\includegraphics[width=4in]{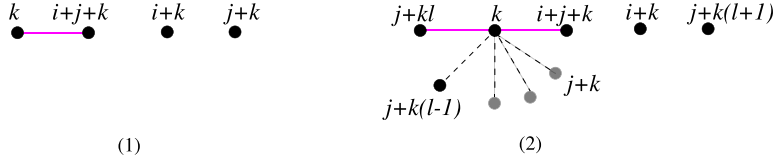}
\caption{$G_a$ for $a= (i,j,k)_- $. 
(1) $G_a= K_{1,1}^{**}$ 
if $a= (i,j,k)_-$ with 
$0< i < k$ including the case $j=0$,
 see Figure~\ref{fig_digraph_n}(1)(2).
(2) $G_a= K_{1, \ell+1}^{**}$ 
if $a= (i,j,k)_-$ with 
$i \ge k>0$ including the case $j=0$, 
 see Figure~\ref{fig_digraph_n}(3)(4). 
Here $\ell \ge 1$ is an integer such that 
$0 \le i-k\ell \le k-1$. 
The weights of $\ell+1$ vertices of the subgraph $K_{1, \ell+1}$ surrounding the centered vertex 
are given by $i+j+k$, $j+k, \cdots, j+ k(\ell-1), j+k\ell$ clockwise.}  
\label{fig_wgraph_n}
\end{figure}
\end{center}

\begin{proof}[Proof of Theorem~\ref{thm_algorithm}]
Let $a$ be a fibered class of $N$. 
By  a symmetry of Thurston norm ball $U_N$ and 
by Lemma~\ref{lem_summary}(4), 
we may suppose that $a$ is of the form $(i,j,k)_{\pm} \in int(C_{\Delta})$. 
Suppose that $a= (0,0,1)_{\pm}= (0,1)_0$. 
In this case, 
we have constructed  $\Phi_a: F_a \rightarrow F_a$ and $\mathfrak{p}_a: \tau_a \rightarrow \tau_a$ explicitly 
in Section~\ref{subsection_seed}.

Let us consider other fibered classes $a= (i,j,k)_{\pm}$'s. 
The explicit construction of the monodromy $\Phi_a: F_a \rightarrow F_a$ of the fibration associated to $a$ 
is given in Section~\ref{subsection_Monodromies}. 
If $a$ is primitive, then 
the explicit construction of the desired train track representative 
$\mathfrak{p}_{a}: \tau_a \rightarrow \tau_a$ of $\phi_a$ together with 
the induced directed graph $\Gamma_a$ 
of $\mathfrak{p}_{a}$ (Figures~\ref{fig_digraph_p} and \ref{fig_digraph_n}) 
is given in Section~\ref{subsection_TTmap}. 
\end{proof}

\section{A catalogue of small dilatation pseudo-Anosovs} 
\label{section_catalogue}

\subsection{Fibered classes of Dehn fillings $N(r)$}

Recall that $T_{\beta}$ (resp. $T_{\alpha}$, $T_{\gamma}$) 
is a torus which is the boundary of a regular neighborhood of the component 
$K_2$ (resp. $K_1$, $K_3$) of the $3$ chain link $\mathcal{C}_3$. 
Recall that  $N(r)$ is the manifold obtained from $N$ by Dehn filling the cusp 
along the slope $r \in {\Bbb Q} \cup \{\infty\} $. 
It is known by  \cite{MP}  that 
$N(r)$  is hyperbolic unless 
$r \in \{\infty, -3, -2, -1, 0\}$.  
The manifolds $N(\tfrac{3}{-2})$ and $N(\tfrac{1}{-2})$ are 
the exterior of the Whitehead sister link  (i.e, $(-2,3,8)$-pretzel link) and 
the exterior of the $3$-braided link $\mathrm{br}(\sigma_1^{-1} \sigma_2)$ 
(or $6_2^2$ link in Rolfsen's table) respectively. 
Also $N(1)$ is the exterior of the Whitehead link 
(see \cite{MP} for example). 
These Dehn fillings  $N(\tfrac{3}{-2})$, $N(\tfrac{1}{-2})$ and $N(1) $ 
play an important rule to study pseudo-Anosovs with small dilatations (see \cite{KKT2}), 
which we recall quickly in Section~\ref{subsection_Infinite}.

First of all, 
we consider the relation of  fibered classes between $N$ and Dehn fillings $N(r)$.  
We suppose that 
 $N(r)$ is the manifold obtained from $N$ by Dehn filling the cusp 
specified by $T_{\beta}$ along the slope $r $. 
By using Lemma~\ref{lem_summary}(2) (which says that the boundary slope of $\partial_{\beta} F_a$ equals $\tfrac{z+x}{-y}$), 
we have the following: 
There exists a natural injection  
$$\iota_{\beta}: H_2(N(r), \partial N(r)) \rightarrow H_2(N, \partial N)$$ 
whose image equals $S_{\beta}(r)$, 
where 
\begin{equation*} 
	S_{\beta}(r)= \{(x,y,z) \in H_2(N, \partial N)\ |\ -ry= z+x\},   
\end{equation*} 
see \cite[Proposition~2.11]{KKT2}. 
We choose  $r \in {\Bbb Q} $ with $r \not\in \{-3, -2, -1, 0\}$, 
and let  $a \in S_{\beta}(r) = \mathrm{Im} \, \iota_{\beta}$  be a fibered class in  $H_2(N, \partial N)$.  
Then 
$\overline{a} = \iota_{\beta}^{-1}(a) \in H_2(N(r), \partial N(r))$  is also 
a fibered class of  $N(r)$. 
This is because 
the monodromy $\Phi_a: F_a \rightarrow F_a$ of the fibration on $N$ associated to $a$ 
extends to the monodromy $\overline{\Phi}_a$ of the fibration on $N(r)$ associated to $\overline{a}$ 
by capping each boundary component of $\partial_{\beta} F_a$ with the disk. 
If the (un)stable foliation $\mathcal{F}_a$ of $\Phi_a$ has a property such that 
each component of $\partial_{\beta} F_a$ has no $1$ prong, then 
$\mathcal{F}_a$  extends canonically 
to the (un)stable foliation  $\overline{\mathcal{F}}_a$  of    $\overline{\Phi}_a$, 
and hence $\overline{\Phi}_a$ becomes a pseudo-Anosov homeomorphism with the same dilatation as $\Phi_a$. 
We sometimes denote $N(r)$ by $N_{\beta}(r)$ 
when we need to specify the cusp which is filled. 
By using this notation, we may write 
$\overline{a} \in H_2(N_{\beta}(r), \partial N_{\beta}(r))$.

Similarly, 
when  $N(r)$  is the manifold obtained from $N$ by Dehn filling the cusp specified 
by  another torus $T_{\gamma}$  along the slope  $r$, 
we have a natural injection, 
\begin{align*}
	\iota_{\gamma} & : H_2(N(r), \partial N(r)) \to H_2(N, \partial N) 
\end{align*}
whose image $ \mathrm{Im} \, \iota_{\gamma}$ is given by 
\begin{align*}
	S_{\gamma}(r) & = \{ (x, y, z) \in H_2(N, \partial N) \, | \, -rz = x+y \}.  
\end{align*}

\subsection{Concrete examples}
\label{subsection_examples}

In the following example,  we consider some fibered classes $a$ and compute their dilatations $\lambda(a)$. 
We also exhibit 
$\tau_a \subset F_a$, its image $\Phi_a(\tau_a)$ putting into the tie neighborhood 
$\mathcal{N}(\tau_a)$ (so that it is transverse to the ties) 
and the directed graph $(\Gamma_a, {\bf 1})$.

\begin{ex}
\label{example_Ex}
\begin{enumerate}

\item[(1)]  
If  $a= (1,4,1)_+= (2,6,1)$, then $\lambda(a)  \approx  1.7220$ 
is the largest root of 
$$Q_{(1,4,1)_+}(t)= f_{(2,6,1)}(t)= (t^3+1)(t^4 - t^3 - t^2-t +1).$$  
By Lemma~\ref{lem_summary}(3),  the topological type of $F_a$ is  $\varSigma_{2,5}$. 
By Lemma~\ref{lem_summary}(7),  $\mathcal{F}_a$ is orientable. 
This  implies that 
the monodromy 
$\Phi_a: F_a \rightarrow F_a$ of the fibration  associated to $a$ 
extends to the pseudo-Anosov $\widehat{\Phi}_a: \widehat{F}_a \rightarrow \widehat{F}_a$ 
on the closed surface of genus $2$ with orientable invariant foliations 
by capping  each boundary component of $ F_a$ with the disk. 
The dilatation of $\widehat{\Phi}_a$ is same as that of $\Phi_a$, that is $\lambda(a)$. 
The minimal dilatations $\delta_2$ and $\delta_2^+$ are computed in \cite{CH} and \cite{Zhirov} respectively, 
and $\delta_2= \delta_2^+$ holds. 
In fact, $\delta_2$ is the largest root of $t^4 - t^3 - t^2-t +1$, 
and hence we have $\lambda(a)= \delta_2= \delta_2^+ $. 
Thus  $\widehat{\Phi}_a$ is a minimizer of $\delta_2= \delta_2^+$. 
See Figure~\ref{fig_6_2_1}.

\item[(2)] 
If $a= (3,1,1)_-= (1,2,-3)$,  then  $\lambda(a) \approx 1.7816$ 
is the largest root of 
$$Q_{(3,1,1)_-}(t)= f_{(1,2,-3)}(t)= t^6-t^5-t^4-t^2-t+1.$$
See Figure~\ref{fig_1_2_-3}.

\item[(3)] 
If $a= (2,3)_0= (3,5,0)$, then $\lambda(a) \approx 1.4134$ 
is the largest root of 
$$ Q_{(2,3)_0}(t)= f_{(3,5,0)}(t)= t^8- 2t^5-2t^3+1.$$
The fiber $F_a$ is homeomorphic to $\varSigma_{0,10}$. 
On the other hand, $\sharp (\partial_{\alpha} F_a) = \sharp (\partial_{\beta} F_a)=1$, and 
$\mathcal{F}_a$ has a property such that 
each component of  $\partial_{\alpha} F_a \cup \partial_{\beta} F_a$ 
has no $1$ prong. 
Hence by capping either $\partial_{\alpha} F_a$ or  $ \partial_{\beta} F_a$ with the disk, 
we get the pseudo-Anosov $\overline{\Phi}_a: \overline{F}_a \rightarrow \overline{F}_a$ 
on the $(8+1)$-punctured sphere  
with the same dilatation as $\lambda(a)$. 
Since $\overline{\Phi}_a$ fixes the boundary component 
($\partial_{\alpha} F_a$ or  $ \partial_{\beta} F_a$) of $F_a$, 
it defines the pseudo-Anosov on $D_8$. 
Lanneau and Thiffeault computed $\delta(D_8)$ in \cite{LT1}. 
We see that  $\lambda(a)$ is equal to $\delta(D_8)$,  
and hence  $\overline{\Phi}_a$ is a minimizer of $\delta(D_8)$. 
See Figure~\ref{fig_3_5_0}. 
\end{enumerate}
\end{ex}

\begin{center}
\begin{figure}
\includegraphics[width=5.5in]{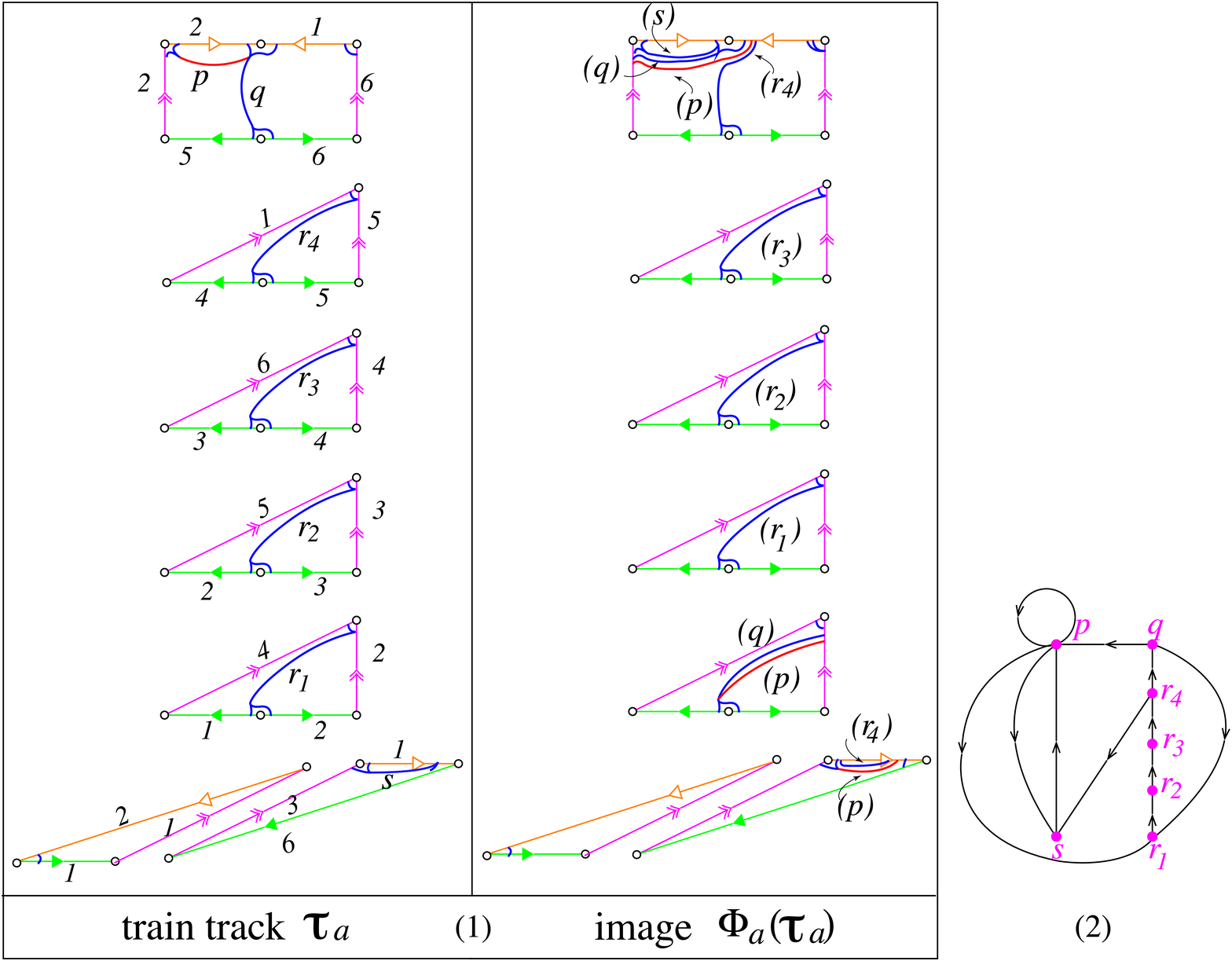}
\caption{(1) $\tau_a \subset F_a$ and $\Phi_a(\tau_a)$ (up to isotopy) 
(2) $\Gamma_a$ for $a=(1,4,1)_+$. }
\label{fig_6_2_1}
\end{figure}
\end{center}

\begin{center}
\begin{figure}
\includegraphics[width=5in]{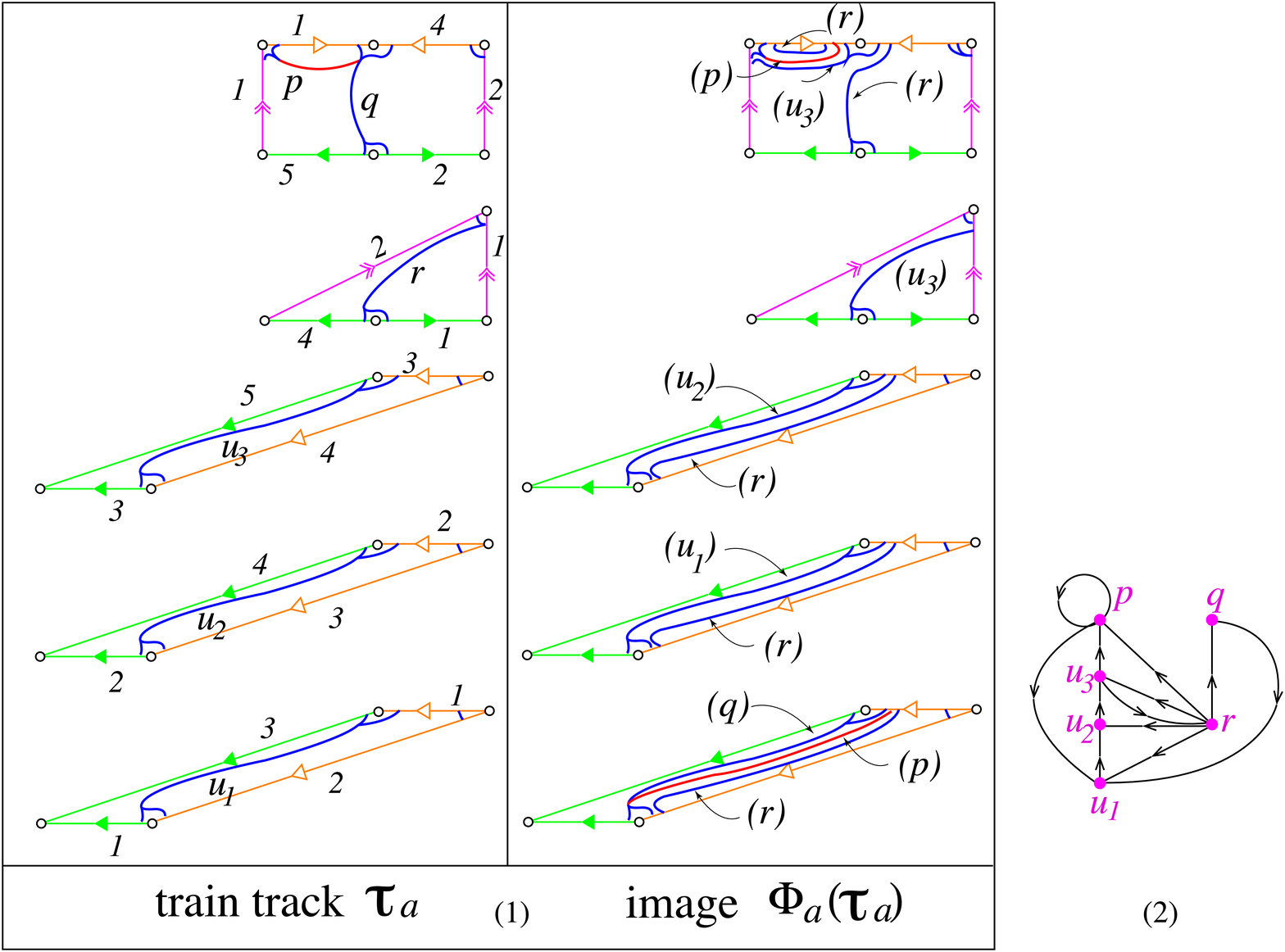}
\caption{(1) $\tau_a \subset F_a$ and $\Phi_a(\tau_a)$ (up to isotopy) 
(2) $\Gamma_a$ for $a= (3,1,1)_-$.}
\label{fig_1_2_-3}
\end{figure}
\end{center}

\begin{center}
\begin{figure}
\includegraphics[width=4.5in]{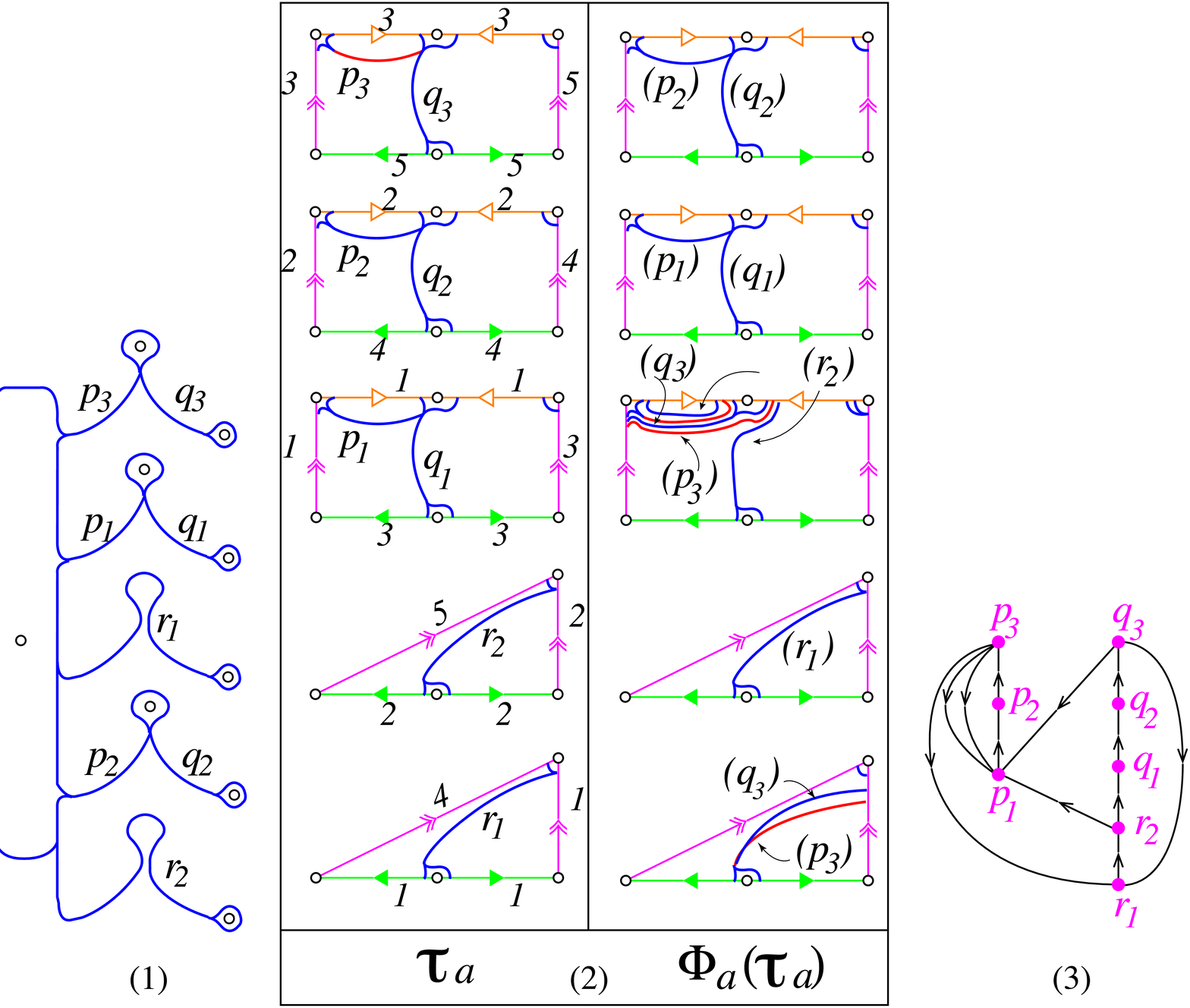}
\caption{(1) $\tau_a $ 
(2) Another description of $\tau_a \subset F_a$ and $\Phi_a(\tau_a)$ (up to isotopy) 
(3) $\Gamma_a$ for $a= (2,3)_0$.}
\label{fig_3_5_0}
\end{figure}
\end{center}

\subsection{Infinite sequences of fibered classes}
\label{subsection_Infinite}

We exhibit several infinite sequences of fibered classes of $N$, 
each of which corresponds to a subsequence of pseudo-Anosovs 
to give the upper bounds (U1), $\cdots$, (U6) except (U3) in Table~\ref{table_section}. 
Each sequence lies on the section either $S_{\beta}(r)$ for $r \in \{\tfrac{3}{-2}, -1, \tfrac{1}{-2}\}$ or 
$S_{\gamma}(r)$ for $r \in \{1, \infty\}$. 
A sequence of fibered classes in $S_{\beta}(\tfrac{3}{-2})$ (resp. $S_{\gamma}(1)$, $S_{\beta}(\tfrac{1}{-2})$) 
defines the sequence of fibered classes of the hyperbolic Dehn filling $N(\tfrac{3}{-2})$ 
(resp. $N(1)$, $N(\tfrac{1}{-2})$). 
We indicate where each sequence sits on the fibered face $\Delta$, 
see Figure~\ref{fig_candidatePL}. 
The purpose of the following examples is to construct the invariant train track $\tau_a$, the induced directed graph $\Gamma_a$ 
and curve complex $G_a$
associated to each fibered class $a$ in each infinite sequence concretely 
and to see a structure of the monodromy $\Phi_a$ of the fibration 
associated to $a$ via the curve complex $G_a$.  
We will see that the topological type of   $G_a$ is fixed 
and it is either $K_{1,2}^{**}$  or $K_{1,3}^{**}$ 
for each infinite sequence. 

Other sequences of fibered classes of $N$ with small dilatations can be found in \cite[Section~5]{Kin}.

\begin{center}
\begin{figure}
\includegraphics[width=5in]{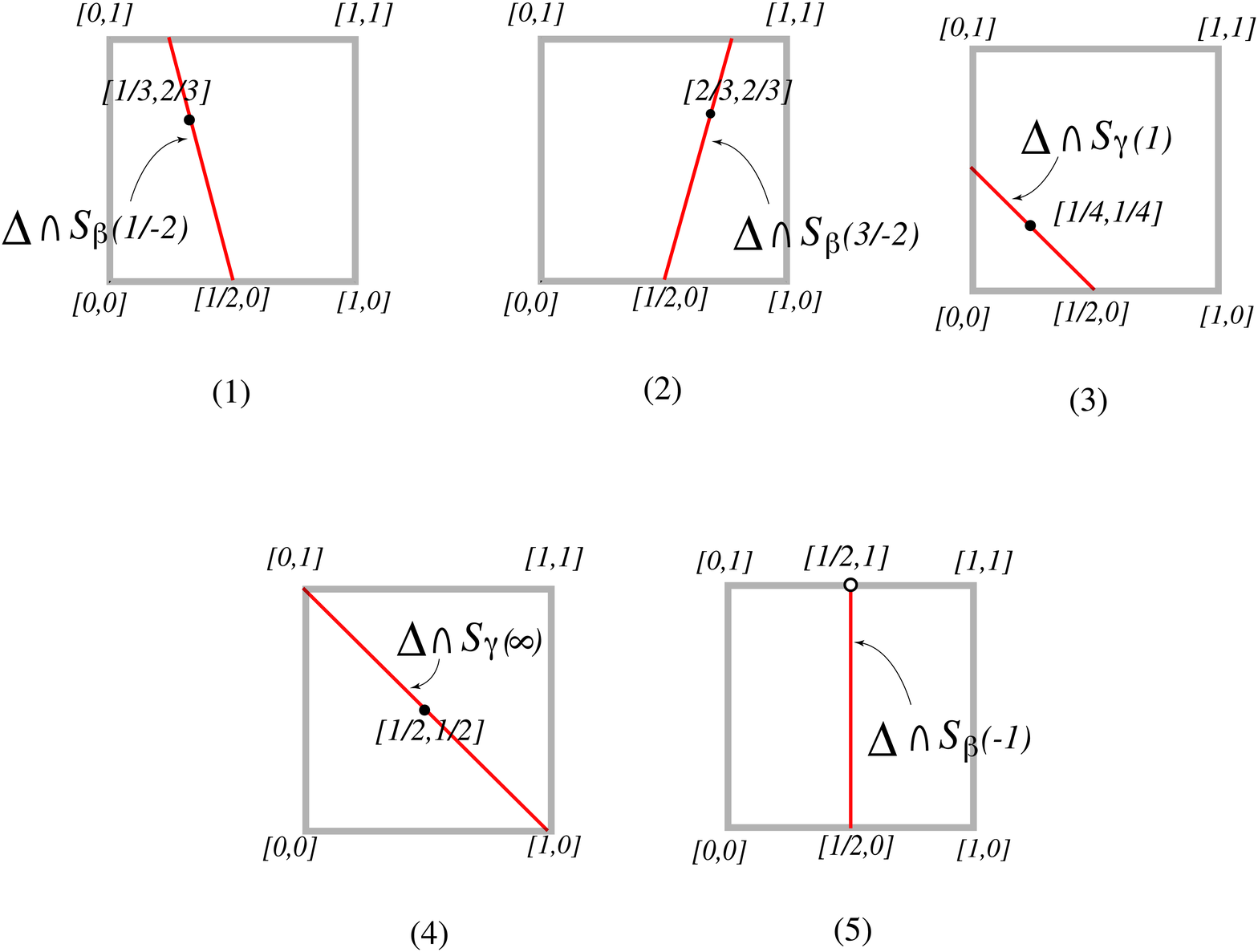}
\caption{(1) $\Delta \cap S_{\beta}(\tfrac{1}{-2})$. 
(2) $\Delta \cap S_{\beta}(\tfrac{3}{-2})$. 
(3) $\Delta \cap S_{\gamma}(1)$. 
(4) $\Delta \cap S_{\gamma}(\infty) = \Delta_0$. 
(5) $\Delta \cap S_{\beta}(-1)$.}
\label{fig_candidatePL}
\end{figure}
\end{center}

\begin{ex}[Figure~\ref{fig_ex_LT}]
\label{ex_LT}
Suppose that $g \equiv 2,4$ $\pmod{6}$. 
Take a sequence of primitive fibered classes 
$$a_g= (1, g+2, g-1)_+= (g, 2g+2,1) \in S_{\beta}(\tfrac{1}{-2}).$$
(The fibered class in Example~\ref{example_Ex}(1) is equal to $a_2$ in the present example.)  
Observe that 
the projections  $[a_g]$'s go to $ [\tfrac{1}{3}, \tfrac{2}{3}] \in int(\Delta)$ as $g$ goes to $\infty$, 
see Figure~\ref{fig_candidatePL}(1). 
By Lemma~\ref{lem_summary}(3)(7),  
it was shown in \cite{KT1} that 
the fiber $F_{a_g}$ has genus $g$, and $\mathcal{F}_{a_g}$ is orientable. 
The dilatation $\lambda_{(1, g+2, g-1)_+}$ 
is equal to the largest root of 
$$Q_{(1, g+2, g-1)_+}(t)= (t^{g+1}+1)(t^{2g}- t^{g+1}-t^g-t^{g-1}+1).$$
Since $\mathcal{F}_{a_g}$ is orientable, the monodromy 
$\Phi_{a_g}: F_{a_g} \rightarrow F_{a_g}$ of the fibration associated to $a_g$ extends canonically to the pseudo-Anosov homeomorphism 
$\widehat{\Phi}_{g}$ on the closed surface $\varSigma_g$ of genus $g$ with the dilatation $\lambda_{(1, g+2, g-1)_+}$. 
We now introduce 
the {\it Lanneau-Thiffeault polynomial} 
$$f_{(a,b)}(t)= t^{2a} - t^{a+b} - t^a - t^{a-b}+1.$$ 
Then $f_{(g,1)}(t)$ is a factor of $Q_{(1, g+2, g-1)_+}(t)$. 
If we set   $\lambda_{(g,1)}$  to be  the largest root of $f_{(g,1)}(t)$, 
then we have $\lambda_{(g,1)}= \lambda_{(1, g+2, g-1)_+} = \lambda(\widehat{\Phi}_{g})$. 
It is known that 
$\delta_2^+= \lambda_{(2,1)}$ \cite{Zhirov}, 
$\delta_4^+=\lambda_{(4,1)}$ \cite{LT} and 
$\delta_8^+= \lambda_{(8,1)}$ \cite{LT,Hironaka}. 
Lanneau and Thiffeault asked in \cite{LT} whether  the equality $\delta_g^+= \lambda_{(g,1)}$ holds  for even $g$. 
The existence of pseudo-Anosovs $\widehat{\Phi}_{g}$ for $g \equiv 2,4$ $\pmod{6}$ in the present example were 
discovered by Hironaka in  \cite{Hironaka}, see also Remark~\ref{rem_hironaka-san}. 
 The sequence of pseudo-Anosovs $\{\widehat{\Phi}_g\}$ can be used for a subsequence 
 to prove both upper bounds (U1) and (U2) in Table~\ref{table_section}. 
 In fact we have 
 $$\lim_{g \to \infty} g \log \lambda(\widehat{\Phi}_g)= \log (\tfrac{3+ \sqrt{5}}{2}).$$ 
\end{ex}

\begin{ex}[Figure~\ref{fig_ex_ori79}]
\label{ex_ori79}
An explicit value of  $\delta_7^+$ is known. 
The lower bound of $\delta_7^+$ was given in \cite{LT} and 
an example which realizes this lower bound was found in \cite{AD} and \cite{KT1} independently. 
However an explicit construction of a minimizer of $\delta_7^+$ was not given in  \cite{AD,KT1}.  
Here we construct such a minimizer.  
Suppose that $g  \equiv 7,9 \pmod {10}$. 
Following \cite{KT1}, 
we take a sequence of primitive fibered classes 
$$a_g= (g+6, 2, g)_+=(2g+6,2g+8,g+6) \in S_{\beta}(\tfrac{3}{-2}).$$
Then $F_{a_g}$ has genus $g$ (\cite{KT1}) and 
 the projections  $[a_g]$'s  go to $ [\tfrac{2}{3}, \tfrac{2}{3}]  \in int(\Delta)$ 
as $g$ goes to $\infty$, see Figure~\ref{fig_candidatePL}(2). 
We have 
$$Q_{ (g+6, 2, g)_+}(t)= (t^{g+4}+1) (t^{2g+4} -t^{g+4}-t^{g+2}-t^{g}+1),$$
and the dilatation $\lambda_{ (g+6, 2, g)_+}$ is equal to the largest root of this polynomial. 
In particular $\lambda_{ (g+6, 2, g)_+}$ is equal to the largest root of the second factor of $Q_{ (g+6, 2, g)_+}(t)$. 
Lemma~\ref{lem_summary}(7) ensures that the monodromy 
$\Phi_{a_g}: F_{a_g} \rightarrow F_{a_g}$ of the fibration  associated to $a_g$ 
extends to the pseudo-Anosov $\widehat{\Phi}_g: \varSigma_g \rightarrow \varSigma_g$  
with orientable invariant foliations. 
It is known that $\delta_7^+$ is equal to the largest root of the second factor of $Q_{(13, 2, 7)_+}(t)$ as above. 
Hence $\delta_7^+ = \lambda_{(13, 2, 7)_+}$, and 
 the extension $\widehat{\Phi}_7$ of ${\Phi}_{(13, 2, 7)_+}$ is a minimizer of $\delta_7^+$. 
The existence of  the sequence of pseudo-Anosovs $\{\widehat{\Phi}_g\}$ was discovered in \cite{AD,KT1}. 
We have  
 $$\lim_{g \to \infty} g \log \lambda(\widehat{\Phi}_g)= \log (\tfrac{3+ \sqrt{5}}{2}),$$
 see the upper bounds (U1) and (U2). 
\end{ex}

\begin{ex}[Figure~\ref{fig_ex_ori15}]
\label{ex_ori15}
Suppose that $g  \equiv 1,5 \pmod {10}$. 
Following \cite{KT1}, 
we take a sequence of primitive fibered classes 
$$a_g= (g+10,4,g-2)_+= (2g+8,2g+12,g+10)  \in S_{\beta}(\tfrac{3}{-2}).$$
Then the fiber $F_g$ has genus $g$ (\cite{KT1}), and  
the dilatation $\lambda_{(g+10,4,g-2)_+}$ is equal to the largest root of 
$$Q_{(g+10,4,g-2)_+}(t)=  (t^{g+6}+1)(t^{2g+4}-t^{g+6}-t^{g+2}-t^{g-2}+1).$$
Observe that the projections $[a_g]$'s go to $  [\tfrac{2}{3}, \tfrac{2}{3}] \in int(\Delta)$ as $g$ goes to $\infty$, 
see Figure~\ref{fig_candidatePL}(2). 
We see that 
the monodromy $\Phi_{a_g}: F_{a_g} \rightarrow F_{a_g}$ of the fibration associated to $a_g$ 
extends to the pseudo-Anosov 
$\widehat{\Phi}_g$ on $\varSigma_g$ with orientable invariant foliations. 
The dilatation of $\widehat{\Phi}_g$ is same as the dilatation of $\Phi_{a_g}$, that is $\lambda_{(g+10,4,g-2)_+}$. 
The equality $\delta_5^+ = \lambda_{(15,4,3)_+}$ holds, see \cite{LT}. 
 The sequence of pseudo-Anosovs $\{\widehat{\Phi}_g\}$ satisfies 
 $$\lim_{g \to \infty} g \log \lambda(\widehat{\Phi}_g)= \log (\tfrac{3+ \sqrt{5}}{2}),$$
 see the upper bounds (U1) and (U2). 
\end{ex}

\begin{ex}[Figure~\ref{fig_ex_whitehead}]
\label{ex_whitehead}
Following \cite{KKT2}, 
we take a sequence of primitive fibered classes 
$$a_n= (2n-1, 1, n-1)_-= (n-1,n,-2n+1) \in S_{\gamma}(1) .$$
The dilatation $\lambda_{(2n-1, 1, n-1)_-}$ is equal to the largest root of 
$$Q_{(2n-1, 1, n-1)_-}(t)= t^{4n-2} - t^{3n-1}- t^{3n-2}-t^n - t^{n-1}+1.$$ 
The projections of $a_n$'s go to $[\tfrac{1}{4}, \tfrac{1}{4}] \in int(\Delta)$ as $n$ goes to $\infty$, 
see  Figure~\ref{fig_candidatePL}(3). 
We find that $a_n$ defines a fibered class $\overline{a_n} \in H_2(N_{\gamma}(1), \partial N_{\gamma}(1))$, 
see  the beginning of Section~\ref{section_catalogue}. 
By using Lemma~\ref{lem_summary}(6), 
we see that the monodromy of the fibration on $N(1)$ associated to $\overline{a_n} $ 
has the dilatation $\lambda_{(2n-1, 1, n-1)_-}$. 
Observe that the topological type of the fiber 
(i.e, minimal representative) of $\overline{a_n}$ is homeomorphic to $\varSigma_{1, 2n-1}$. 
The sequence of the monodromies $\Phi_{\overline{a_n}}$ of fibrations associated to $\overline{a_n}$'s 
on the Whitehead link exterior $N_{\gamma}(1)$ 
can be used for a subsequence to prove (U5). 
In fact, we have 
$$\lim_{n \to \infty} (2n-1) \log \lambda(\Phi_{\overline{a_n}}) = 2 \log \delta(D_4).$$
\end{ex}

\begin{ex}[Figure~\ref{fig_ex_braid1}] 
\label{ex_braid1} 
For $n \ge 3$, 
consider a sequence of primitive fibered classes 
$$a_n= (1,n-1)_0= (n-1,n,0) \in S_{\gamma}(\infty),$$ 
which is studied in \cite{KT}. 
We see that $F_{a_n}$ is homeomorphic to $\varSigma_{0,2n+1}$. 
The projection of $a_n$ goes to $[\tfrac{1}{2}, \tfrac{1}{2}] \in int(\Delta)$ as $n$ goes to $\infty$, 
see Figure~\ref{fig_candidatePL}(4). 
The dilatation $\lambda_{(1,n-1)_0}$ equals the largest root of 
$$Q_{(1,n-1)_0}(t)=    t^{2n-1}- 2(t^{n-1}+ t^n)+1.$$ 
By using the same arguments as in Example~\ref{example_Ex}(3), 
we see that 
the monodromy $\Phi_{a_n}: \varSigma_{0,2n+1} \rightarrow \varSigma_{0,2n+1}$ of the fibration associated to $a_n$ 
defines the pseudo-Anosov $\overline{\Phi}_{a_n}: D_{2n-1} \rightarrow D_{2n-1}$ 
 with the same dilatation $\lambda_{(1,n-1)_0}$. 
Such a pseudo-Anosov homeomorphism $\overline{\Phi}_{a_n}: D_{2n-1} \rightarrow D_{2n-1}$ is studied in \cite{HK}. 
It is known that $\delta(D_5)= \lambda_{(1,2)_0}$, see \cite{HS} and 
$\delta(D_7) = \lambda_{(1,3)_0}$, see \cite{LT1}. 
The sequence of pseudo-Anosovs $\{\Phi_{a_n}: \varSigma_{0, 2n-1} \rightarrow \varSigma_{0, 2n-1}\}$ 
can be used for a subsequence to prove (U4): 
$$\lim_{n \to \infty} (2n-1) \log \lambda(\Phi_{a_n}) = 2 \log (2+ \sqrt{3}).$$
\end{ex}

\begin{ex}[Figure~\ref{fig_ex_braid2}]
\label{ex_braid2} 
For $n \ge 2$, 
let us consider a sequence of fibered classes 
$$a_n= (2,2n-1)_0= (2n-1,2n+1,0) \in S_{\gamma}(\infty),$$
which is studied in \cite{KT}. 
(The fibered class in Example~\ref{example_Ex}(3) is equal to $a_2$ in the present example.)  
We see that $F_{a_n}$ is homeomorphic to $\varSigma_{0,4n+2}$. 
The projection of $a_n$ goes to $[\tfrac{1}{2}, \tfrac{1}{2}]  \in int(\Delta)$ as $n$ goes to $\infty$. 
The dilatation $\lambda_{(2,2n-1)_0}$ equals the largest root of 
$$Q_{(2,2n-1)_0}(t)=   t^{4n}-2(t^{2n-1}+ t^{2n+1}) +1. $$
By using the same arguments as in Example~\ref{example_Ex}(3), 
we see that 
the monodromy $\Phi_{a_n}: \varSigma_{0,4n+2} \rightarrow \varSigma_{0,4n+2}$ of the fibration associated to $a_n$ 
defines the pseudo-Anosov $\overline{\Phi}_{a_n}: D_{4n} \rightarrow D_{4n}$ with the  dilatation $\lambda_{(2,2n-1)_0}$. 
As we have seen in Example~\ref{example_Ex}(3) that 
$\delta(D_8)= \lambda_{(2,3)_0}$ holds. 
Such a pseudo-Anosov homeomorphism $\overline{\Phi}_{a_n}: D_{4n} \rightarrow D_{4n}$  is also studied in \cite{Venzke}. 
The sequence of pseudo-Anosovs $\{\Phi_{a_n}: \varSigma_{0,4n+2} \rightarrow \varSigma_{0,4n+2}\}$ 
can be used for a subsequence to prove (U4): 
$$\lim_{n \to \infty} (4n+2) \log \lambda(\Phi_{a_n}) = 2 \log (2+ \sqrt{3}).$$
\end{ex}

\begin{ex}[Figure~\ref{fig_ex_Tsai}]
\label{ex_Tsai}
Throughout  this example, 
we fix $g \ge 0$. 
The following sequence of fibered classes is used for the proof of Theorem~\ref{thm_boundary}, see \cite{KT2}: 
$$a_{(g,p)}= (p-g, p-g, 2g+1)_+= (p+g+1, 2p+1, p-g) \in S_{\beta}(-1).$$ 
The projections $[a_{(g,p)}]$'s go to $[\tfrac{1}{2},1] \in  \partial \Delta$  as $p$ goes to $\infty$, 
see Figure~\ref{fig_candidatePL}(5). 
It is not hard to see that  $a_{(g,p)}$ is primitive if and only if $2g+1$ and $p+g+1$ are relatively prime. 
If $a_{(g,p)}$ is primitive, then 
the fiber $F_{a_{(g,p)}}$ is homeomorphic to $\varSigma_{g, 2p+4}$, and we have 
$$\sharp(\partial_{\beta} F_{(g,p)})= 2p+1\ \mbox{and}\  
\sharp(\partial_{\alpha} F_{(g,p)})+ \sharp(\partial_{\gamma} F_{(g,p)})= 3.$$
There exists a sequence of primitive fibered classes $\{a_{(g, p_i)}\}_{i=0}^{\infty}$ 
with $p_i \to \infty$ when $i \to \infty$, 
where $p_i$ depends on $g$. 
(For example, take $p_i= (g+1)+ i(2g+1)$.) 
This means that   $F_{a_{(g, p_i)}}$ has genus $g$ and  the number of the boundary components of $F_{a_{(g, p_i)}}$ 
(which is equal to $2p_i+4$) 
goes to $\infty$ as $i$ does. 
As we have seen above, 
the projection of such a class $a_{(g,p_i)}$ goes to the same point $[\tfrac{1}{2},1] \in \partial \Delta$ 
(which does not depend on $g$) as $p_i$ goes to $\infty$. 
\end{ex}

\begin{center}
\begin{figure}
\includegraphics[width=6in]{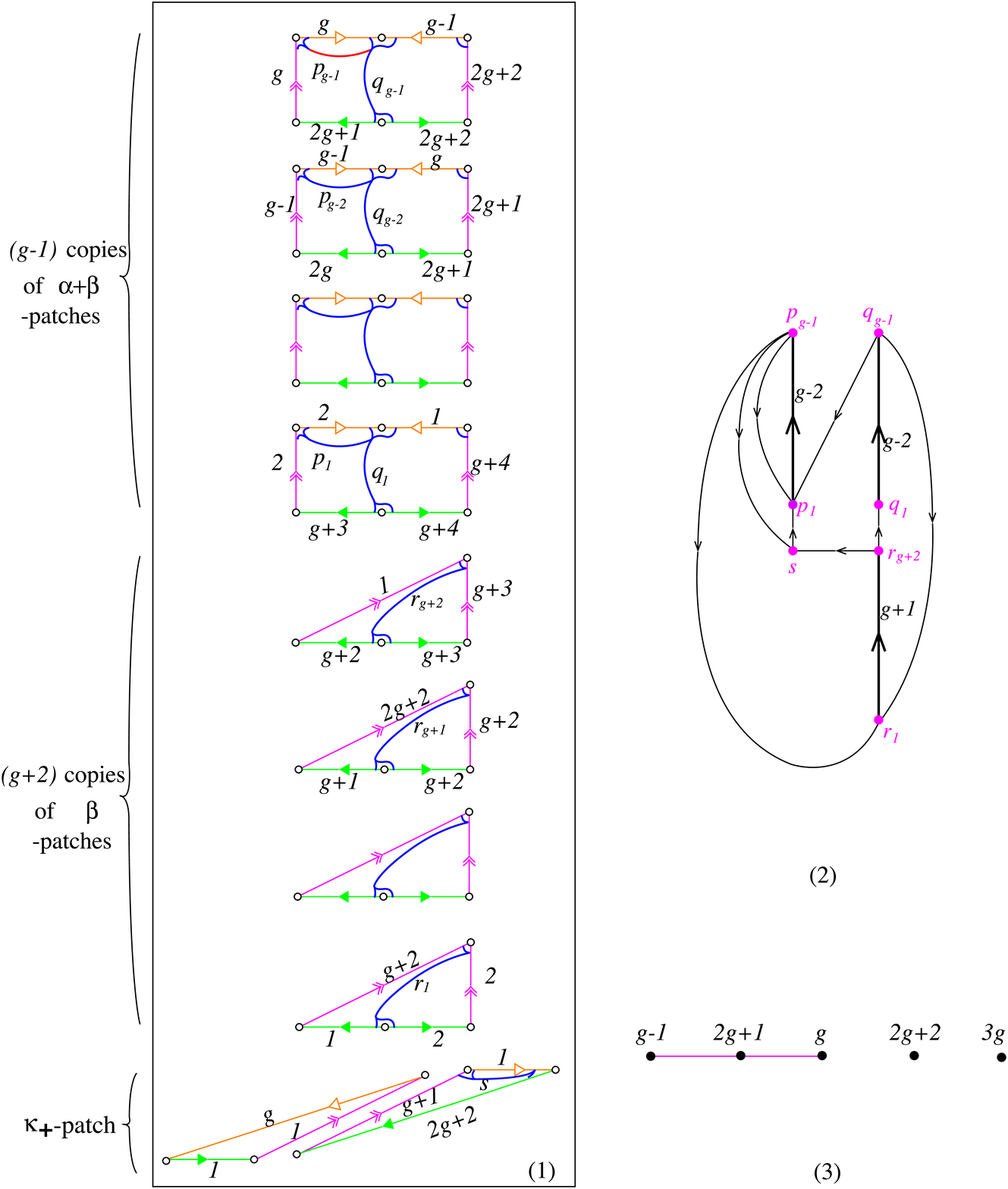}
\caption{Example~\ref{ex_LT}. 
(1) $\tau_{a_g} \subset F_{a_g}$ 
(2) $\Gamma_{a_g}$ 
(3) $G_{a_g}$ for $a_g= (1, g+2, g-1)_+$.}
\label{fig_ex_LT}
\end{figure}
\end{center}

\begin{center}
\begin{figure}
\includegraphics[width=6in]{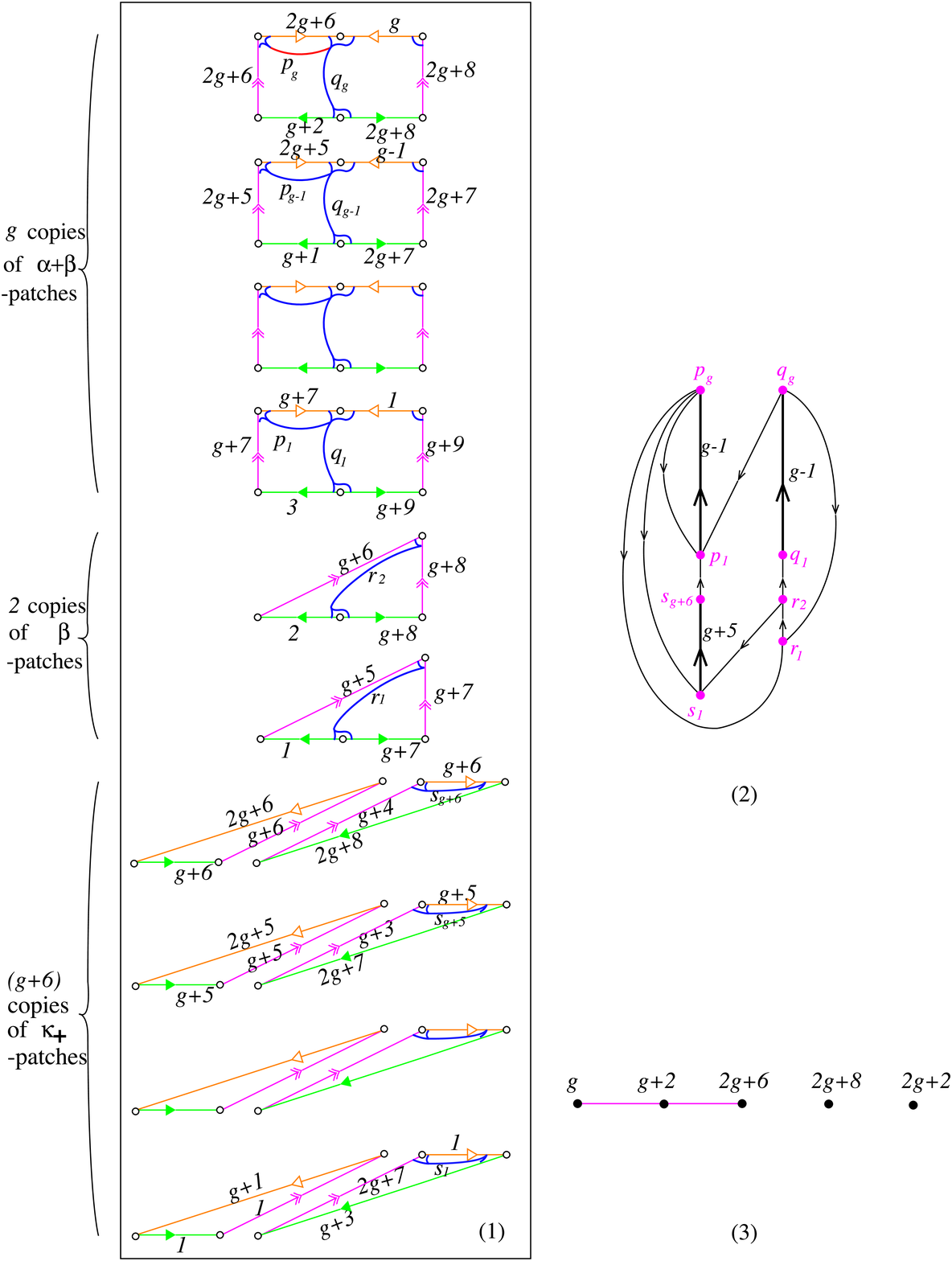}
\caption{Example~\ref{ex_ori79}. 
(1) $\tau_{a_g} \subset F_{a_g}$ 
(2) $\Gamma_{a_g}$ 
(3) $G_{a_g}$ for $a_g= (g+6, 2, g)_+$.}
\label{fig_ex_ori79}
\end{figure}
\end{center}

\begin{center}
\begin{figure}
\includegraphics[width=5.7in]{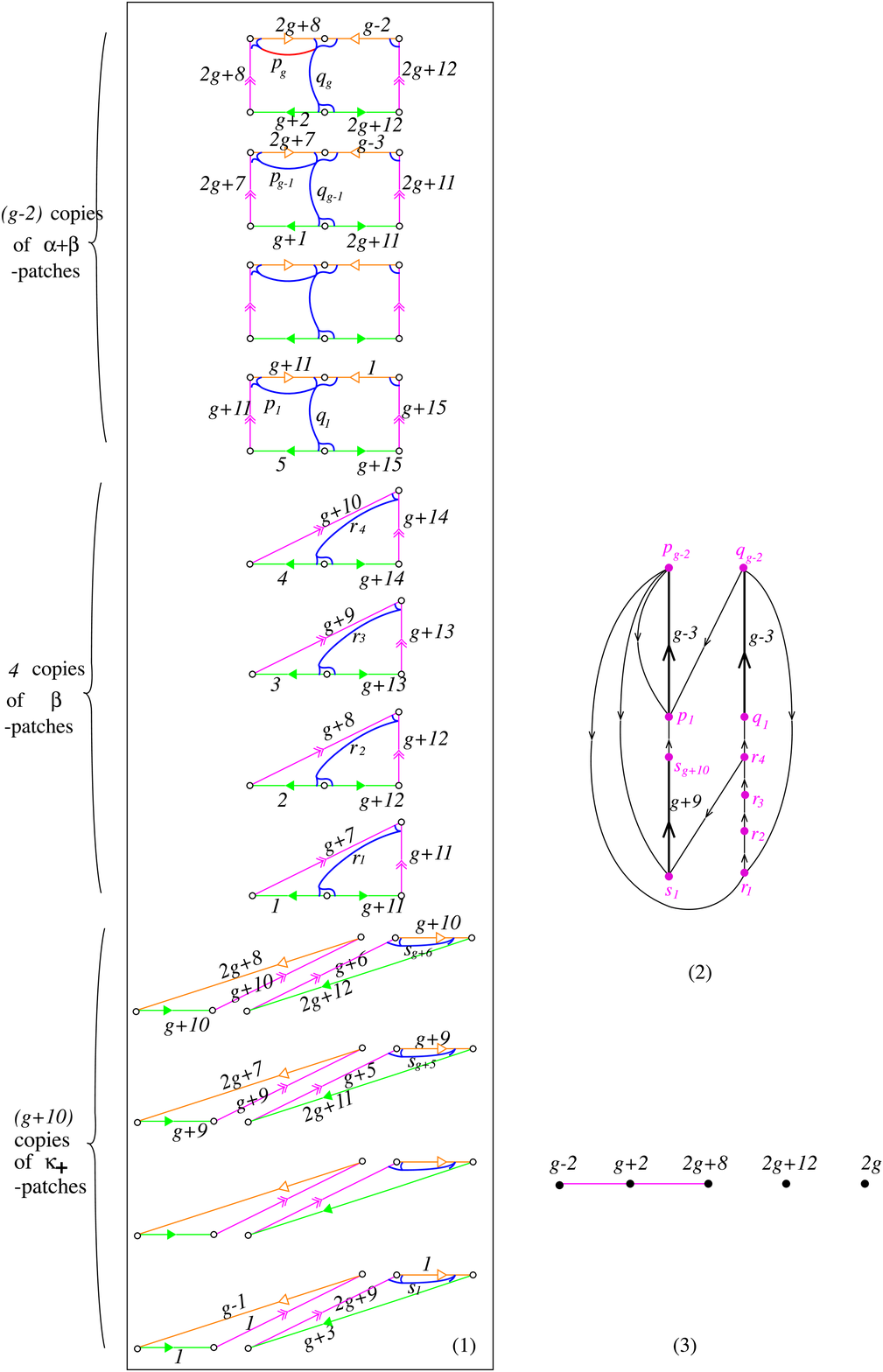}
\caption{Example~\ref{ex_ori15}. 
(1) $\tau_{a_g} \subset F_{a_g}$ 
(2) $\Gamma_{a_g}$ 
(3) $G_{a_g}$ for $a_g=(g+10,4,g-2)_+$.}
\label{fig_ex_ori15}
\end{figure}
\end{center}

\begin{center}
\begin{figure}
\includegraphics[width=6in]{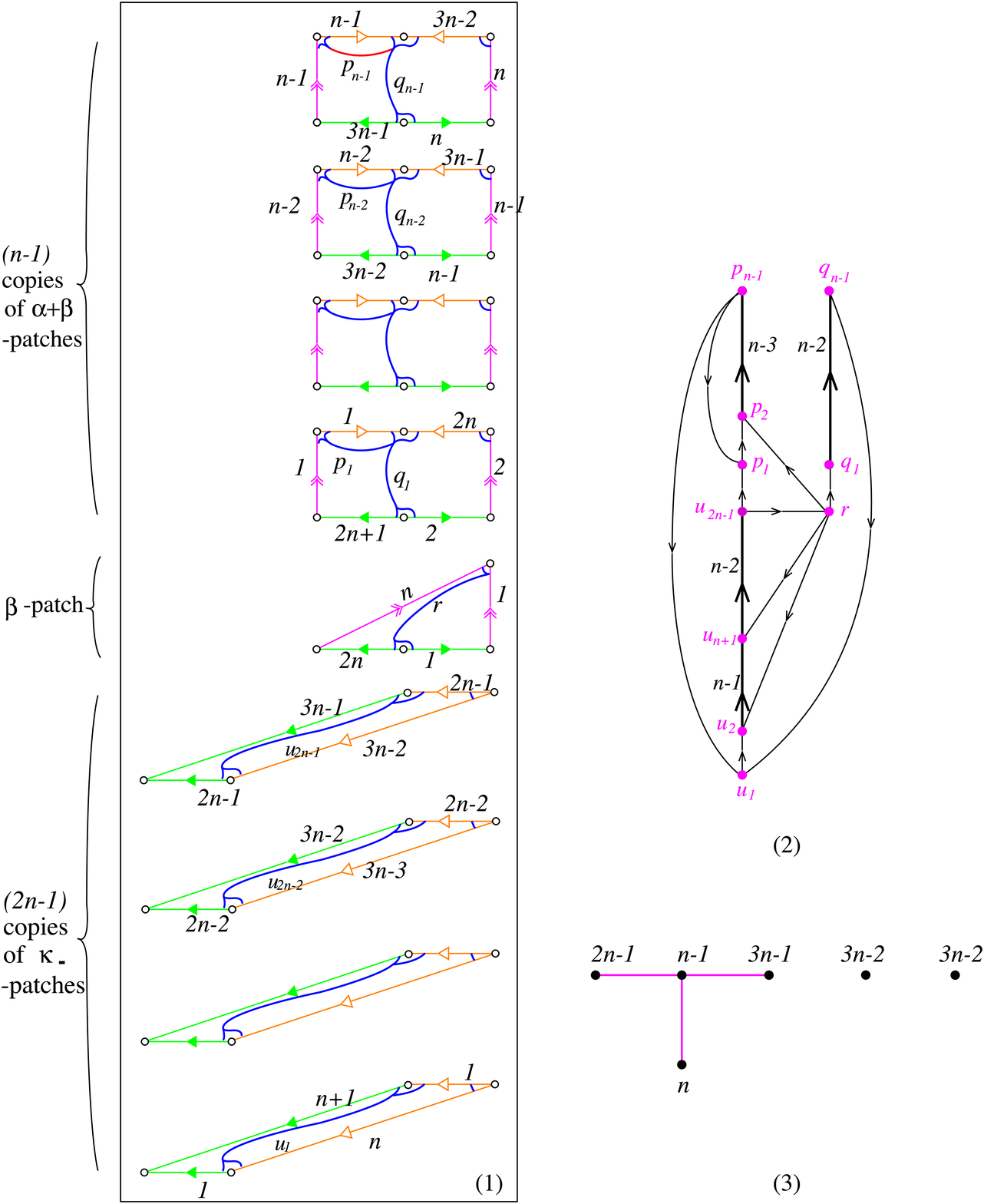}
\caption{Example~\ref{ex_whitehead}. 
(1) $\tau_{a_n} \subset F_{a_n}$ 
(2) $\Gamma_{a_n}$ 
(3) $G_{a_n}$ for $a_n= (2n-1, 1, n-1)_-$.}
\label{fig_ex_whitehead}
\end{figure}
\end{center}

\begin{center}
\begin{figure}
\includegraphics[width=3.5in]{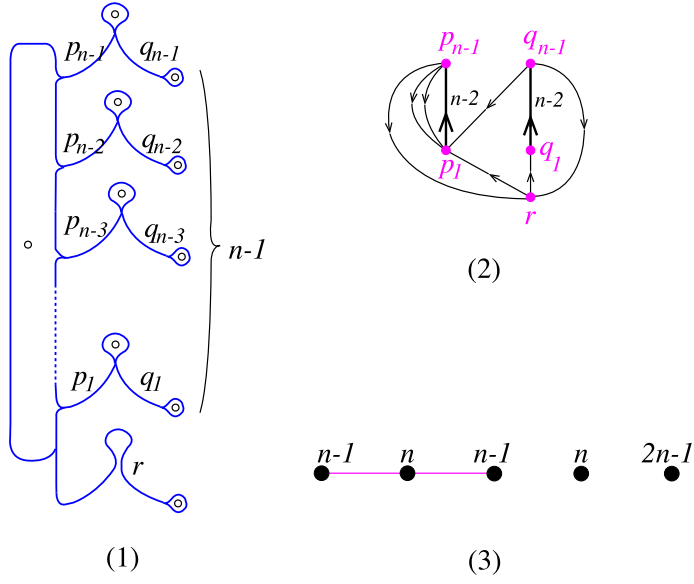}
\caption{Example~\ref{ex_braid1}. 
(1) $\tau_{a_n}$ (circles indicate components of $\partial F_{a_n}$) 
(2) $\Gamma_{a_n}$ 
(3) $G_{a_n}$ for $a_n= (1,n-1)_0$.}
\label{fig_ex_braid1}
\end{figure}
\end{center}

\begin{center}
\begin{figure}
\includegraphics[width=3.5in]{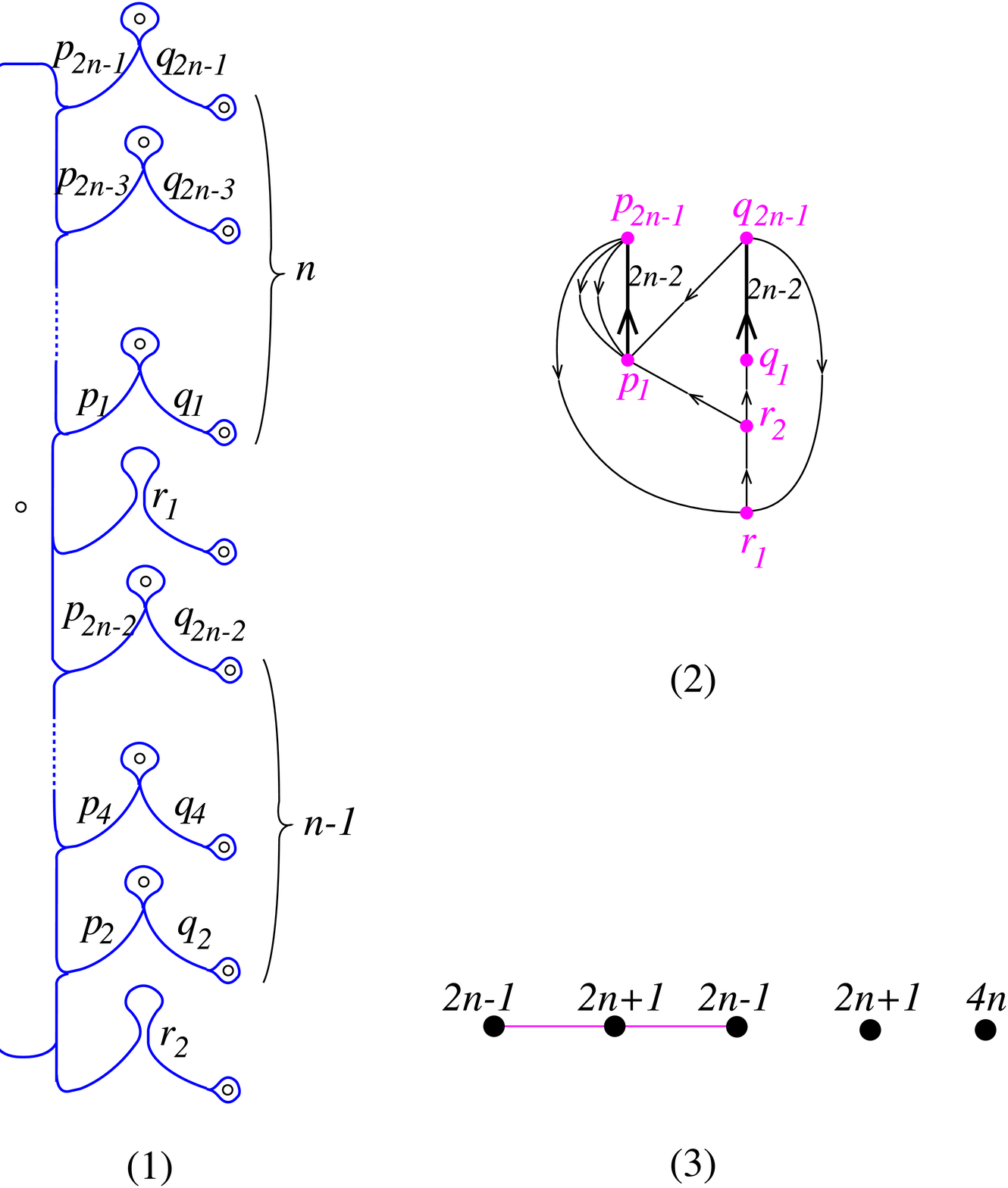}
\caption{Example~\ref{ex_braid2}. 
(1) $\tau_{a_n}$ 
(2) $\Gamma_{a_n}$ 
(3) $G_{a_n}$ for $a_n= (2, 2n-1)_0$.}
\label{fig_ex_braid2}
\end{figure}
\end{center}

\begin{center}
\begin{figure}
\includegraphics[width=5in]{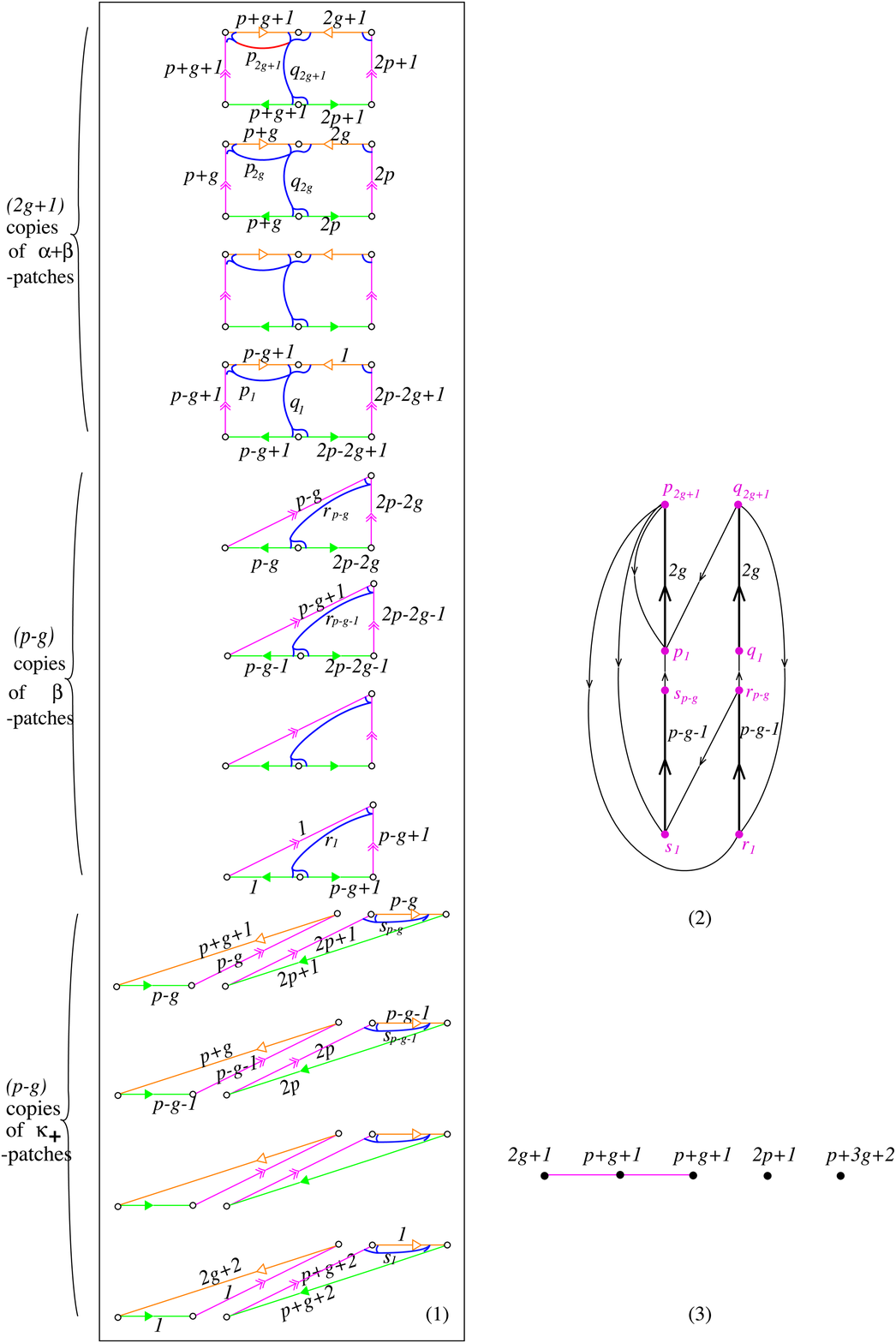}
\caption{Example~\ref{ex_Tsai}. 
(1) $\tau_{a_{(g,p)}} \subset F_{a_{(g,p)}}$ 
(2) $\Gamma_{a_{(g,p)}}$ 
(3) $G_{a_{(g,p)}}$ for $a_{(g,p)}= (p-g, p-g, 2g+1)_+$.}
\label{fig_ex_Tsai}
\end{figure}
\end{center}

\newpage

\end{document}